\newtheorem{theorem}{Theorem}
\title{Metrics and Uniqueness Criteria on the Signatures of Closed Curves}
\date{\today}
\author{Alex Kokot, Ian Klein}
\begin{document}

\maketitle

\section{Introduction}

Determining whether or not two planar curves are congruent under some group action is an
important problem in geometry and has applications to computer vision and image processing.
Calabi, Olver, Shakiban, Tannenbaum, and Haker \cite{Calabi} introduced the paradigm of the differential signature to address this problem. This idea has found applications in various applied problems including medical imaging and automated puzzle assembly [2–4, 8, 12]. 
\\
The origins of the methods go back to Cartan’s solution of the group equivalence problem for submanifolds under Lie group actions [6], however his methodology only provides a local invariant. The differential signature is defined to be the set $S:=\{(\kappa, \kappa_s)\}$, and it is meant to act as a kind of thumbprint, providing an identifier for whether two curves belong to the same equivalence class. The signature is particularly useful when considering closed curves as the set produced is independent of the choice of starting point, and it is often easy to compute $\kappa(t)$ for an arbitrary parameterization, and $\kappa_s(t)$ by the multivariable chain rule. While two congruent curves will always have the same signature, that is, $\Gamma, g\Gamma$ will produce the same signature set, the converse does not always hold. Musso and Nicolodi showed in \cite{Musso} that it is possible to insert sections of constant curvature to an initial curve to produce a 1-parameter family of non-congruent (not equivalent up to group action) curves that have the same signature.
\\
While one approach to limit this failure is to restrict to a particular class of initial curves called non-degenerate curves (see section 3), this paper will attempt to re-imagine the necessary conditions. We will provide a new criteria for uniqueness of the differential signature, paying particular attention to cases such as Euclidean and Affine group actions. We will also work to show when the signature is robust, that is, under what conditions small perturbations of the signature will lead to only small changes in the equivalence class of curves that produce this signature.

\subsection{A Review of Euclidean Curve Reconstruction}
We open by discussing Euclidean curvature, and its basic relationship to the curve it comes from. For this particular differential invariant, it is convenient not only to produce the curvature from a given curve, but also how one can go in the reverse direction; starting with a continuous parameterization of the curvature and finding the curve it corresponds to. We introduce here some basic definitions and formulas that will be assumed in the sequel.\\\\
\textbf{Definition:} The angle of inclination $\theta$ of a curve at a point $p$ (on the curve) is the angle between the $x$-axis and the tangent line at $p$.\\\\
\textbf{Definition:} Let $\gamma(s)$ be the arc length parameterization of a curve. At the point $\gamma(s)$, its euclidean curvature is
$$
\kappa(s) := \theta'(s)
$$
\textbf{Theorem:} Let $h:[a,b] \to \mathbb{R}$ be continuous. There exists a unique curve (up to Special Euclidean congruence) $\gamma$ for which $h(s)$ is the curvature function and $s$ is the arc length parameter.
\begin{proof}
Define $x,y,\theta$ to be functions such that
\begin{align*}
\frac{dx}{ds} &= \cos(\alpha(s)),\\ \frac{dy}{ds} &= \sin(\alpha(s)),\\ \frac{d\theta}{ds} &= h(s).
\end{align*}
We then get the unique solution (given initial conditions $\theta_0, x_0, y_0)$,
\begin{align*}
    \theta(t) &= \theta_0 + \int_0^t h(s)ds,\\
    x(t) &= x_0 + \int_0^t \cos(\alpha(s))ds,\\
    y(t) &= y_0 + \int_0^t \sin(\alpha(s))ds.
\end{align*}
We claim that $\gamma(s):= (x(s), y(s))$ is our desired curve. To do this, we begin by remarking the importance of $\sin$ and $\cos$ in the expression. Recall that $\gamma(s)$ is an arc length parameterization $\Longleftrightarrow$ $|\gamma'| \equiv 1$. Differentiating, we get this immediately as
$$
|\gamma'(s)| = \sqrt{\cos(\theta_s(s))^2 + \sin(\theta_s(s))^2} = 1.
$$
What remains is to show $|\gamma''|\equiv |h|$. Here again, our choice makes this rather easy as
$$
|\gamma''| = \sqrt{x''^2 + y''^2} = \sqrt{|\theta'|^2} = |\frac{d\theta}{ds}| = |h(s)|.
$$
Now, this shows us that $\gamma$ is a curve that works, but is it unique? By our construction, it has initial point $(x_0, y_0)$, and $\theta_0$ is the initial angle of inclination.Thus, if we translate and rotate two curves so that they have the same initial point and angle of inclination, they will have corresponding initial tangent vectors and initial points, so they must be mapped onto each other, thus they are congruent.
\end{proof}

\section{Open Curves}
\subsection{Introduction}

The general flow of our argument will be to show the result on robustness for curves with open (non-periodic) signatures, then use this to show uniqueness for open signatures as a corollary. This argument will not be enough for closed curves as their signatures are necessarily not open as they inherit periodicity from the curve they are derived from. We will extend this result to closed curves in the following section. We will start by considering the Euclidean case in isolation, then we will show how we can apply these results to the general case through methods such as Picard Iterations.

\subsection{Euclidean Signatures}
The Euclidean signature of a curve is the set $\{(\kappa(s),\kappa_s(s))\}$ where $\kappa$ is the Euclidean curvature of the curve and $\kappa_s$ is the derivative with respect to arc length. This set will be invariant to the equivalence class of a curve under the action by the special euclidean group, or $SE(2):= SO(2)\ltimes \mathbb{R}^2$, the rotations and translations of the plane $\mathbb{R}^2.$ \\
We start by introducing some notation that will be convenient to us throughout this paper. We say a \textit{phase portrait} is a set $\{(f(s), f'(s)):\ s\in I\}$ where $I$ is some interval in the domain of $f$. We then say that a parameterization $\sigma(s) = (u(s), v(s))$ is \textit{in phase} if $v(s) = u_s(s)$. Additionally, to distinguish that we are addressing phase-portrait, we will refer to the Cartesian plane as the $u,v$ plane, rather than the traditional $x,y$.\\

\includegraphics[width=0.8\columnwidth]{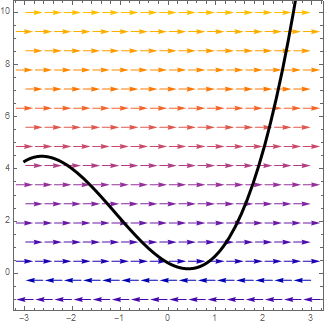}
\captionof{figure}{A curve plotted in the $u$-$v$ plane. We can think of this plane as the vector field $(y,0)$ as depicted above, where the lighter colored arrows correspond to a greater magnitude.}

\medskip

We also introduce specific notation to discuss signatures as phase portraits. A signature is, of course, defined by the curve it is derived from (and often vice versa). Thus we develop notation that relates these two objects. If $S$ is the signature of $\Gamma$, then we say that $\Gamma$ \textit{defines} $S$, or that it is the \textit{defining curve} of $S$. As we will specifically be going from the signature to $\Gamma$, we will sometimes use the notation $S_\Gamma$ in place of $S$ to indicate that $\Gamma$ defines $S_\Gamma$. We will reserve $\gamma(s)$ as a parameterization of $\Gamma$, while $\sigma(s)$ will be used for the in phase parameterization of $S_\Gamma$. \\

Interpreting a signature as a phase portrait gives us some advantages. We first observe that if $\sigma(s)$ is in phase, then $s$ is the arc length parameter of $\Gamma$. If $\sigma$ has domain $[0,L]$, then its defining curves have length $L$, and vice versa (shifting the domain of $\sigma$ appropriately). Also, as $\sigma(s) = (\kappa(s), \kappa_s(s))$, the $v$-coordinate $\kappa_s(s)$ gives us information on how quickly the $u$-coordinate $\kappa(s)$ is changing with respect to $s$.  In this document, we will typically refer to $\kappa_s$ as just simply $\kappa'$ as we will always take our signatures to be in phase parameterizations.\\
We now introduce the following set theoretical notation that will come up in the paper. Note that $| -|$ will always refer to the Euclidean metric in the appropriate dimension. If $S_1,S_2\subset \mathbb{R}^2$, then we define \textit{the distance between} $S_1,S_2$, or their \textit{Hausdorff} distance as
\begin{gather*}
d(p,S_1) = \inf_{q\in S_1} |p-q|\\
d(S_1,S_2) = \max\{\sup_{p\in S_1}d(p, S_2), \sup_{q\in S_2}d(q, S_1)\}.
\end{gather*}

\includegraphics[width=\columnwidth]{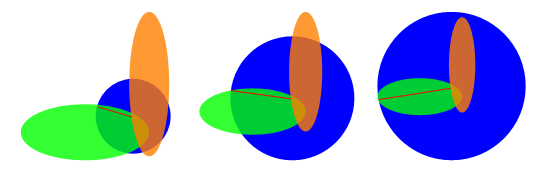}
\captionof{figure}{The Hausdorff distance between two ellipses restricted to the blue circle. As the circle expands, the length of the red line approaches their true Hausdorff distance. }

\medskip

We say that the $\delta$ \textit{neighborhood} of $S\subset \mathbb{R}^n$ is the set
$$
S_\delta := \{p \in \mathbb{R}^n:\ d(p,S)<\delta\}.
$$
Finally, we introduce our own notation for a particular neighborhood about open curves. Let $S$ be an open curve with parameterization $\sigma(s)=(u(s),v(s))$, and end points $s_\ell:=(u_\ell, v_\ell) = \sigma(0)$ and $s_r:=(u_r,v_r) = \sigma(L)$. Then we call the \textit{inner-tube of radius} $\delta$ about $S$ the set
{\footnotesize
$$
IT(S,\delta):=\{(u,v)\subset \mathbb{R}^2:\ \exists s,\ u = u(s),\ |v- v(s)| < \delta\}
$$
}
and the \textit{tube of radius} $\delta$ about $S$ the set

\begin{gather*}
T(S,\delta) := \\
\{p=(u,v)\in \mathbb{R}^2\:\ u<u_\ell,||p-s_\ell||_\infty<\delta\}\\
\cup \{p=(u,v)\in \mathbb{R}^2\:\ u>u_r,||p-s_r||_\infty<\delta\} \\
\cup IT(S,\delta).
\end{gather*}
where here we use the sup-norm $||(u,v)||_\infty = \max\{|u|,|v|\}$. You can think of these as a rectangle about the endpoints of height $2\delta$ and width $\delta$.

\includegraphics[width=\columnwidth]{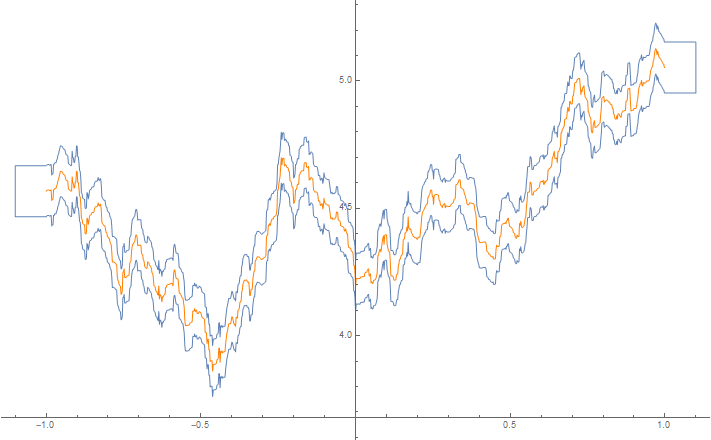}
\captionof{figure}{The tube neighborhood about a curve in $\mathbb{R}^2.$}

\medskip

The above figure illustrates how different the tube neighborhood can be from the $\delta$ neighborhood, as it is much more jagged than what we typically envision. The following lemma details how we can relate these distinct sets.
\newtheorem{lemma}{Lemma}
\begin{lemma}\label{tubetoUni}
If $S$ is the graph of a continuous function over a compact interval, then for all $\delta>0$, there exists $\delta^*>0$ such that $S_{\delta^*} \subset T(S,\delta)$.
\end{lemma}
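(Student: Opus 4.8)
The plan is to exploit the fact that $S$ being the graph of a continuous function $f$ over a compact interval forces $f$ to be \emph{uniformly} continuous, and then to feed the modulus of continuity of $f$ into the choice of $\delta^*$. I would write $S = \{(u, f(u)) : u \in [u_\ell, u_r]\}$ with $u_\ell < u_r$ (relabeling endpoints if necessary), so that the projection of $S$ onto the $u$-axis is exactly $[u_\ell, u_r]$ and $s_\ell = (u_\ell, f(u_\ell))$, $s_r = (u_r, f(u_r))$. Given $\delta > 0$, I would first use uniform continuity to pick $\eta > 0$ with $|f(u) - f(u')| < \delta/2$ whenever $|u - u'| \le \eta$, and then set $\delta^* := \min\{\eta, \delta/2\}$.

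Next I take an arbitrary point $p = (u_0, v_0) \in S_{\delta^*}$ and unpack the definition of the Euclidean neighborhood: there is a point $q = (u_1, f(u_1)) \in S$ with $|p - q| < \delta^*$, hence both $|u_0 - u_1| < \delta^*$ and $|v_0 - f(u_1)| < \delta^*$. The argument then branches on where the horizontal coordinate $u_0$ lands relative to $[u_\ell, u_r]$.

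If $u_0 \in [u_\ell, u_r]$, then $f(u_0)$ is defined and I estimate $|v_0 - f(u_0)| \le |v_0 - f(u_1)| + |f(u_1) - f(u_0)| < \delta^* + \delta/2 \le \delta$, using $|u_1 - u_0| < \delta^* \le \eta$; this places $p$ in the inner-tube $IT(S,\delta)$. If instead $u_0 < u_\ell$, then because $u_1 \ge u_\ell > u_0$ and $u_1 < u_0 + \delta^*$, I get $|u_0 - u_\ell| < \delta^*$ and $u_1 \in [u_\ell, u_\ell + \delta^*)$; combined with $|v_0 - f(u_1)| < \delta^*$ and $|f(u_1) - f(u_\ell)| < \delta/2$, this yields $\|p - s_\ell\|_\infty < \delta$, so $p$ lies in the left cap of $T(S,\delta)$. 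The case $u_0 > u_r$ is symmetric and puts $p$ in the right cap. In every case $p \in T(S, \delta)$, which gives the desired containment.

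I expect the main obstacle to be the endpoint analysis rather than the interior estimate: the inner-tube consists only of vertical slices over $u$-values actually attained by $S$, so points of $S_{\delta^*}$ whose $u$-coordinate overshoots the ends of $[u_\ell, u_r]$ are not captured by $IT(S,\delta)$ at all, and one must verify they fall into the rectangular caps — this is precisely the role of the $u < u_\ell$ and $u > u_r$ pieces in the definition of $T$. The reason uniform continuity (and not mere pointwise continuity) is essential is that a single $\delta^*$ must work simultaneously for every point of $S_{\delta^*}$, so the control on $|f(u) - f(u')|$ must hold uniformly over the whole compact interval.
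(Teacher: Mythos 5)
Your proof is correct, but it takes a different route from the paper's. The paper gives a soft compactness argument: it observes that the graph condition makes $T(S,\delta)$ a neighborhood of $S$, sets $C := \mathbb{R}^2\setminus T(S,\delta)$, and uses continuity of $x \mapsto d(x,C)$ together with compactness of $S$ to conclude that $m := \min_{x\in S} d(x,C) > 0$, then takes $\delta^* := m$. That argument is shorter and would work verbatim for any compact set inside any neighborhood, but it is non-constructive ($\delta^*$ is defined as a minimum whose value is not exhibited), and it leans on the unproved assertion that $T(S,\delta)$ is in fact a neighborhood of $S$. Your argument instead extracts a modulus of continuity $\eta$ from uniform continuity of $f$, sets $\delta^* := \min\{\eta,\delta/2\}$ explicitly, and verifies by case analysis on the $u$-coordinate that every point of $S_{\delta^*}$ lands either in the inner tube or in one of the endpoint caps. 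This buys two things: an explicit, computable $\delta^*$ (in the spirit of the paper's later explicit bounds, e.g.\ Corollary \ref{ExpBound}), and a direct verification of precisely the point the paper glosses over --- that the rectangular caps absorb the points whose $u$-coordinate overshoots $[u_\ell,u_r]$, which is where the neighborhood property of $T(S,\delta)$ actually gets used. The trade-off is that your proof is tied to the graph structure, while the paper's generalizes immediately.
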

\begin{proof}
We first discuss why we include the first part of the claim. To say that $\sigma(s):=(u(s), v(s))$ is the graph of a function, it is only necessary that $u(s)$ be an increasing function. This is a sufficient and necessary condition for $T(S,\delta)$ to be a neighborhood of $S$. It is well known that for any closed set $C$, the function $d(x,C)$ as defined above is continuous in $x$. Thus, defining $C:= \mathbb{R}^2\setminus T(S,\delta)$, we see that $m:=\min_{x\in S}d(x,C)$ is achieved at some point $p\in S$. We also have that $d(p,C)>0$ as $T(S,\delta)$ is a neighborhood about $S$. Thus we see that if we take $\delta^*:=m$, then $S_{\delta^*}\subseteq T(S,\delta)$ as desired.
\end{proof}
Because
$$
S_{\delta^*}\subset T(S,\delta).
$$
then if we take $d(S,S')<\delta^*$, we will have
$$
S' \subset S_{\delta^*} \subset T(S,\delta).
$$
Thus if we can prove something about the signatures that are in $T(S_\Gamma,\delta)$, we have shown this property holds for signatures that are within $\delta^*$ of $S_\Gamma$ by the Hausdorff metric. Our argument for the following is much more succinct in the former of the two options, so that is where we keep most of our attention. We aim to prove the following.

\begin{theorem}\label{EuclideanMainTheor}
{
Let $S_\Gamma$ be a signature defined by $\Gamma$, where $\Gamma$ has no vertices and has arc length parameterization $\gamma:[0,L]\to \mathbb{R}^2.$ For all $\varepsilon>0$, there exists $\delta>0$ such that for any $\Gamma^*$ with signature $S_{\Gamma^*}$, if $d(S_{\Gamma^*}, S_\Gamma)<\delta$ then $d(\Gamma,g\Gamma^*)<\varepsilon$ for some $g\in SE(2)$.
}
\end{theorem}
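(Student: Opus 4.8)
The plan is to convert Hausdorff closeness of the two signature \emph{sets} into sup-norm closeness of the two curvature \emph{functions} of arc length, and then feed this into the continuous dependence already built into the reconstruction theorem proved above. First I would reduce to the tube neighborhood: given $\varepsilon$, I will produce a $\delta$ that works for signatures lying in $T(S_\Gamma,\delta)$, and then Lemma~\ref{tubetoUni} supplies a $\delta^*$ so that $d(S_{\Gamma^*},S_\Gamma)<\delta^*$ forces $S_{\Gamma^*}\subset (S_\Gamma)_{\delta^*}\subset T(S_\Gamma,\delta)$. To apply the lemma I need $S_\Gamma$ to be a graph, and this is exactly where the no-vertices hypothesis enters: $\Gamma$ having no vertices means $\kappa'=v$ never vanishes on $[0,L]$, so by compactness $|v|\ge c>0$. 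Since $v$ is continuous and nonvanishing its sign is constant, hence $\kappa$ is strictly monotone and $S_\Gamma$ is the graph of a function $v=V(u)$ over a compact interval $[u_\ell,u_r]$, with $|V|\ge c$.

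The key device is to reparametrize by curvature rather than arc length. Since $\sigma$ is in phase, $\tfrac{du}{ds}=v=V(u)$ with $|V|\ge c$, so arc length is recovered from the signature itself by
$$ s(u)=\int_{u_\ell}^{u}\frac{dw}{V(w)}, $$
the integrand having constant sign. For a competitor $\Gamma^*$ with $S_{\Gamma^*}\subset T(S_\Gamma,\delta)$ and $\delta<c$, the inner-tube condition forces its $v$-coordinate to satisfy $|v^*|\ge c-\delta>0$ with the same sign as $V$, so $\kappa^*$ is also strictly monotone and $S_{\Gamma^*}$ is itself a graph $v=V^*(u)$ with $|V^*(u)-V(u)|<\delta$ on the overlapping range, its $u$-interval differing from $[u_\ell,u_r]$ by at most $\delta$ (the discrepancy living in the rectangular caps of the tube). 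This forced monotonicity is what rules out the rapidly oscillating, length-inflating perturbations of the Musso--Nicolodi type: the reparametrized length $L^*=\int dw/V^*$ is pinned to within $O(\delta)$ of $L$.

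From here the estimates are routine. Using $|V|,|V^*|\ge c-\delta$ together with $|V-V^*|<\delta$ in the formula for $s(u)$, I would first bound $|L^*-L|=O(\delta)$, then show that after matching the two curves at a common initial curvature the functions $\kappa(s)$ and $\kappa^*(s)$ agree to within $O(\delta)$ in sup norm on the common domain (the leftover sliver of length $O(\delta)$ contributing negligibly to the final distance). Feeding these close curvature functions into the reconstruction formulas $\theta(s)=\theta_0+\int_0^s h$, $x=x_0+\int\cos\theta$, $y=y_0+\int\sin\theta$, and using that $\cos$ and $\sin$ are $1$-Lipschitz, the triangle inequality propagates the $O(\delta)$ curvature error to an $O(\delta)$ bound on $\sup_s|\gamma(s)-g\gamma^*(s)|$ for the congruence $g\in SE(2)$ aligning initial point and initial tangent. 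Choosing $\delta$ so that every $O(\delta)$ term falls below $\varepsilon$ then finishes the argument.

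The main obstacle I anticipate is precisely this first conversion step: Hausdorff closeness of the signature sets carries no parametrization data, whereas the reconstruction theorem is a statement about curvature as a function of arc length. The no-vertices hypothesis is the crux that resolves it, since it makes the signature a monotone graph and lets arc length be recovered stably as $\int du/v$ with $v$ bounded away from zero, turning set-closeness into function-closeness. The remaining delicate points are bookkeeping: controlling the behavior in the caps at the endpoints, where the $u$-ranges of the two signatures need not coincide, and matching the two curves' lengths before comparing them, both absorbed into the $O(\delta)$ slack above.
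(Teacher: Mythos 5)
Your proposal is correct, and its global architecture matches the paper's: reduce Hausdorff closeness to tube containment via Lemma \ref{tubetoUni} (justifying the graph hypothesis by the no-vertices condition), convert set-closeness of the signatures into sup-norm closeness of the curvature functions of arc length together with closeness of the lengths, and then push this through the reconstruction integral $\gamma(s)-\gamma(0)=\int_0^s e^{i\theta(t)}dt$ with Lipschitz estimates, exactly as in Lemmas \ref{closeCurva}, \ref{wiggleRoom} and \ref{uncommonDomain}, absorbing the uncommon slivers of domain by the $1$-Lipschitz property of arc-length parameterizations. Where you genuinely diverge is in the proof of the key conversion step (the paper's Proposition 2, i.e.\ Lemma \ref{generallyUseful}): the paper constructs in-phase reparameterizations $\rho^{\pm}$ of the top and bottom curves of the tube by solving a separable ODE (Lemma \ref{rho}), sandwiches the competitor's curvature, $\kappa^-\le \kappa^*\le \kappa^+$ (Lemma \ref{BottomTop}), and then dominates everything by the explicit envelopes $\tau,\beta$ (Lemmas \ref{Ltau}, \ref{tauBeta}); you instead view both signatures as graphs $v=V(u)$, $v=V^*(u)$ over the curvature axis, recover arc length from the signature itself as $s(u)=\int du/V$, and compare the two integrals directly via the Lipschitz constant $m^{-2}$ of $x\mapsto 1/x$, inverting at the end using $|\kappa'|\le M$. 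This inverse-function comparison is essentially the device the paper itself introduces only later, in the $L^1$-metric section (Corollary \ref{areaDist}); since $|V-V^*|<\delta$ on a $u$-interval of bounded length implies the corresponding $L^1$ bound, your route is legitimate and somewhat more economical, avoiding the ODE existence arguments for $\rho^\pm$ and the two-layer envelope bookkeeping. What the paper's sandwich buys in exchange is that $\tau$ and $\beta$ are explicit reparameterized copies of $\kappa$ itself, which is what feeds the fully explicit constants of Corollary \ref{ExpBound}; your constants ($M/m^2$, $1/(c-\delta)$, etc.) deliver the same $O(\delta)$ rate but in a different form. Two points you should make explicit when writing this up: the coverage direction of the Hausdorff hypothesis (not tube containment alone) is what guarantees the competitor's $u$-range reaches to within $O(\delta)$ of both endpoints $u_\ell,u_r$, and the upper bound $|V^*|\le M+\delta$ (from tube containment) is needed for the final inversion from closeness of $s(u),s^*(u)$ to closeness of $\kappa(s),\kappa^*(s)$.
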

 
 This will be an immediate consequence of the following two propositions, which we devote the next two sections to proving.
 
 \newtheorem{proposition}{Proposition}
 
 \begin{proposition}
Let $\kappa(s):[0,L]\to \mathbb{R},\kappa^*(s):[0,L^*]\to \mathbb{R}$ be curvature functions parameterized with respect to the arc length of their defining curves $\Gamma,\Gamma^*$. Then for all $\varepsilon>0$, there exists $\delta>0$ so that if $|\kappa(s) - \kappa^*(s)| < \delta$ for all $s\in [0,\min\{L,L^*\}]$  and $|L-L^*|<\delta$ then there exists $g\in SE(2)$ such that $d(\Gamma, g\Gamma^*)<\varepsilon$. 
 \end{proposition}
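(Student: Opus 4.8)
The plan is to reconstruct both curves from their curvature functions using the integral formulas from the Euclidean reconstruction theorem above, and then to propagate the closeness of the curvatures through the two successive integrations. First I would use the freedom in $g\in SE(2)$ to normalize initial conditions. By the uniqueness part of the reconstruction theorem, each of $\Gamma,\Gamma^*$ is determined up to $SE(2)$ by its curvature together with a choice of initial point and initial angle of inclination; since $SE(2)$ has exactly three degrees of freedom (two translational, one rotational), I may choose $g$ so that $g\Gamma^*$ has the same initial point and initial tangent direction as $\Gamma$, i.e. $x_0=x_0^*$, $y_0=y_0^*$, $\theta_0=\theta_0^*$. After this normalization both curves are given by
\begin{align*}
\theta(t) &= \theta_0 + \int_0^t \kappa(s)\,ds, & x(t) &= x_0 + \int_0^t \cos\theta(s)\,ds, & y(t) &= y_0 + \int_0^t \sin\theta(s)\,ds,
\end{align*}
and analogously for the starred quantities built from $\kappa^*$.

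Next I would carry out the error estimates on the common domain $[0,\min\{L,L^*\}]$. Integrating the curvature bound gives $|\theta(t)-\theta^*(t)| \le \int_0^t|\kappa-\kappa^*|\,ds < L\delta$. Since $\cos$ and $\sin$ are $1$-Lipschitz, the position errors satisfy $|x(t)-x^*(t)| \le \int_0^t |\theta(s)-\theta^*(s)|\,ds \le L^2\delta$, and likewise for $y$, so that $|\gamma(t)-g\gamma^*(t)| \le C(L)\delta$ for a constant $C(L)$ depending only on the fixed length $L$. This already controls the distance from any point of $\Gamma$ in the common range to $g\Gamma^*$, and symmetrically the distance from any such point of $g\Gamma^*$ to $\Gamma$.

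The one genuinely separate point is the mismatched tails caused by $L\neq L^*$. Assuming without loss of generality $L^*<L$, the portion $\gamma([L^*,L])$ has no counterpart on $g\Gamma^*$. Here I would invoke that $\gamma$ is an arc length parameterization, hence $1$-Lipschitz, so $|\gamma(t)-\gamma(L^*)| \le t-L^* < \delta$ for $t\in[L^*,L]$, while $\gamma(L^*)$ is within $C(L)\delta$ of the endpoint $g\gamma^*(L^*)\in g\Gamma^*$. Thus every tail point lies within $(C(L)+1)\delta$ of $g\Gamma^*$ as well. Combining the common-range and tail estimates bounds both one-sided suprema defining the Hausdorff distance, giving $d(\Gamma,g\Gamma^*) \le (C(L)+1)\delta$; choosing $\delta < \varepsilon/(C(L)+1)$ completes the argument.

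I expect the main obstacle to be the bookkeeping: tracking how the curvature error compounds through the two integrations while simultaneously passing from the pointwise bound on the parameterized curves to the Hausdorff distance between their \emph{images}, and in particular dispatching the tail arising from the length mismatch cleanly in both directions of the Hausdorff supremum. The Lipschitz property of the arc length parameterization is precisely what resolves that tail, so the argument hinges on keeping that observation in view throughout.
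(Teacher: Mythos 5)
Your proof is correct and takes essentially the same approach as the paper: the paper likewise normalizes the initial point and tangent with $g$, bounds $|\gamma(t)-g\gamma^*(t)|$ on the common domain by integrating the curvature error twice (using the $1$-Lipschitz map $\theta\mapsto e^{i\theta}$ in place of your separate $\cos$/$\sin$ estimates, in Lemma \ref{closeCurva}), and dispatches the mismatched tail exactly as you do, via the $1$-Lipschitz property of arc-length parameterizations (Lemmas \ref{wiggleRoom} and \ref{uncommonDomain}). The only cosmetic difference is that the paper states the tail argument for a general common subinterval $I\subset[0,L]$, $I^*\subset[0,L^*]$, whereas your version with $I=I^*=[0,\min\{L,L^*\}]$ is the special case actually needed here.
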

 
 \begin{proposition}
 For any phase portrait $(f(s),f'(s))$, $s\in [0,L]$ and $\varepsilon>0$,
there exists $\delta>0$ such that whenever another phase portrait
 $(g(s),g'(s))$, $s\in[0,L^*]$ has the property $d(S^*,S)<\delta$, we have $|L-L^*|<\varepsilon$ and for $s\in [0,\min\{L,L^*\}]$, $|f(s)-g(s)|<\varepsilon$.
 \end{proposition}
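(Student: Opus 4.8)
The plan is to reconstruct each in-phase parameterization from its underlying point set and then to show that Hausdorff closeness of the two sets forces closeness of these reconstructions. The mechanism is the identity $\frac{ds}{du}=\frac1v$, valid along any in-phase curve wherever $v\neq0$: since $u_s=v$, the arc-length parameter is recovered from the geometry alone by integrating $\frac1v$ against $du$. First I would put $S$ into a normal form. Under the no-vertex hypothesis of \Cref{EuclideanMainTheor} the function $v=f'$ never vanishes, and since it is continuous on the connected interval $[0,L]$ it is of one sign; taking $v>0$ without loss of generality, $f$ is strictly increasing and $S$ is the graph of a continuous function $v=V(u)$ over $[u_\ell,u_r]=[f(0),f(L)]$, with $0<c\le V\le M$ by compactness. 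In these terms $s(u)=\int_{u_\ell}^{u}\frac{dt}{V(t)}$, $L=\int_{u_\ell}^{u_r}\frac{dt}{V(t)}$, and $f=s^{-1}$, and the analogous formulas define $g$ from $V^*$.

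The first substantive step is to force $S^*$ into the same normal form. Choosing $\delta<\tfrac c2$, every point of $S^*$ lies within $\delta$ of $S$ and so has $v$-coordinate exceeding $c-\delta>0$; hence $g'>0$, $g$ is strictly increasing, and $S^*$ is itself a graph $v=V^*(u)$ over some $[u_\ell^*,u_r^*]$ with $V^*\ge\tfrac c2$. Comparing leftmost and rightmost points of the two graphs gives $|u_\ell-u_\ell^*|\le\delta$ and $|u_r-u_r^*|\le\delta$. The key estimate is pointwise closeness of the profiles: for $u$ in the common range, $(u,V^*(u))\in S^*$ lies within $\delta$ of some $(u',V(u'))\in S$, so $|u-u'|<\delta$ and $|V^*(u)-V(u')|<\delta$, and the triangle inequality with the uniform modulus of continuity $\omega_V$ of the fixed profile yields $|V^*(u)-V(u)|<\delta+\omega_V(\delta)=:\eta(\delta)$, with $\eta(\delta)\to0$. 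I would emphasize that this bound invokes the continuity of $V$, not of $V^*$, which is precisely what lets the estimate tolerate an a priori steep or oscillatory $V^*$.

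It then remains to propagate these estimates through the integral formulas. Since $V,V^*\ge\tfrac c2$, we have $|\frac1V-\frac1{V^*}|\le 4\eta(\delta)/c^2$ on the common range, while the two endpoint slivers have length at most $\delta$ and there $\frac1V,\frac1{V^*}\le\frac2c$; summing gives a bound on $|L-L^*|$ that tends to $0$ with $\delta$, hence lies below $\varepsilon$ once $\delta$ is small. The same estimates show $s(u)$ and $s^*(u)$ are uniformly close, say within $\rho(\delta)\to0$. Finally, because $s$ and $s^*$ increase at rate at least $1/M$ their inverses $f,g$ are Lipschitz with constant at most $M+\eta(\delta)$; writing $u=f(s)$ and using $g(s^*(u))=u$ gives $|f(s)-g(s)|=|g(s^*(u))-g(s)|\le (M+\eta(\delta))\,\rho(\delta)<\varepsilon$ on $[0,\min\{L,L^*\}]$, up to routine boundary adjustments.

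The genuinely delicate point, and the main obstacle, is the degenerate case where $v$ vanishes at a turning point. There $\frac1V$ is no longer bounded and the graph-over-$u$ normal form breaks down, so the reconstruction loses its stability; in fact it can fail outright, since a constant-curvature arc has the single-point phase portrait $\{(c,0)\}$ irrespective of its length, making $L$ undetermined by $S$. This is exactly the Musso--Nicolodi degeneracy, and it shows that some non-degeneracy hypothesis---here the absence of vertices inherited from \Cref{EuclideanMainTheor}---is necessary for the conclusion. I would therefore carry out the argument under that hypothesis and treat the turning-point analysis separately only if full generality were required.
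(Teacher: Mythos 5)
Your proof is correct (modulo the endpoint adjustments you explicitly defer as routine), and you are right to import the no-vertex hypothesis from Theorem \ref{EuclideanMainTheor}: without $f'\neq 0$ the statement is false, exactly for the constant-curvature reason you give, and the paper does the same thing, since its working version of this proposition, Lemma \ref{generallyUseful}, carries the hypothesis $f'\neq 0$ explicitly. Your route, however, is genuinely different from the paper's. Both arguments rest on the same reconstruction identity (the paper's Lemma \ref{sigDEQ}): the parameter is recovered from the graph $v=V(u)$ by $s(u)=\int du/V$. But the paper wraps this identity in the tube machinery: it constructs in-phase parameterizations $\rho^\pm$ of the raised and lowered curves $\sigma^\pm$ of $T(S,\delta)$ by solving $(\rho^\pm)'=1\pm\delta/\kappa'(\rho^\pm)$ (Lemma \ref{rho}), proves the squeeze $\kappa(\rho^-(s))\le\kappa^*(s)\le\kappa(\rho^+(s))$ (Lemma \ref{BottomTop}), replaces $\rho^\pm$ by the linear comparison functions $\beta,\tau$ (Lemmas \ref{Ltau} and \ref{tauBeta}), and finishes with the Lipschitz constant $M=\max\kappa'$ of $\kappa$ in $s$; the bound on $|L-L^*|$ requires a further piecewise extension of $\beta$. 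You bypass all of this: from the Hausdorff hypothesis and the uniform continuity of the fixed profile $V$ alone you obtain the pointwise bound $|V^*(u)-V(u)|\le\delta+\omega_V(\delta)$ --- in effect a quantitative version of Lemma \ref{tubetoUni}, so you never need the separate containment hypothesis $S^*\subset T(S,\delta)$ that the paper assumes --- and then every conclusion follows by integrating $|1/V-1/V^*|$ and inverting with a Lipschitz bound. Your version is shorter, needs no ODE existence step, and places the regularity burden entirely on $S$ rather than on $S^*$. What the paper's heavier construction buys is rates and reuse: its comparison curves yield the explicit $O(\delta)$ estimate of Corollary \ref{ExpBound}, whereas your bound degrades to $O(\delta+\omega_V(\delta))$ unless the profile $V$ is Lipschitz in $u$, and the objects $\rho^\pm,\tau,\beta$ are recycled in the paper's later arguments.
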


\subsubsection{Criteria for Curvatures}

We first approach this problem by looking at curvatures, and from there we will attempt to reduce the statement on signatures to a statement about their curvatures.

\begin{lemma}\label{closeCurva}
Let $\kappa:[0,L]\to\mathbb{R},\kappa^*:[0,L]\to \mathbb{R}$ be curvature functions parameterized with respect to arc length. For all $\varepsilon>0$, there exists $\delta>0$ where if $|\kappa(s) - \kappa^*(s)| < \delta$ for all $s\in [0,L]$, then $d(\Gamma, g\Gamma^*)<\varepsilon$ for some $g\in SE(2)$. 
\end{lemma}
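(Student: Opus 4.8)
The plan is to reconstruct both curves explicitly from their curvature functions via the integral formulas established in the reconstruction theorem, and then track how a uniform error in curvature propagates — first into the tangent angle, and then into position. First I would invoke the uniqueness part of that theorem to fix a convenient representative: apply a motion $g\in SE(2)$ to $\Gamma^*$ so that its reconstructed curve shares the same initial point and the same initial angle of inclination as $\Gamma$. Writing $\theta(s)=\theta_0+\int_0^s \kappa$ and $\theta^*(s)=\theta_0+\int_0^s \kappa^*$, the matched initial angles give
\[
|\theta(s)-\theta^*(s)| = \left|\int_0^s (\kappa-\kappa^*)\right| \le \int_0^s |\kappa-\kappa^*| \le \delta L
\]
for every $s\in[0,L]$, since the integrand is bounded by $\delta$ and the interval has length at most $L$.

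Next I would push this angular estimate through to the positions. Since $\gamma(s)=(x_0,y_0)+\int_0^s(\cos\theta,\sin\theta)$ and likewise for $g\gamma^*$, and since $\sin$ and $\cos$ are $1$-Lipschitz, each coordinate difference is controlled by
\[
|x(s)-x^*(s)| \le \int_0^s |\cos\theta-\cos\theta^*| \le \int_0^s |\theta-\theta^*| \le \delta L^2,
\]
and identically for the $y$-coordinate. Combining the two coordinates yields the uniform bound $|\gamma(s)-g\gamma^*(s)|\le \sqrt{2}\,\delta L^2$ for all $s\in[0,L]$.

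Finally I would convert this pointwise bound into the desired Hausdorff bound. Because both curves are arc-length parameterized on the common domain $[0,L]$, the correspondence $\gamma(s)\mapsto g\gamma^*(s)$ matches each point of one curve to a point of the other at distance at most $\sqrt{2}\,\delta L^2$; hence every point of either set lies within that distance of the other set, so $d(\Gamma,g\Gamma^*)\le \sqrt{2}\,\delta L^2$. Choosing $\delta:=\varepsilon/(\sqrt{2}\,L^2+1)$ then forces $d(\Gamma,g\Gamma^*)<\varepsilon$, completing the argument.

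I expect the only genuinely delicate point to be the compounding of the error: the curvature discrepancy is integrated twice — once to produce the angle and once to produce the position — so the final displacement scales like $\delta L^2$ rather than $\delta$, and it is essential that the interval length $L$ is fixed and finite for the constant $\sqrt{2}\,L^2$ to be meaningful. The alignment step also deserves care, as it relies on the reconstruction theorem guaranteeing that fixing the initial point and initial tangent direction pins down a single representative of the $SE(2)$-orbit, which is what makes the term-by-term comparison of $\gamma$ with $g\gamma^*$ legitimate.
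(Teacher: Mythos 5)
Your proposal is correct and follows essentially the same argument as the paper: fix $g\in SE(2)$ so the initial points and tangent directions agree, integrate the curvature discrepancy once to control the angle $\theta-\theta^*$ and a second time (via the $1$-Lipschitz property of $\cos$, $\sin$ --- the paper packages this as $e^{i\theta}$) to control position, then pass from the uniform pointwise bound to the Hausdorff bound. The only difference is bookkeeping: the paper keeps the iterated integral to obtain the constant $L^2/2$, whereas your uniform bound $|\theta-\theta^*|\le \delta L$ yields the slightly cruder $\sqrt{2}\,\delta L^2$, which is immaterial for the $\varepsilon$--$\delta$ conclusion.
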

\begin{proof}
Apply $g$ so that $\Gamma,g\Gamma^*$ have the same initial point and unit tangent vector. If $\gamma(s),\gamma^*(s)$ are the corresponding arc length parameterizations, it is well established that for $\theta_s(s):= \kappa(s)$, we have $\gamma(s) - \gamma(0) = \int_0^s e^{i\theta(s)}dt$, and likewise for its counterpart $\gamma^*$. Thus
\begin{gather*}
    |\gamma(t) - \gamma^*(t)|\\
    =|\int_0^s e^{i\theta(t)}-e^{i\theta^*(t)}dt|\\
    \intertext{As $e^{i\theta}$ is Lipschitz with Lipschitz constant 1,}
    \leq \int_0^s |\theta(t) - \theta^*(t)| dt\\
    \leq \int_0^s |\int_0^t \kappa(h)dh - \int_0^t \kappa^*(h)dh| dt\\
    \leq \int_0^s\int_0^t |\kappa(h) - \kappa^*(h)|dhdt\\
    \leq \int_0^st \delta dt = s^2\delta\leq L^2 \delta/2.
\end{gather*}
Thus if we take $\delta<2\varepsilon/L^2$, the result follows.
\end{proof}
We now want to prove a more general formulation of Proposition 1. The phrasing here is more intimidating, but what it asserts is that it is sufficient that the common domains just take up a sufficiently large portion of the domains of $\kappa,\kappa^*$. The reason that this is sufficient comes from the following lemma.
\begin{lemma}\label{wiggleRoom}
Let $\gamma$ be parameterized with respect to arc length. Then $|\gamma(a) - \gamma(b)|\leq |a-b|.$
\end{lemma}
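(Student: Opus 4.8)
The plan is to exploit the defining property of an arc length parameterization, namely that $|\gamma'(s)| = 1$ wherever the derivative exists, which immediately makes $\gamma$ a $1$-Lipschitz map. First I would reduce to the case $a \le b$ without loss of generality, since both sides of the claimed inequality are symmetric under interchange of $a$ and $b$. Then I would write the displacement vector as an integral of the velocity via the fundamental theorem of calculus, $\gamma(b) - \gamma(a) = \int_a^b \gamma'(s)\, ds$.

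Next I would bound the norm of this vector-valued integral by the integral of the norm, using the standard triangle inequality for integrals, $\left|\int_a^b \gamma'(s)\, ds\right| \le \int_a^b |\gamma'(s)|\, ds$. Substituting $|\gamma'(s)| = 1$ collapses the right-hand side to $\int_a^b 1\, ds = b - a = |a-b|$, which is exactly the desired bound. This is essentially the same mechanism already used in the proof of Lemma~\ref{closeCurva}, where the $1$-Lipschitz behavior of $e^{i\theta}$ was invoked in an analogous way.

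There is no serious obstacle here; the only point requiring a word of care is the regularity assumption needed to invoke the fundamental theorem of calculus. Since $\gamma$ is an arc length parameterization it is at least absolutely continuous (indeed $C^1$ for the smooth curves considered here), which suffices for the integral representation to hold. With that in hand the result is immediate, and it records the geometrically transparent fact that the chordal distance between two points on a curve never exceeds the arc length separating them.
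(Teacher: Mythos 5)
Your proposal is correct and is essentially the paper's own argument: both reduce to $a \le b$, write $\gamma(b)-\gamma(a)$ as the integral of the unit-speed velocity (the paper writes it as $e^{i\theta(s)}$, you as $\gamma'(s)$ with $|\gamma'(s)|=1$), and apply the triangle inequality for integrals. The two proofs differ only in notation, so nothing further is needed.
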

\begin{proof}
Take $a\leq b$ without loss of generality. Then
\begin{gather*}
    |\gamma(b) - \gamma(a)| = |\int_a^b e^{i\theta(s)}ds|\\
    \leq \int_a^b|e^{i\theta(s)}|ds \leq b-a
\end{gather*}

\end{proof}

\begin{lemma}\label{uncommonDomain}
Let $\kappa(s):[0,L]\to \mathbb{R},\kappa^*(s):[0,L^*]\to \mathbb{R}$ be curvature functions parameterized with respect to arc length. Let $I:=[x_1,y_1]\subset [0,L]$ and $I^*:=[x_2,y_2]\subset [0,L^*]$ be intervals of the same length $y_1-x_1=y_2-x_2 = \ell$. For all $\varepsilon>0$, there exists $\delta>0$ so that if $|\kappa(s+x_1) - \kappa^*(s+x_2)| < \delta$ for all $s\in [0,\ell]$  and $\max\{d([0,L], I), d([0,L^*], I^*)\}<\delta$ then $d(\Gamma, g\Gamma^*)<\varepsilon$ for some $g\in SE(2)$. 
\end{lemma}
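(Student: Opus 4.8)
The plan is to decompose each curve into a \emph{common} middle portion --- the arc corresponding to the matched subinterval --- and two short \emph{tail} portions, and to control these pieces separately. First I would pass to the shifted restrictions $\tilde\kappa(s):=\kappa(s+x_1)$ and $\tilde\kappa^*(s):=\kappa^*(s+x_2)$, both defined on the common domain $[0,\ell]$; by hypothesis they differ pointwise by less than $\delta$ there. Their defining curves are exactly the middle portions $\gamma|_{[x_1,y_1]}$ and $\gamma^*|_{[x_2,y_2]}$. Since $I\subset[0,L]$ and $I^*\subset[0,L^*]$ force $\ell\le\min\{L,L^*\}$, I can apply Lemma \ref{closeCurva} to $\tilde\kappa,\tilde\kappa^*$ to obtain a single $g\in SE(2)$ --- the rigid motion aligning $g\gamma^*(x_2)$ with $\gamma(x_1)$ and matching their unit tangents --- for which the middle portions are uniformly $C\delta$-close, where $C$ depends only on the fixed length $\ell\le\max\{L,L^*\}$ (concretely $C=\ell^2/2$, coming straight from the bound in that lemma).

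Next I would unpack the hypothesis $\max\{d([0,L],I),d([0,L^*],I^*)\}<\delta$ to see that it makes the tails short. Because $I=[x_1,y_1]$ is a subinterval of $[0,L]$, the Hausdorff distance $d([0,L],I)$ equals $\max\{x_1,L-y_1\}$, so $x_1<\delta$ and $L-y_1<\delta$, and symmetrically $x_2<\delta$, $L^*-y_2<\delta$. By Lemma \ref{wiggleRoom}, every point on the left tail $\gamma|_{[0,x_1]}$ lies within $\delta$ of the junction $\gamma(x_1)$ and every point on the right tail $\gamma|_{[y_1,L]}$ lies within $\delta$ of $\gamma(y_1)$; the identical statement holds for $g\Gamma^*$ about the junctions $g\gamma^*(x_2)$ and $g\gamma^*(y_2)$.

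Then I would combine these estimates to bound $d(\Gamma,g\Gamma^*)$ directly from the definition of the Hausdorff metric. For $\gamma(s)$ with $s$ in the middle interval, it is $C\delta$-close to the corresponding point of $g\Gamma^*$. For $s$ in a tail, $\gamma(s)$ is within $\delta$ of a junction point, which in turn is within $C\delta$ of a point of $g\Gamma^*$, so by the triangle inequality $\gamma(s)$ is within $(1+C)\delta$ of $g\Gamma^*$. This bounds $\sup_{p\in\Gamma}d(p,g\Gamma^*)$. Running the same case analysis with the roles reversed --- using the \emph{same} $g$ together with the tail bounds for $\Gamma^*$ --- bounds $\sup_{q\in g\Gamma^*}d(q,\Gamma)$ as well, giving $d(\Gamma,g\Gamma^*)\le(1+C)\delta$. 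Choosing $\delta<\varepsilon/(1+C)$ then completes the proof.

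The main obstacle I anticipate is bookkeeping rather than conceptual: I must confirm that one rigid motion $g$ simultaneously handles both directions of the Hausdorff supremum, and that the constant $C$ inherited from Lemma \ref{closeCurva} is taken against the fixed length so that it does not secretly depend on $\delta$. The one genuine subtlety is that the short tails carry completely uncontrolled curvature, yet still cannot wander far; this is exactly what Lemma \ref{wiggleRoom} guarantees, since arc-length parameterization bounds displacement by elapsed parameter independently of curvature.
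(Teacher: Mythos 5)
Your proposal is correct and follows essentially the same route as the paper's proof: apply Lemma \ref{closeCurva} to the shifted curvatures on the matched middle interval (choosing $g$ to align the junction points and tangents), deduce $x_1,x_2,L-y_1,L^*-y_2<\delta$ from the Hausdorff hypothesis, control the tails via Lemma \ref{wiggleRoom}, and finish with the triangle inequality. Your write-up is in fact somewhat more careful than the paper's, making the constant $C=\ell^2/2$ explicit and verifying both directions of the Hausdorff supremum, which the paper dispatches with ``the other cases follow similarly.''
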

\begin{proof}
 By lemma \ref{closeCurva}, we see that the claim is satisfied for $\gamma(I)$ and $\gamma^*(I^*)$. In particular, we choose $\delta$ so that, taking the same $g$ as in lemma \ref{closeCurva}, {\small $$d(\gamma(I), g\gamma^*(I^*))<\varepsilon/2.$$} By the argument presented previously, we particularly take it so that $|\gamma(y_1) - \gamma^*(y_2)|,|\gamma(x_1) - \gamma^*(x_2)|<\varepsilon/2$. We now check the remaining portions of the curves. We first observe that because $d(I,[0,L]),d(I^*,[0,L^*])<\delta$, 
 $$
 |x_1|,|x_2|,|L-y_1|,|L^*-y_2|<\delta.
 $$
 thus by lemma \ref{wiggleRoom}, for all $s\in [0,x_1]$, 
{\small
\begin{gather*}
    |\gamma^*(x_2) - \gamma(s)| \leq |\gamma^*(x_2) - \gamma(x_1)| + |\gamma(x_1) - \gamma(s)|\\
    \leq \varepsilon/2 + |x_1-s| \leq \varepsilon/2 + \delta<\varepsilon
\end{gather*}
}
if we take $\delta < \varepsilon/2.$ The other cases follow similarly. 
\end{proof}

\subsubsection{The Signature as a Phase Portrait}
Our main task now is to show that $\kappa$ and $\kappa^*$ are close given that their signatures are close under the Hausdorff metric. We would expect this to be true as they \textit{start} and \textit{end} close to each other, and go at about the same pace as they have similar $v$-coordinates. To show this formally, we want to construct a ``fastest" and ``slowest" curvature that correspond to signatures in a sufficiently small neighborhood about $S$.\\
Define the \textit{top and bottom curves} of $T(S,\delta)$ to be the functions
\begin{gather*}
\sigma^+(s):=(\kappa(\rho^+(s)), \kappa'(\rho^+(s)) + \delta)\\
\sigma^-(s):=(\kappa(\rho^-(s)), \kappa'(\rho^-(s)) - \delta)
\end{gather*}

\includegraphics[width=\columnwidth]{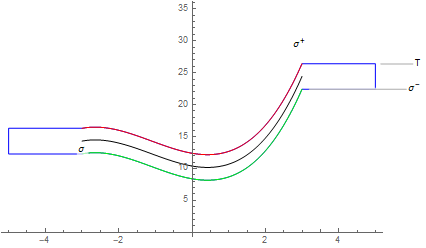}
\captionof{figure}{The tube neighborhood about a signature with $\sigma^+,\sigma^-$ highlighted in red and green respectively.}

\medskip

where $\rho^\pm$ is the parameterization so that they are in phase, and $S=\{\sigma(s)\} = \{\kappa(s),\kappa'(s)\}$ is a phase portrait. Our intuition is that $\rho^+$ will define a slight ``speed-up" in the input as $\sigma^+$ is raised in the $v$-direction, and $\rho^-$ will be a slight ``slow-down" for the opposite reason.\\
A slight disclaimer about this language is that it is only reflective of the case where $T(S,\delta)$ does not intersect the $u$-axis. Otherwise, the curve does not slow-down, but rather changes direction, as the derivative of the curvature changes. For this reason, an important constant will be the distance from the $u$-axis,

$$
m:= \min |\kappa'|.
$$

\begin{lemma}\label{rho}
 The parameterizations $\rho^\pm$ defined above exist so long as $\delta<m$. In this case, we also have that $$s(1-\delta/m) \leq \rho^\pm(s) \leq s(1+\delta/m).$$
\end{lemma}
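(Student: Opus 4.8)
The plan is to translate the ``in phase'' requirement on $\sigma^\pm$ into an ordinary differential equation for $\rho^\pm$, and then read off both existence and the stated bounds from the size of the right-hand side. Recall that a parameterization $(u(s),v(s))$ is in phase exactly when $u_s(s)=v(s)$. For $\sigma^+(s)=(\kappa(\rho^+(s)),\,\kappa'(\rho^+(s))+\delta)$ this means
$$
\frac{d}{ds}\kappa(\rho^+(s)) = \kappa'(\rho^+(s)) + \delta,
$$
and applying the chain rule to the left-hand side (remembering that here $\kappa'$ denotes the derivative of $\kappa$ with respect to its own argument) gives $\kappa'(\rho^+(s))\,(\rho^+)'(s) = \kappa'(\rho^+(s)) + \delta$, i.e.
$$
(\rho^+)'(s) = 1 + \frac{\delta}{\kappa'(\rho^+(s))}.
$$
The identical computation for $\sigma^-$ yields $(\rho^-)'(s) = 1 - \delta/\kappa'(\rho^-(s))$. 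Both are autonomous equations of the form $y' = F_\pm(y)$ with $F_\pm(t) = 1 \pm \delta/\kappa'(t)$, to be solved with initial condition $\rho^\pm(0)=0$.

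Next I would address existence. Since $\Gamma$ has no vertices we have $|\kappa'(t)| \geq m > 0$ for all $t$, so $F_\pm$ is a well-defined continuous function of $t$ (the denominator never vanishes), and existence of a solution $\rho^\pm$ follows from the Picard--Lindel\"of theorem (or Peano existence), exactly the kind of fixed-point/iteration argument flagged in the introduction. The role of the hypothesis $\delta<m$ is not existence of the ODE per se but rather that the solution is a legitimate reparameterization, i.e.\ strictly increasing; this is what I verify next.

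From $|\kappa'(t)| \geq m$ we get $|\delta/\kappa'(t)| \leq \delta/m$, and because $m>0$ forces $\kappa'$ to have constant sign along the whole (compact) curve, the term $\pm\delta/\kappa'$ is controlled uniformly in absolute value. Hence for every $s$,
$$
1 - \frac{\delta}{m} \;\leq\; (\rho^\pm)'(s) \;\leq\; 1 + \frac{\delta}{m}.
$$
When $\delta<m$ the lower bound $1-\delta/m$ is strictly positive, so $(\rho^\pm)'>0$ and $\rho^\pm$ is strictly increasing, i.e.\ a valid in-phase parameterization, mapping its domain onto $[0,L]$ monotonically. Finally, integrating the displayed derivative bounds from $0$ to $s$ and using $\rho^\pm(0)=0$ gives
$$
s\left(1-\frac{\delta}{m}\right) \leq \rho^\pm(s) \leq s\left(1+\frac{\delta}{m}\right),
$$
which is the claimed estimate.

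I expect the only genuinely delicate point to be the bookkeeping in the first paragraph: correctly distinguishing the two meanings of the prime (derivative with respect to arc length $s$ versus derivative of $\kappa$ in its own variable) so that the chain rule produces the factor $(\rho^\pm)'$, and then confirming that it is precisely $\delta<m$, rather than merely $\kappa'\neq 0$, that keeps the reparameterization monotone once the tube is allowed to dip toward, but not across, the $u$-axis.
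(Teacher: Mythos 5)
Your proposal is correct and follows essentially the same route as the paper: both derive the ODE $(\rho^\pm)'(s) = 1 \pm \delta/\kappa'(\rho^\pm(s))$ from the in-phase condition, bound the right-hand side between $1-\delta/m$ and $1+\delta/m$ using $|\kappa'|\geq m$, and integrate from $\rho^\pm(0)=0$ to obtain the stated estimate. The only divergence is in the existence step: where you invoke Picard--Lindel\"of/Peano, the paper exploits the separable form $d\rho^\pm/H^\pm(\rho^\pm) = ds$ with $H^\pm(t):=1\pm\delta/\kappa'(t)$ continuous and non-vanishing (guaranteed by $\delta<m$), which gives existence \emph{and} uniqueness by inverting a strictly monotone integral, without needing any Lipschitz hypothesis on $\kappa'$.
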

\begin{proof}
As $\sigma^\pm$ are in phase, we get the defining ODE
{\small
\begin{gather*}
(\kappa(\rho^\pm(s)))' = \kappa'(\rho^\pm(s))(\rho^\pm)'(s) = \kappa'(\rho^\pm(s)) \pm \delta\\
\Longrightarrow (\rho^\pm)'(s)=1\pm \delta /\kappa'(\rho^\pm(s)).
\end{gather*}
}
Because the initial condition is $\rho^\pm(0) = 0$, the second claim immediately follows assuming existence. Define 
$$
H^\pm(s):= 1\pm \delta/\kappa'(s),
$$
so that we now have the separable system of equations
$$
\frac{d\rho^\pm}{ds} = H^\pm(\rho^\pm).
$$
$H^\pm$ is continuous and non-zero by our choice of $\delta$, so we have the unique solution given by
$$
\frac{d\rho^\pm}{H(\rho^\pm)} = ds.
$$
\end{proof}

This argument paves the way for a method we will need to relate a curvature to its signature in a more general setting. We use these ideas to establish a relationship between the signature and its interpretation as the graph of a function.

\begin{lemma}\label{sigDEQ}
Let $F:\mathbb{R}\to \mathbb{R}$ be non-zero and continuous. Then there exists a parameterization $\sigma(s)=(u(s),F(u(s))$ of its graph that is in phase, that is
$$
u'(s) = F(u(s)).
$$
\end{lemma}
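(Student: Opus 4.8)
The plan is to recognize this as the existence problem for the autonomous first-order ODE $u'(s) = F(u(s))$ and to solve it explicitly by separation of variables, exactly as was done for the special case $F = H^\pm$ in the proof of Lemma~\ref{rho}. Because we only want existence of a single in-phase parameterization (not uniqueness, and not a Lipschitz hypothesis), the cleanest route is to invert an antiderivative of $1/F$ rather than to invoke a general existence theorem such as Peano's.

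First I would record the key structural observation that makes separation of variables work: since $F$ is continuous and never zero on the connected domain $\mathbb{R}$, the intermediate value theorem forces $F$ to have constant sign. Consequently $1/F$ is continuous and of constant sign as well. I would then fix a basepoint $u_0$ and define $G(u) := \int_{u_0}^u \frac{dt}{F(t)}$, which by the fundamental theorem of calculus is $C^1$ with $G'(u) = 1/F(u) \neq 0$. The constant sign of $G'$ makes $G$ strictly monotone, hence a continuous bijection from $\mathbb{R}$ onto an open interval $J := G(\mathbb{R})$.

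Next I would set $u(s) := G^{-1}(s)$ for $s \in J$; the inverse exists and is $C^1$ by the inverse function theorem (or directly from strict monotonicity). Differentiating the identity $G(u(s)) = s$ gives $G'(u(s))\,u'(s) = 1$, that is $u'(s)/F(u(s)) = 1$, so $u'(s) = F(u(s))$ as required. Setting $\sigma(s) = (u(s), F(u(s)))$ then yields an in-phase parameterization of the graph of $F$, with parameter ranging over the interval $J$.

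The only genuinely delicate point is the invertibility of $G$, which rests entirely on the constant-sign observation; without the hypothesis that $F$ is nonvanishing this construction would break down exactly where $F = 0$. A minor secondary subtlety is that the natural domain of the parameter is $J = G(\mathbb{R})$ rather than all of $\mathbb{R}$, but since the lemma only asserts the existence of \emph{some} parameterization of the graph this presents no difficulty.
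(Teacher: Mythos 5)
Your proposal is correct and takes essentially the same route as the paper: both solve $u'(s)=F(u(s))$ by separation of variables, defining $G(u)=\int_{u_0}^{u}\frac{dt}{F(t)}$, observing it is strictly monotone because $F$ is continuous and nonvanishing, and inverting it to obtain $u$. Your version is in fact slightly more careful than the paper's---you make the constant-sign observation explicit via the intermediate value theorem and note that the parameter domain is the interval $G(\mathbb{R})$ rather than all of $\mathbb{R}$---but the core argument is identical.
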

\begin{proof}
We start by taking the above differential equation, and solve to get
\begin{gather*}
    \frac{u'(s)}{F(u(s))} = 1 \Longrightarrow \int_0^t \frac{u'(s)dt}{F(u(s))} = t\\
    \Longleftrightarrow \int_{u(0)}^{u(s)}\frac{du}{F(u)} = t.
\end{gather*}
Thus the function
$$
G(s):= \int_{u(0)}^t \frac{du}{F(u)}
$$
has the property $u(G(s)) = G(u(s)) = t$, that is, $G=u^{-1}(s).$ This is exactly as we should expect as, assuming the existence of $u$, we get
{
\begin{gather*}
u'(s) = F(u(s))\\
\Longrightarrow (u^{-1})'(s) = \frac{1}{F(u(u^{-1}(s)))}=\frac{1}{F(s)}.
\end{gather*}
}
As $F$ is non-zero and continuous, $G$ is differentiable and strictly increasing/decreasing. Thus we can invert to get $u$ as desired.
\end{proof}

With this in mind, we can now make precise the idea that the top and bottom curves are the fastest/slowest. That is, what we want to show is that given any curvature function, if we take the curvature that corresponds to the signature at the top of the tube it will have a faster rate of change, while that on the bottom will have a slower rate of change.

\begin{lemma}\label{BottomTop}
Let $S^*\subset IT(S,\delta)$, where $0<\delta<m$. Then, for $t$ such that $\rho^+(s)\leq L$ (in other words, $t\leq (\rho^+)^{-1}(L)$),
$$
\kappa(\rho^-(s+a)) \leq \kappa^*(s)\leq \kappa(\rho^+(s + b))
$$
where $a,b$ are chosen so that $\kappa(\rho^-(a)) = \kappa(\rho^+(b)) = \kappa^*(0).$ In particular, if $\kappa^*(0)=\kappa(0)$ then
$$
\kappa(\rho^-(s)) \leq \kappa^*(s)\leq \kappa(\rho^+(s))
$$
\end{lemma}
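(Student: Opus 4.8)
The plan is to recast the three functions $\kappa\circ\rho^{-}$, $\kappa^{*}$, and $\kappa\circ\rho^{+}$ as a subsolution, a solution, and a supersolution of one autonomous ODE, and then to compare them by a monotone change of variables. Since $\Gamma$ has no vertices and $\delta<m=\min|\kappa'|$, the curvature $\kappa$ is strictly monotone; I assume without loss of generality that $\kappa'>0$, the decreasing case being identical after flipping every inequality. Then $\kappa$ is invertible on its range, so by Lemma \ref{sigDEQ} the signature $S$ is the graph $v=F(u)$ of the continuous function $F(u):=\kappa'(\kappa^{-1}(u))$, which satisfies $F\geq m>\delta>0$. Reading off the ODE in the proof of Lemma \ref{rho}, the functions $u^{+}(s):=\kappa(\rho^{+}(s))$ and $u^{-}(s):=\kappa(\rho^{-}(s))$ are genuine solutions of $u'=F(u)+\delta$ and $u'=F(u)-\delta$ respectively, and in particular both are strictly increasing.

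Next I would translate the hypothesis $S^{*}\subset IT(S,\delta)$ into a differential inequality. By definition of the inner tube, every point $(\kappa^{*}(s),(\kappa^{*})'(s))$ shares its $u$-coordinate with a (unique, by monotonicity) point of $S$ and lies within $\delta$ of it in the $v$-direction, so $|(\kappa^{*})'(s)-F(\kappa^{*}(s))|<\delta$, that is $F(\kappa^{*}(s))-\delta<(\kappa^{*})'(s)<F(\kappa^{*}(s))+\delta$. In particular $(\kappa^{*})'>0$, so $\kappa^{*}$ is itself strictly increasing and its range lies inside that of $\kappa$. Choosing $a,b$ so that $\kappa(\rho^{-}(a))=\kappa(\rho^{+}(b))=\kappa^{*}(0)$, the shifted functions $\underline{w}(s):=\kappa(\rho^{-}(s+a))$ and $\overline{w}(s):=\kappa(\rho^{+}(s+b))$ solve the same two ODEs as $u^{\mp}$ and agree with $\kappa^{*}$ at $s=0$.

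The comparison itself is where I expect to spend the effort, and I would do it by a time-to-level argument rather than a generic comparison principle, since $F$ is only continuous and need not be Lipschitz. All three functions $\underline{w},\kappa^{*},\overline{w}$ are strictly increasing and start at the common value $u_{0}:=\kappa^{*}(0)$, so each may be reparameterized by its level $U$, and the time needed to climb from $u_{0}$ to a level $U$ is $\int_{u_{0}}^{U} du/(\text{velocity at level }u)$. The velocity of $\overline{w}$ at level $u$ is $F(u)+\delta$, that of $\underline{w}$ is $F(u)-\delta$, and that of $\kappa^{*}$ lies strictly between them by the differential inequality above; hence, inverting, $\overline{w}$ reaches each level soonest and $\underline{w}$ latest. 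Read back at a fixed time $s$, this is exactly $\kappa(\rho^{-}(s+a))\leq\kappa^{*}(s)\leq\kappa(\rho^{+}(s+b))$, and the restriction $t\leq(\rho^{+})^{-1}(L)$ serves only to keep $u^{+}$ inside the domain $[0,L]$. The special case $\kappa^{*}(0)=\kappa(0)$ forces $a=b=0$ and gives the clean sandwich. The main obstacle is really bookkeeping: checking that the level reparameterization is legitimate, which is guaranteed by the strict positivity of all three velocities, and that the inequalities between the velocities invert correctly into the stated inequalities between the functions.
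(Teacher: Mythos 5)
Your proposal is correct and takes essentially the same approach as the paper: the paper likewise uses Lemma \ref{sigDEQ} to write the inverses $(\kappa^+)^{-1}$, $(\kappa^*)^{-1}$, $(\kappa^-)^{-1}$ as integrals of $1/F^+$, $1/F$, $1/F^-$ from the common starting value $\kappa^*(0)$, compares them via $F^-\leq F\leq F^+$, and then inverts by monotonicity --- precisely your time-to-level argument, with the top and bottom graph functions written as $F^\pm$ rather than $F\pm\delta$.
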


\begin{proof}
We first show that such an $a,b$ exist. As has been discussed, $\kappa,\rho^{\pm}$ are monotone and invertible, thus examining the desired values we see that
\begin{gather*}
    a= (\rho^{-})^{-1}(\kappa^{-1}(\kappa^*(0)))\\
     b= (\rho^{+})^{-1}(\kappa^{-1}(\kappa^*(0)))
\end{gather*}
so it remains to show that this is a well defined formula. $S^*\subset IT(S,\delta)$ implies that $\kappa^*(s) \in \kappa([0,L])$ thus the inverse is defined on this set. $\rho^\pm$ have image $[0,L]$ which is the domain of $\kappa$, so the claim follows. Note that these expressions make the second claim immediate as in that case $a=b=0.$
\\
Define
$$
\kappa^-(s):=\kappa(\rho^-(s+a)),\ \kappa^+(s):=\kappa(\rho^+(s+ b)).
$$
As before, we re-interpret these signatures as the graphs of the function $F^-,F,F^+$ respectively and write $t_0:=\kappa^*(0)$ as their common starting point. Note that the requirement for the curvature functions to be defined is equivalent to only integrating these graph functions on the domain where they are defined, as by lemma \ref{sigDEQ}, this gives us their inverses. Explicitly,
\begin{gather*}
    (\kappa^*)^{-1}(s) = \int_{t_0}^t \frac{dt}{F(s)},\\
    (\kappa^+)^{-1}(s) = \int_{t_0}^t \frac{dt}{F^+(s)},\\
    (\kappa^-)^{-1}(s) = \int_{t_0}^t \frac{dt}{F^-(s)}.
\end{gather*}
By construction, we also have that $F^-,F^+$ are the top and bottom curves of $IT(S,\delta),$ thus
\begin{gather*}
F^-\leq F\leq F^+\\
\Longrightarrow (\kappa^+)^{-1}(s) \leq (\kappa^*)^{-1}(s)\leq (\kappa^-)^{-1}(s).
\end{gather*}
Because each of these curvatures are monotonically increasing, this gives us the desired inequality. We make the above restriction on $s$ as again, $\kappa$ is only defined on $[0,L]$, and thus it is necessary that
$$
s\leq \rho^+(s) \leq L.
$$
\end{proof}

We now want to use the functions $\rho^\pm$ to bound how far apart two curvatures can be given they are in the same tube neighborhood. Because we are able to bound these curvatures as functions of $\kappa$, we will be able to produce explicit numerical bounds which we will cover in more detail in the following section.

\begin{lemma}\label{Ltau}
Let $0<\delta<\min\{|\kappa(0)-\kappa(L)|/2, m\}$, and define
\begin{gather*}
    \ell_\tau := \frac{\kappa^{-1}(\kappa(0) +\delta)}{1+\delta/m},\\
    L_\tau := \frac{\kappa^{-1}(\kappa(L) - \delta)}{1+\delta/m} - \ell_\tau.
\end{gather*}
Then, for all $s\in [0,L_\tau],$
\begin{gather*}
s(1-\delta/m) \leq \rho^-(s)\\
\leq \rho^+(s+\rho_\delta) \leq (s+\ell_\tau)(1+\delta/m)
\end{gather*}
\end{lemma}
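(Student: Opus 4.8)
The plan is to read the displayed chain as three separate assertions: the two outer inequalities I would dispatch directly from Lemma \ref{rho}, the middle one from monotonicity, and the genuine work is to verify that the constants $\ell_\tau,L_\tau$ are well defined, that $L_\tau>0$, and that the whole chain stays inside the domain $[0,L]$ of $\kappa$. Throughout I read the shift $\rho_\delta$ as $\ell_\tau$ (the rightmost bound forces $\rho_\delta\le \ell_\tau$, and $\rho_\delta=\ell_\tau$ is the extremal choice).

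First I would reduce to the monotone case. Since $\Gamma$ has no vertices, $\kappa'$ never vanishes, so it is sign-definite on $[0,L]$; relabeling orientation if necessary, assume $\kappa'\ge m>0$, so $\kappa\colon[0,L]\to[\kappa(0),\kappa(L)]$ is a strictly increasing bijection with increasing inverse. The hypothesis $\delta<|\kappa(0)-\kappa(L)|/2=(\kappa(L)-\kappa(0))/2$ guarantees that $\kappa(0)+\delta$ and $\kappa(L)-\delta$ both lie in the range $[\kappa(0),\kappa(L)]$, so $\kappa^{-1}(\kappa(0)+\delta)$ and $\kappa^{-1}(\kappa(L)-\delta)$ are well-defined points of $[0,L]$ and hence $\ell_\tau$ and $L_\tau+\ell_\tau$ make sense. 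The same inequality gives positivity of $L_\tau$: since $\kappa^{-1}$ is increasing,
\[
L_\tau>0 \iff \kappa^{-1}(\kappa(L)-\delta)>\kappa^{-1}(\kappa(0)+\delta) \iff \kappa(L)-\kappa(0)>2\delta,
\]
which is exactly the standing assumption. The condition $\delta<m$ is what lets Lemma \ref{rho} supply $\rho^\pm$ in the first place.

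Next I would clear the outer two inequalities. Both are immediate instances of the two-sided estimate $s(1-\delta/m)\le\rho^\pm(s)\le s(1+\delta/m)$ of Lemma \ref{rho}, which holds for either choice of sign: the lower end gives $s(1-\delta/m)\le\rho^-(s)$, and applying the upper end at argument $s+\ell_\tau$ gives $\rho^+(s+\ell_\tau)\le(s+\ell_\tau)(1+\delta/m)$. For the middle inequality $\rho^-(s)\le\rho^+(s+\ell_\tau)$, I would compare the defining ODEs from Lemma \ref{rho}: since $\kappa'>0$ we have $(\rho^+)'(s)=1+\delta/\kappa'(\rho^+(s))\ge 1\ge 1-\delta/\kappa'(\rho^-(s))=(\rho^-)'(s)$, and with the common initial value $\rho^+(0)=\rho^-(0)=0$ this yields $\rho^+(s)\ge\rho^-(s)$ for $s\ge0$; monotonicity of $\rho^+$ together with $\ell_\tau\ge0$ then gives $\rho^+(s+\ell_\tau)\ge\rho^+(s)\ge\rho^-(s)$.

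The remaining and most delicate point is that the estimate is asserted only on $[0,L_\tau]$, and this is where the precise algebraic form of $L_\tau$ (with the $1+\delta/m$ denominator) earns its keep. For $s\in[0,L_\tau]$ one has $s+\ell_\tau\le L_\tau+\ell_\tau=\kappa^{-1}(\kappa(L)-\delta)/(1+\delta/m)$, so by the upper bound of Lemma \ref{rho},
\[
\rho^+(s+\ell_\tau)\le (s+\ell_\tau)(1+\delta/m)\le \kappa^{-1}(\kappa(L)-\delta)\le \kappa^{-1}(\kappa(L))=L,
\]
so $\rho^+(s+\ell_\tau)$ stays in the domain $[0,L]$ of $\kappa$ and every term of the chain is meaningful. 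I expect the main obstacle to be exactly this bookkeeping — confirming that the cancellation of the factor $1+\delta/m$ lands the linear upper bound inside $\kappa^{-1}([\kappa(0)+\delta,\kappa(L)-\delta])\subset[0,L]$, and that well-definedness and positivity both follow from the single hypothesis $\delta<|\kappa(0)-\kappa(L)|/2$. Once the domain containment is secured, the three inequalities follow as above and hold uniformly on $[0,L_\tau]$, which is the claim.
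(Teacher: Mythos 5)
Your proof is correct and follows essentially the same route as the paper's: both rest on Lemma \ref{rho}, checking the chain at $s=0$ and then comparing derivatives ($1-\delta/m \le (\rho^-)' \le 1 \le (\rho^+)' \le 1+\delta/m$), and your domain bookkeeping (that $\rho^+(s+\ell_\tau)\le\kappa^{-1}(\kappa(L)-\delta)\le L$ on $[0,L_\tau]$, and that $L_\tau>0$ is equivalent to $2\delta<|\kappa(L)-\kappa(0)|$) is more careful than what the paper records. The one place you diverge is the reading of the undefined symbol $\rho_\delta$: the paper's proof implicitly defines it by $\rho^+(\rho_\delta)=\kappa^{-1}(\kappa(0)+\delta)=\ell_\tau(1+\delta/m)$, i.e.\ $\rho_\delta=(\rho^+)^{-1}\bigl(\ell_\tau(1+\delta/m)\bigr)$, which is the \emph{largest} admissible shift and in general satisfies $\rho_\delta\ge\ell_\tau$, since $\rho^+(x)\le x(1+\delta/m)$ gives $(\rho^+)^{-1}(y)\ge y/(1+\delta/m)$. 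So your parenthetical justification that the rightmost bound ``forces $\rho_\delta\le\ell_\tau$'' is backwards: it forces $\rho_\delta\le(\rho^+)^{-1}\bigl(\ell_\tau(1+\delta/m)\bigr)$, of which $\ell_\tau$ is a safe but sub-extremal choice, while the paper's choice makes the rightmost inequality an equality at $s=0$. This does not damage your argument: the statement you prove (with shift $\ell_\tau$) is true and correctly proved, and it is all that is needed downstream, since Lemma \ref{tauBeta} invokes Lemma \ref{Ltau} only in the weaker form $\rho^-(s)\le\rho^+(s)\le(s+\ell_\tau)(1+\delta/m)$, with the shift dropped entirely, which follows from either version.
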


\begin{proof}
Note that the condition on $\delta$ is necessary so that $\rho^\pm$ are defined and that $L_\tau>0.$
\\
Notice that by construction,
\begin{gather*}
0(1-\delta/m) =0= \rho^-(s)\\
< \rho^+(\rho_\delta) =\kappa^{-1}(\kappa(0)+\delta) = (\ell_\tau)(1+\delta/m),
\end{gather*}
and by lemma \ref{rho}, 
\begin{gather*}
    \frac{d}{ds}s(1-\delta/m) = 1-\delta/m \leq \frac{d}{ds}\rho^-(s)\\
    \leq \frac{d}{ds}\rho^+(s) \leq 1+\delta/m = \frac{d}{ds}(s+\ell_\tau)(1+\delta/m)
\end{gather*}
and so the result follows
\end{proof}

\begin{lemma} \label{tauBeta}
Let $S=\{(\kappa(s),\kappa'(s))\}$ be a phase portrait such that $\kappa'(s)\neq 0$. Take $0<\delta< \min\{|\kappa(0) - \kappa(L)|/2, m\}$. Let $S^*:=\{\kappa^*(s),(\kappa^*)'(s))\}\subset IT(S,\delta)$, and $d(S,S^*)<\delta$. Then for
\begin{gather*}
\tau(s) :=\kappa((s+ \ell_\tau)(1+\delta/m)),\quad s\in [0,L_\tau],\\
\beta(s):= \kappa(s(1-\delta/m)),\quad s\in [0,L_\tau],
\end{gather*}
for all $s\in [0,L_\tau]$,
$$
\beta(s) \leq \kappa(s), \kappa^*(s) \leq \tau(s)
$$
\end{lemma}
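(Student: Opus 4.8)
The plan is to reduce everything to the monotonicity of $\kappa$ together with the two-sided control on the reparametrizations $\rho^\pm$ already recorded in Lemmas \ref{rho}, \ref{BottomTop}, and \ref{Ltau}. Since $\kappa'\neq 0$ is continuous it has a constant sign, so $\kappa$ is strictly monotone; I assume without loss of generality that $\kappa$ is increasing (the decreasing case follows by reflecting the $u$-axis, which merely swaps the roles of $\beta$ and $\tau$). Note that $m=\min|\kappa'|>0$ and $\delta<m$ give $0<1-\delta/m$, so both $s(1-\delta/m)$ and $(s+\ell_\tau)(1+\delta/m)$ are increasing in $s$. I would then split the claimed sandwich $\beta(s)\leq \kappa(s),\kappa^*(s)\leq \tau(s)$ into the trivial bound for $\kappa$ and the substantive bound for $\kappa^*$.

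For $\kappa$ itself the inequalities are immediate from monotonicity: since $\ell_\tau\geq 0$ and $\delta/m>0$ we have $s(1-\delta/m)\leq s\leq (s+\ell_\tau)(1+\delta/m)$, and applying the increasing function $\kappa$ to this chain gives $\beta(s)\leq \kappa(s)\leq \tau(s)$ at once. The lower bound for $\kappa^*$ is nearly as easy. Because $S^*\subset IT(S,\delta)$, every point of $S^*$ has $u$-coordinate in $\kappa([0,L])=[\kappa(0),\kappa(L)]$; in particular $\kappa^*(0)\geq\kappa(0)$, so the constant $a=(\rho^-)^{-1}(\kappa^{-1}(\kappa^*(0)))$ from Lemma \ref{BottomTop} is nonnegative. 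That lemma gives $\kappa^*(s)\geq \kappa(\rho^-(s+a))$, and Lemma \ref{rho} yields $\rho^-(s+a)\geq (s+a)(1-\delta/m)\geq s(1-\delta/m)$, so monotonicity of $\kappa$ delivers $\kappa^*(s)\geq \beta(s)$.

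The heart of the argument is the upper bound $\kappa^*(s)\leq \tau(s)$, and the first thing I would isolate is the sub-claim $\kappa^*(0)<\kappa(0)+\delta$. This is exactly where the Hausdorff hypothesis $d(S,S^*)<\delta$ enters: the left endpoint $(\kappa(0),\kappa'(0))\in S$ must lie within $\delta$ of $S^*$, yet every $q\in S^*$ has $u$-coordinate at least $\kappa^*(0)$ (here I use that all inner-tube points have $v$-coordinate exceeding $m-\delta>0$, forcing $\kappa^*$ to be increasing so that $\kappa^*(0)$ is its smallest value). Hence $\kappa^*(0)-\kappa(0)\leq \inf_{q\in S^*}|(\kappa(0),\kappa'(0))-q|<\delta$. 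Granting this, Lemma \ref{BottomTop} gives $\kappa^*(s)\leq \kappa(\rho^+(s+b))$ with $\rho^+(b)=\kappa^{-1}(\kappa^*(0))<\kappa^{-1}(\kappa(0)+\delta)=\ell_\tau(1+\delta/m)$, the last equality being the definition of $\ell_\tau$. Finally, integrating the bound $(\rho^+)'\leq 1+\delta/m$ from Lemma \ref{rho} over $[b,s+b]$ gives $\rho^+(s+b)\leq \rho^+(b)+s(1+\delta/m)<(s+\ell_\tau)(1+\delta/m)$, and applying the increasing $\kappa$ produces $\kappa^*(s)\leq \tau(s)$.

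I expect the main obstacle to be pinning down the starting value $\kappa^*(0)$ rather than the reparametrization estimates, which are routine once Lemmas \ref{rho} and \ref{BottomTop} are in hand: the inclusion $S^*\subset IT(S,\delta)$ only bounds $\kappa^*(0)$ below by $\kappa(0)$, and the sharp upper bound $\kappa^*(0)<\kappa(0)+\delta$ that motivates the $\ell_\tau$ offset genuinely requires the Hausdorff hypothesis. The remaining care is bookkeeping on domains: one must check that $\tau(s)$, $\beta(s)$, and $\rho^+(s+b)$ all stay inside $[0,L]$ for $s\in[0,L_\tau]$, which is precisely what the definition $L_\tau=\kappa^{-1}(\kappa(L)-\delta)/(1+\delta/m)-\ell_\tau$ is engineered to guarantee, since $\rho^+(s+b)<(s+\ell_\tau)(1+\delta/m)\leq \kappa^{-1}(\kappa(L)-\delta)<L$, and that $\kappa^*$ is itself defined on all of $[0,L_\tau]$.
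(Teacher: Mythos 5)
Your proof is correct and follows essentially the same route as the paper's: sandwich $\kappa^*$ between $\kappa(\rho^-(s+a))$ and $\kappa(\rho^+(s+b))$ via Lemma \ref{BottomTop}, then compare these to $\beta$ and $\tau$ through the linear bounds of Lemma \ref{rho} and the monotonicity of $\kappa$; indeed you are more explicit than the paper about where the hypothesis $d(S,S^*)<\delta$ enters, via your sub-claim $\kappa^*(0)<\kappa(0)+\delta$, which is what justifies absorbing the shift $b$ into the $\ell_\tau$ offset. The one item you flag but do not carry out --- that $\kappa^*$ is actually defined on all of $[0,L_\tau]$ --- is closed in the paper by observing that $d(S,S^*)<\delta$ forces $\kappa^*$ to reach at least the value $\kappa(L)-\delta$, while the upper bound $\kappa^*\leq \tau \leq \kappa(L)-\delta$ on $[0,L_\tau]$ shows it cannot do so, and hence cannot terminate, before $s=L_\tau$.
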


\includegraphics[width=\columnwidth]{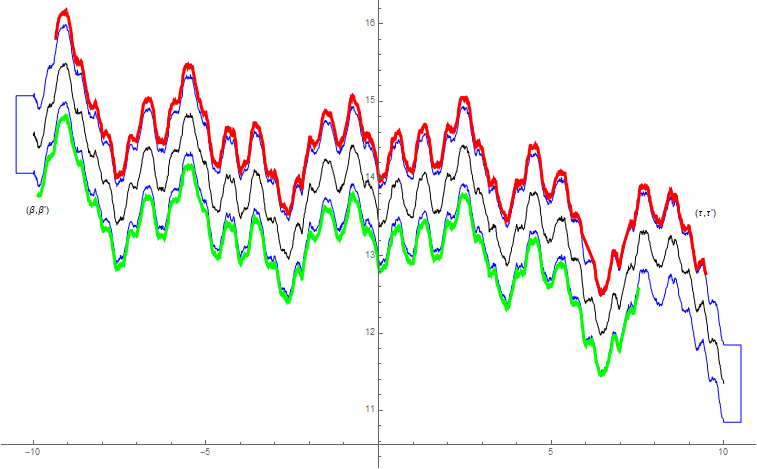}
\captionof{figure}{$(\tau,\tau')$ (red), and $(\beta,\beta')$ (green), plotted over the domain $[0,L_\tau]$. Observe that they are slightly above/below $\sigma^+,\sigma^-$ as they are slightly faster/slower.}

\begin{proof}
We assume our signature is in the upper half-plane. By lemma \ref{Ltau} we have that
{\small
\begin{gather*}
    s(1-\delta/m) \leq \rho^-(s) \leq \rho^+(s) \leq (s+\ell_\tau)(1+\delta/m),
\end{gather*}
}
so by the monotonicity of $\kappa$, we have
\begin{gather*}
    \beta(s) \leq \kappa^-(s) \leq \kappa^+(s) \leq \tau(s).
\end{gather*}
Then by lemma \ref{BottomTop}, because
$$
\kappa^-(s)\leq \kappa^*(s) \leq \kappa^+(s)
$$
the result follows. Observe that we must restrict to the interval $[0,L_\tau]$ as, by $d(S,S^*)<\delta$, we are only guaranteed that $\kappa(L) - \delta \leq \kappa^*(L^*)$, that is, $\kappa^*$ may only be defined on an interval up to where it achieves this value. Thus we choose $L_\tau$ so that
$$
\kappa*(s) \leq \kappa^*(L_\tau)\leq \tau(L_\tau) =\kappa(L)-\delta.
$$
\end{proof}

We now prove a result that specifies the same conditions as in lemma \ref{uncommonDomain}. Again, this is a slight strengthening of proposition 2, but one that will be much more convenient.
\begin{lemma}\label{generallyUseful}
Let $f'\neq 0$ and $S:=\{(f(s),f'(s));\ s\in [0,L]\}$. For all $\varepsilon>0$, there exists $\delta>0$ such that whenever another phase portrait 
 $S^*:=\{(g(s),g'(s));\ s\in[0,L^*]\}$ has the property $d(S,S^*)<\delta,\ S^*\subset T(S,\delta)$, then the conditions of lemma \ref{uncommonDomain} are met up to the bound $\varepsilon$. That is, there exists intervals $I=[x_1,y_1]\subset [0,L]$, $I^*=[x_2,y_2]\subset [0,L^*]$, where they have common length $y_1-x_1=y_2-x_2=\ell$, $\max\{d(I,[0,L]),d(I^*,[0,L^*])\}<\varepsilon$, and for all $s\in [0,\ell]$, $|f'(s+x_1) - g'(s+x_2)|<\varepsilon.$
\end{lemma}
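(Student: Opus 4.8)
The plan is to peel the two end-caps off $T(S,\delta)$ so as to land in the inner-tube situation already handled by Lemma \ref{tauBeta}, and then extract the two required estimates from the sandwich $\beta\le f,g\le\tau$ together with the uniform continuity of $f$. Assume without loss of generality that $f$ is increasing (otherwise reflect across the $u$-axis), and fix $\delta<\min\{m,|f(0)-f(L)|/2\}$ so that $\rho^\pm,\ell_\tau,L_\tau,\tau,\beta$ from Lemmas \ref{rho}--\ref{tauBeta} are all defined. The key structural remark is that every point of $T(S,\delta)$ has $v$-coordinate within $\delta$ of some value $f'(s)$, and each such value has magnitude at least $m$; hence for $\delta<m$ the coordinate $g'$ never vanishes and keeps the sign of $f'$ on all of $[0,L^*]$, so $g$ is strictly increasing. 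This monotonicity is what makes the crossing times below well defined.

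First I would trim. Since $d(S,S^*)<\delta$, the endpoints $(f(0),f'(0))$ and $(f(L),f'(L))$ of $S$ each have an $S^*$-point within $\delta$, while $S^*\subset T(S,\delta)$ confines the range of $g$ to $(f(0)-\delta,f(L)+\delta)$. Define the crossing times $x_2:=g^{-1}(f(0))$ and $y_2:=g^{-1}(f(L))$ and set $I^*:=[x_2,y_2]$; in the exceptional configuration where $g$ begins above $f(0)$ or ends below $f(L)$ I take the corresponding endpoint of $[0,L^*]$ instead, which only costs an $O(\delta)$ misalignment absorbed by the general shift $a,b$ of Lemma \ref{BottomTop}. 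On $[0,x_2]$ and $[y_2,L^*]$ the curve lies in a cap, where the $u$-excursion is below $\delta$ while $|g'|\ge m-\delta$; hence each trimmed piece has length at most $\delta/(m-\delta)$, giving $d(I^*,[0,L^*])=\max\{x_2,L^*-y_2\}<\varepsilon$ once $\delta$ is small.

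On $I^*$ the shifted curve $\tilde g(s):=g(s+x_2)$ satisfies $\tilde g(0)=f(0)$ and its phase portrait lies in $IT(S,\delta)$ with Hausdorff distance $<\delta$ to $S$: this is precisely the aligned hypothesis $\kappa^*(0)=\kappa(0)$ of Lemma \ref{tauBeta}, which then gives $\beta(s)\le f(s),\tilde g(s)\le\tau(s)$ and hence $|f(s)-\tilde g(s)|\le\tau(s)-\beta(s)$ on $[0,L_\tau]$. Since $\tau(s)-\beta(s)=f((s+\ell_\tau)(1+\delta/m))-f(s(1-\delta/m))$ with $\ell_\tau\to0$ and $\delta/m\to0$ as $\delta\to0$, uniform continuity of $f$ on $[0,L]$ forces $\sup_{s\in[0,L_\tau]}(\tau(s)-\beta(s))\to0$; I choose $\delta$ small enough to make this supremum $<\varepsilon$. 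Setting $\ell:=\min\{L_\tau,y_2-x_2\}$, $x_1:=0$, $I:=[0,\ell]$ and $I^*:=[x_2,x_2+\ell]$ gives intervals of common length $\ell$ and the bound $|f(s+x_1)-g(s+x_2)|<\varepsilon$ on $[0,\ell]$, which is the curvature closeness that Lemma \ref{uncommonDomain} consumes. Moreover $d(I,[0,L])=L-\ell<\varepsilon$ and $d(I^*,[0,L^*])<\varepsilon$, because both $L_\tau\to L$ (as $\ell_\tau\to0$ and $\kappa^{-1}(\kappa(L)-\delta)\to L$) and $y_2-x_2\to L$ (the inner-tube length distortion is controlled by the $\rho^\pm$ bounds). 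The stated derivative bound $|f'(s+x_1)-g'(s+x_2)|<\varepsilon$ then follows in one extra line: on the inner tube $f'=F\circ f$ while $|\tilde g'-F\circ\tilde g|<\delta$, where $F:=f'\circ f^{-1}$ is uniformly continuous, so $|f'-\tilde g'|\le\omega_F(|f-\tilde g|)+\delta$.

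The analytic core is inherited from Lemma \ref{tauBeta}; the genuinely new work, and the part I expect to be most delicate, is the endpoint bookkeeping — deriving the monotonicity of $g$ from tube containment alone so that $x_2,y_2$ exist and are unique, bounding the discarded cap lengths, and simultaneously matching $|I|=|I^*|$ while keeping both intervals within $\varepsilon$ of their full domains. The exceptional starting and ending configurations are the fiddliest point, since they are what force the $O(\delta)$ shift and the use of the general rather than the aligned form of Lemma \ref{BottomTop}.
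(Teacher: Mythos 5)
Your proposal is correct and follows essentially the same route as the paper's proof: trim the $O(\delta)$ end-cap pieces, align the shifted curve at $f(0)$, apply the sandwich $\beta \le f, \tilde g \le \tau$ from Lemma \ref{tauBeta}, and let $\delta \to 0$ — the only cosmetic differences being that the paper invokes the Lipschitz constant $M=\max f'$ where you invoke uniform continuity, and it bounds $L^*$ by extending $\beta$ past $\ell_\beta$ (getting $L^* \le L_\beta$) rather than by your crossing-time and cap-length bookkeeping. One small point in your favor: you also verify the literal derivative bound $|f'(s+x_1)-g'(s+x_2)|<\varepsilon$ via the modulus of continuity of $F = f'\circ f^{-1}$, whereas the paper only establishes closeness of $f$ and $g$ themselves, which is what Lemma \ref{uncommonDomain} actually consumes (the $f'$ in the statement is evidently a slip for $f$).
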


\begin{proof}
We first specify that $\delta$ meets the requirements of the preceding lemmas. Without loss of generality we assume that $S$ is in the upper half plane.\\
Our main focus will be on signatures where $\kappa^*(0)\geq \kappa(0)$. This is because if it starts further left, that portion can only contribute $\delta/m_1$ length to the curve at maximum. We argue this formally.\\
Suppose that $\kappa^*(0)\leq \kappa(0)$. Then we can take $t_0$ so that $\kappa^*(t_0)=\kappa(0)$. Now, on $[0,t_0]$, $(\kappa^*)'(s)\geq m_1$, thus
$$
\kappa^*(\delta/m_1)\geq \kappa^*(0)+\delta \geq \kappa(0)
$$
so $t_0\leq \delta/m_1.$ Thus the parameter $x_2:=\delta/m_1$ can be made as small as desired and this portion of the domain can be disregarded as specified in the lemma.
\\
From lemma \ref{tauBeta}, we see that, for $s\in [0,L_\tau]$, for any signature $S^*$ as hypothesized,
we have that
$$
\beta(s) \leq \kappa^*(s), \kappa(s) \leq \tau(s),
$$
thus, on this interval,
$$
|\kappa^*(s) - \kappa(s)| \leq \max\{|\kappa(s) - \beta(s)|,|\kappa(s) - \tau(s)|\}.
$$
We could also take the distance between $\beta$ and $\tau$, but this gives a somewhat sharper bound. Importantly, $\kappa$ is $C^1$ on a compact domain, so it has Lipschitz constant $M:= \max_{s\in[0,L]}\kappa'(s)$. Thus, {\small $\forall s\in [0,L_\tau]$,
\begin{align*}
|\kappa(s) - \tau(s)| &= |\kappa(s) - \kappa((s+\ell_\tau)(1+\delta/m))|\\
& \leq M|s - s -s\delta/m - \ell_\tau(1+\delta/m)|\\
&= M(s\delta/m + \ell_\tau(1+\delta/m))\\
&\leq M(\delta L_\tau/m + \ell_\tau(1+\delta/m))
\intertext{as $L_\tau < L$,}
&< M(\delta L/m + \ell_\tau(1+\delta/m))\\
&=:\alpha_1(\delta).
\end{align*}
$\forall s\in [0,L_\tau]$ 

\begin{align*}
    |\kappa(s)-\beta(s)| &= |\kappa(s) - \kappa(s(1-\delta/m))|\\
    &\leq M|t\delta/m|\\
    &\leq ML\delta/m =:\alpha_2(\delta).
\end{align*}
}

where we write each $\alpha_i(\delta)$ functionally as the only non-constant involved is $\delta$. From the above, it is clear that $\lim_{\delta\to 0} \alpha_1(\delta)  = \lim_{\delta\to 0} \alpha_2(\delta)= 0$, upon recollection that 
$$
\ell_\tau:= \frac{\kappa^{-1}(\kappa(0) +\delta)}{1+\delta/m}.
$$ 
Thus,
{\small
$$
|\kappa(s) - \kappa^*(s)| \leq \alpha_3(\delta):= \max\{\alpha_1(\delta),\alpha_2(\delta)\} <\varepsilon,
$$
}
on $[0,L_\tau]$ for $\delta$ sufficiently small. Thus for $\ell$ the length of the common interval as stated in the lemma, we set $\ell=L_\tau$. What remains to be shown is that $|L_\tau-L^*|<\varepsilon$ and $|L-L_\tau|<\varepsilon$ for sufficiently small $\delta.$ The second of these inequalities is clear by definition of $L_\tau$, so we must only show the first.\\

What we want is to find a maximal domain for $\kappa^*$ so that we can bound $L^*$. To do this, we extend the function $\beta$ to be
{
$$
\beta(s):=
\begin{cases}
\kappa(s(1-\delta/m_1)), & s\in [0, \ell_\beta],\\
m_2s + k(L), & s\in [\ell_\beta,L_\beta],
\end{cases}
$$
}
where $\ell_\beta$ is the value such that
$$
\kappa(\ell_\beta(1-\delta/m_1)) = \kappa(L),
$$
and $L_\beta$ is such that $\beta(L_\beta) = \kappa(L) + \delta.$
It can be checked analogously to lemmas \ref{tauBeta} and \ref{BottomTop} that 
$$
\beta(s) \leq \kappa^*(s),
$$
thus, because $\kappa^*\leq \kappa(L)+\delta$ for all $s$, it must be that $L^*\leq L_\beta$ as
$$
\beta(L_\beta) = \kappa(L)+\delta.
$$
By construction, we see that 
$$
\ell_\beta = L/(1-\delta/m_1),\ L_\beta = \ell_\beta + \delta/m_2,
$$
and so $\lim_{\delta\to 0} L_\tau = \lim_{\delta\to 0} L_\beta = L$ as desired.
\end{proof}
The main theorem follows immediately from this lemma and lemma \ref{uncommonDomain}, as was previously remarked.
\newtheorem{corollary}{Corollary}

\subsubsection{Explicit Bound}
\begin{corollary} \label{ExpBound}
{
Let $S_\Gamma$ be a signature defined by the curve $\Gamma$, where $\Gamma$ has no vertices and has arc length parameterization $\gamma:[0,L]\to \mathbb{R}^2.$ Let $0<\delta<\max\{|\kappa(L)-\kappa(0)|/2,m\}$, and $\Gamma^*$ have signature $S_{\Gamma^*}$. If $d(S_{\Gamma^*}, S_\Gamma)<\delta$ and $S_{\Gamma^*}\subset T(S_\Gamma,\delta)$, then there exists $g\in SE(2)$ such that $d(\Gamma,g\Gamma^*)<\varepsilon(\delta)$ where
{
\begin{gather*}
    \varepsilon(\delta):= \max\{\delta/m_1, |L_\tau - L_\beta|\\
    + L_\tau^2 M(\delta L_\tau/m + \ell_\tau(1+\delta/m))/2\}
\end{gather*}
}
which is $O(\delta)$.
}
\end{corollary}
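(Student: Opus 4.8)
The plan is to treat this corollary as the quantitative refinement of Theorem \ref{EuclideanMainTheor}: rather than merely asserting the existence of a suitable $\delta$, I will trace the explicit constants through the chain of lemmas culminating in Lemma \ref{generallyUseful} and reassemble them into the stated $\varepsilon(\delta)$. The strategy is to split the defining curve into three pieces --- a left uncommon arc, the common matched interval $[0,L_\tau]$, and a right uncommon arc --- bound the contribution of each to $d(\Gamma, g\Gamma^*)$, and then combine them through the Hausdorff max.

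First I would record the curvature estimate already obtained in the proofs of Lemma \ref{tauBeta} and Lemma \ref{generallyUseful}: on the common interval $[0,L_\tau]$ we have $\beta(s) \le \kappa(s), \kappa^*(s) \le \tau(s)$, and since $\kappa$ is Lipschitz with constant $M = \max_{[0,L]}|\kappa'|$, this yields $|\kappa(s) - \kappa^*(s)| \le M(L_\tau\delta/m + \ell_\tau(1+\delta/m))$ for all $s \in [0,L_\tau]$, where I keep the sharper $L_\tau$ in place of $L$. I would then feed this into the explicit estimate inside Lemma \ref{closeCurva} --- namely $d(\Gamma, g\Gamma^*) \le (L')^2\delta'/2$ when the curvatures differ by at most $\delta'$ on a common domain of length $L'$ --- with $L' = L_\tau$ and $\delta'$ equal to the bound just derived. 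This produces precisely the term $L_\tau^2 M(\delta L_\tau/m + \ell_\tau(1+\delta/m))/2$.

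Next I would account for the unmatched arcs via Lemma \ref{wiggleRoom}, exactly as in the proof of Lemma \ref{uncommonDomain}: the left excess (present only when $\kappa^*(0) < \kappa(0)$) has arc length at most $\delta/m_1$, so each of its points lies within $\delta/m_1$ of the matched endpoint; and the right excess has length at most $|L_\tau - L_\beta|$, since the extended comparison function $\beta$ forces $L^* \le L_\beta$. Because the Hausdorff distance is the maximum of the one-sided suprema, I would combine the common-interval bound (to which the right excess adds $|L_\tau - L_\beta|$) against the left-excess bound $\delta/m_1$ inside a single $\max$, recovering $\varepsilon(\delta)$ verbatim.

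The final and most delicate step is verifying that $\varepsilon(\delta) = O(\delta)$, and this is where the hypothesis that $\Gamma$ has no vertices, i.e.\ $\kappa' \ne 0$, does the real work: it makes $\kappa^{-1}$ continuously differentiable, so that $\ell_\tau = \kappa^{-1}(\kappa(0)+\delta)/(1+\delta/m)$ satisfies $\ell_\tau = (\kappa^{-1})'(\kappa(0))\,\delta + o(\delta) = O(\delta)$ using $\kappa^{-1}(\kappa(0)) = 0$, and an identical expansion of $L_\tau$ and $L_\beta$ about $L$ gives $|L_\tau - L_\beta| = O(\delta)$. The remaining factors $\delta/m_1$ and $\delta L_\tau/m$ are manifestly $O(\delta)$ while $M$ and $L_\tau$ stay bounded, so every summand and the overall max are $O(\delta)$. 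I expect the main obstacle to be bookkeeping the $L_\tau$-versus-$L$ substitutions consistently and pinning down the \emph{linear rate} of $|L_\tau - L_\beta|$ rather than just its vanishing, since that refinement is exactly what the $C^1$ no-vertex assumption purchases.
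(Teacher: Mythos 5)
Your proposal is correct and follows essentially the same route as the paper's proof: the same three-piece decomposition (left excess bounded by $\delta/m_1$ because $g$ anchors the matched left endpoints exactly, the common interval $[0,L_\tau]$ fed through the explicit estimate of Lemma \ref{closeCurva} with the curvature bound $M(\delta L_\tau/m + \ell_\tau(1+\delta/m))$ coming from Lemma \ref{tauBeta}, and the right excess of length at most $|L_\tau - L_\beta|$ added onto the common-interval error inside the max). The only cosmetic difference is the $O(\delta)$ verification, where the paper bypasses your Taylor expansion of $\kappa^{-1}$ by substituting the cruder bounds $\ell_\tau \le \delta/(m(1+\delta/m))$ and the analogous expression for $L_\tau$, which the no-vertex condition $|\kappa'|\ge m$ already supplies; both yield the linear rate.
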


\includegraphics[width=\columnwidth]{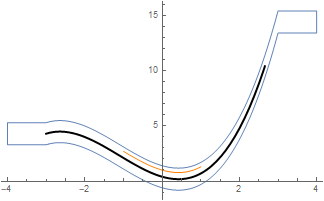}
\captionof{figure}{An example of two curves where one (orange) is contained in $T(S,\delta)$, but the Hausdorff distance between the two curves is greater than $\delta$.}

\medskip

\begin{proof}
We argue in two parts, fist parallel to lemma \ref{generallyUseful}, then lemma \ref{uncommonDomain}.
\\
In lemma \ref{generallyUseful}, we showed that there exists intervals $I=[x_1,y_1]\subset [0,L]$ and $I^*=[x_2,y_2]\subset [0,L^*]$ such that $y_1-x_1=y_2-x_2=L_\tau$ and for all $s\in [0,L_\tau],$
$$
|\kappa(s+x_1) - \kappa^*(s+x_2)|\leq \alpha_3(\delta).
$$
We note that $\alpha_3(\delta)=\alpha_1(\delta)$ as $\alpha_1(\delta)\geq \alpha_2(\delta).$
Further, we also have that
\begin{gather*}
    d(I,[0,L]) \leq |L_\tau - L|,\\
    d(I^*, [0,L^*]) \leq \max\{\delta/m_1, |L^* - L_\tau|\} \\
    \leq \max\{\delta/m_1, |L_\beta - L_\tau|\}.
\end{gather*}
If we now follow the proof of lemma \ref{uncommonDomain}, we see that we can choose $g\in SE(2)$ so that
\begin{gather*}
d(\Gamma. g\Gamma^*) \leq \max\{\max\{\delta/m_1,|L^* - L_\tau|\},\\
|L_\tau -L|\} +  L_\tau^2 M(\delta L_\tau/m + \ell_\tau(1+\delta/m))/2\},\\
\leq \max\{\delta/m_1,|L_\beta - L_\tau|\}\\
+  L_\tau^2 M(\delta L_\tau/m + \ell_\tau(1+\delta/m))/2.
\end{gather*}
As $L,L^*\leq L_\beta$. We can in fact do slightly better as the $g$ chosen is such that $\gamma(x_1) = \gamma^*(x_2)$, and so again, following the prior proof, we get,
\begin{gather*}
d(\Gamma. g\Gamma^*)\leq \max\{\delta/m_1,|L_\beta - L_\tau|
\\
+  L_\tau^2 M(\delta L_\tau/m + \ell_\tau(1+\delta/m))/2\}.
\end{gather*}
What is not immediately clear is what the order of this formula is. We can find an upper-bound for $\ell_\tau$ as we already have $m$ to be the minimum of $\kappa'$. Thus, to get from $\kappa(0)$ to $\kappa(0)+\delta$, it will be at most $\delta/m$ time, so we can replace $\ell_\tau$ with $\delta/(m(1+\delta/m))$. Similarly, we can replace $L_\tau$ with $(L-\delta/m)/(1+\delta/m) - \delta/(m(1+\delta/m))$ to completely remove inverses from the expression. $L_\beta$ does not involve inverses, so it does not provide any additional challenge computationally. With these substitutions, we see that our bound is $O(\delta)$.

\end{proof}
\newtheorem{remark}{Remark}
\begin{remark}
Note that for practical purposes, it may be best to choose the common point/tangent line not at the beginning of the curves $\Gamma,\Gamma^*$, but somewhere in their centers. The methodology above then would apply to considering the two halves of these curves as their deviations propagate moving away from this point.
\end{remark}

\includegraphics[width=\columnwidth]{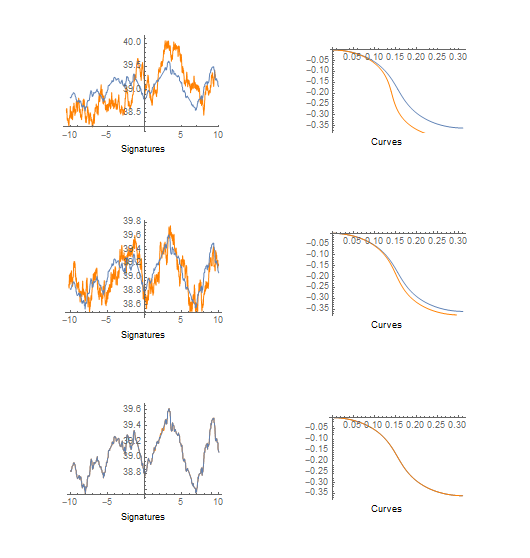}
\captionof{figure}{Here we take an initial blue signature, and take progressively closer orange signatures. As they get closer, we see that the defining curves also become closer.}

\medskip

\subsection{Interpreting Signatures through the $L^1$ metric}

It is clear that the argument above is sufficient but not necessary for two curves to be close together.

Small, sharp perturbations of a curve can cause large differences between signatures that are short-lived. As this example alludes to, perhaps an alternate approach to quantifying the distance between signatures could be more useful in this case. Intuitively, the area between two signatures seems like it should communicate the difference between curvatures, as the curvature is, in a sense, the accumulation of the $v$-coordinate. We begin by recalling lemma \ref{sigDEQ} which gives us a relationship between the signature as a graph of a function and as a phase portrait.

\includegraphics[width=\columnwidth]{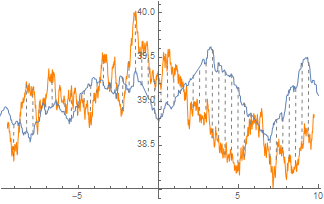}
\captionof{figure}{The area between two signatures.}

\medskip

\begin{lemma}
Let $F:[x,y]\to [m,M], F^*:[x,y]\to [m,M]$ be continuous, non-zero functions. For all $\varepsilon>0$, there exists $\delta>0$ so that, taking $u:[0,L]\to \mathbb{R},u^*:[0,L^*]\to \mathbb{R}$ such that
$$
(u(s), F(u(s))),\quad (u^*(s), F(u^*(s)))
$$ 
are in phase, 
$$
\int_x^y |F-F^*| < \delta \Longrightarrow |u(s) - u^*(s)|<\varepsilon
$$
for all $s\in[0,L-\varepsilon].$
\end{lemma}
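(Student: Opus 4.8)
The plan is to pass to the \emph{inverse} parameterizations, where the hypothesis on $\int_x^y|F-F^*|$ acts most directly, and then transfer the resulting estimate back to $u,u^*$ using that both are Lipschitz. Normalize so that $u(0)=u^*(0)=x$, and read the second phase portrait as the graph of $F^*$ (so $u^*$ solves $(u^*)'=F^*(u^*)$). By Lemma \ref{sigDEQ}, the in-phase parameterizations of the graphs of $F$ and $F^*$ have inverses given explicitly by $u^{-1}(w)=\int_x^w \frac{dt}{F(t)}$ and $(u^*)^{-1}(w)=\int_x^w \frac{dt}{F^*(t)}$. Since $F,F^*$ are continuous, non-zero, and take values in $[m,M]$, on the connected domain $[x,y]$ they stay on one side of $0$; assume as before that the signature lies in the upper half-plane, so that $0<m\le F,F^*\le M$.

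First I would bound the two inverses in the sup norm. For every $w\in[x,y]$,
$$|u^{-1}(w)-(u^*)^{-1}(w)| = \left|\int_x^w \frac{F^*(t)-F(t)}{F(t)F^*(t)}\,dt\right| \le \frac{1}{m^2}\int_x^y|F-F^*| < \frac{\delta}{m^2}.$$
In particular, evaluating at $w=y$ gives $|L-L^*|<\delta/m^2$, since $L=u^{-1}(y)$ and $L^*=(u^*)^{-1}(y)$.

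Next I would convert this into the desired bound on $u,u^*$. Because $u'(s)=F(u(s))\le M$, and likewise for $u^*$, both parameterizations are Lipschitz with constant $M$. Fix $s\in[0,L-\varepsilon]$ and set $w:=u(s)$, so that $u^{-1}(w)=s$ and $u^*((u^*)^{-1}(w))=w$. Then
$$|u(s)-u^*(s)| = |u^*((u^*)^{-1}(w)) - u^*(s)| \le M\,|(u^*)^{-1}(w)-s| = M\,|(u^*)^{-1}(w)-u^{-1}(w)| < \frac{M\delta}{m^2}.$$
The one subtlety is that this computation requires $s$ to lie in the domain $[0,L^*]$ of $u^*$; this is exactly where the restriction $s\le L-\varepsilon$ is used, since $|L-L^*|<\delta/m^2$ gives $s\le L-\varepsilon < L^*$ as soon as $\delta/m^2<\varepsilon$. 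Choosing $\delta<\varepsilon m^2/\max\{M,1\}$ forces both $M\delta/m^2<\varepsilon$ and $\delta/m^2<\varepsilon$, which finishes the argument.

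The main obstacle is this final transfer step: uniform closeness of the inverses does not by itself imply closeness of the functions, unless one controls how flat the functions can be, which here is supplied precisely by the uniform Lipschitz bound $M$ coming from the codomain $[m,M]$. I expect the only genuinely delicate point to be the bookkeeping of domains — guaranteeing $s$ stays inside $[0,L^*]$ — and it is resolved exactly by the $[0,L-\varepsilon]$ truncation together with the bound on $|L-L^*|$ derived above.
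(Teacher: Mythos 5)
Your proposal is correct and follows essentially the same route as the paper's proof: both bound $|u^{-1}-(u^*)^{-1}|$ by $\delta/m^2$ via the Lipschitz constant $m^{-2}$ of $t\mapsto 1/t$ applied to $\int_x^y|F-F^*|$, and then transfer this to $|u(s)-u^*(s)|\le M\delta/m^2$ using the Lipschitz bound $M$ on $u,u^*$. If anything, your write-up is slightly more careful than the paper's at the transfer step (the identity $u(s)=u^*((u^*)^{-1}(w))$) and in justifying why $s\le L-\varepsilon$ keeps $s$ inside the domain $[0,L^*]$, which the paper only treats implicitly.
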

\begin{proof}
Assume without loss of generality that $0<m<M$. By lemma \ref{sigDEQ}, we see that, for all $s\in[0,L],$
\begin{gather*}
|u^{-1}(s) - (u^*)^{-1}(s)| = |\int_x^t \frac{1}{F} - \frac{1}{F^*}|\\
\leq \int_x^t |\frac{1}{F} - \frac{1}{F^*}| \leq \int_x^t m^{-2}|F-F^*| \leq \frac{\delta}{m^2},
\end{gather*}
as $1/x$ has Lipschitz constant $m^{-2}.$ In other words, if $s=u^{-1}(s)$, then for $s^*=(u^*)^{-1}(s)$, $s-\delta/m^2 < s^* < s+\delta/m^2$, so that $u^*$ reached the value $t$ in at most $\delta/m^2$ ``time" before $u$, or $\delta/m^2$ after $u$. As $F,F^*$ are the derivatives of $u,u^*$, we see that $\max\{|u'|,|(u^*)'|\} \leq M$, thus
$$
|u(s) - u^*(s)|\leq \delta \frac{M}{m^2} <\varepsilon,
$$
for $\delta<\varepsilon m^2/M.$ The claim in regard to domains comes from the inequality
$$
|u^{-1}(y) - (u^*)^{-1}(y)|<\varepsilon,
$$
so that $|L^*-L|<\varepsilon.$
\end{proof}

\begin{corollary}\label{areaDist}
Let $F:[x,y]\to [m,M], F^*:[x,y]\to [m,M]$, $0<|m|<|M|$, be continuous, non-zero functions. Then
$$
\int_x^y |F-F^*| < \delta \Longrightarrow |u(s) - u^*(s)|\leq \delta M/m^2
$$
for all $s\in[0,L-\varepsilon].$
\end{corollary}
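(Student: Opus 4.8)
The plan is to observe that this corollary is nothing more than the explicit quantitative reading of the preceding lemma: its proof already derived the constant $\delta M/m^2$ en route to the qualitative $\varepsilon$-statement, so here I simply retrace those steps and stop at the explicit bound rather than forcing it below a prescribed $\varepsilon$. First I would invoke Lemma \ref{sigDEQ} to write the inverse parameterizations explicitly as $u^{-1}(s) = \int_x^{t} \tfrac{1}{F}$ and $(u^*)^{-1}(s) = \int_x^{t} \tfrac{1}{F^*}$, where $t$ denotes the common value $u(s)=u^*(s^*)$ being reached. Taking the difference and pulling the absolute value inside the integral gives $|u^{-1}(s) - (u^*)^{-1}(s)| \le \int_x^y \bigl|\tfrac{1}{F} - \tfrac{1}{F^*}\bigr|$.

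The key estimate is the Lipschitz bound on the reciprocal map. Since $F,F^*$ take values in $[m,M]$ with $0<|m|$, the function $w\mapsto 1/w$ has Lipschitz constant $m^{-2}$ on this range, so $\bigl|\tfrac{1}{F}-\tfrac{1}{F^*}\bigr| \le m^{-2}|F-F^*|$ pointwise. Integrating and applying the hypothesis $\int_x^y|F-F^*|<\delta$ yields $|u^{-1}(s)-(u^*)^{-1}(s)| \le \delta/m^2$. Interpreting this exactly as in the preceding lemma, $u^*$ reaches any prescribed value within $\delta/m^2$ of the ``time'' at which $u$ reaches it.

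Finally I would convert this bound on the inverses into a bound on $u,u^*$ themselves. Because $F,F^*$ are precisely the derivatives of $u,u^*$ and are bounded above in magnitude by $M$, we have $\max\{|u'|,|(u^*)'|\} \le M$; composing the time-displacement estimate with this speed bound gives $|u(s)-u^*(s)| \le M\cdot \delta/m^2 = \delta M/m^2$, which is the claim. I expect no genuine obstacle here, since every ingredient is already assembled in the preceding lemma; the only points requiring minor care are the sign conventions (the corollary permits $0<|m|<|M|$, so one reduces to the positive case $0<m<M$ by the same \emph{without loss of generality} reduction used previously) and checking that the speed-bound step is uniform in $s$ across the stated domain $[0,L-\varepsilon]$, which follows because the bound $M$ on $|u'|,|(u^*)'|$ is global.
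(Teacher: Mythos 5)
Your proposal is correct and matches the paper exactly: the paper gives no separate proof of this corollary, since the explicit bound $\delta M/m^2$ is already derived inside the proof of the preceding lemma (via the Lipschitz constant $m^{-2}$ of the reciprocal map applied to the inverse parameterizations from Lemma \ref{sigDEQ}, followed by the speed bound $M$ on $|u'|,|(u^*)'|$), and your argument retraces those same steps and simply stops at the quantitative estimate. No gaps; your added remarks about the sign reduction and the uniformity of the bound in $s$ are consistent with the paper's treatment.
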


From this, the previous results can be worked out in a similar way to extend to curves that are not exactly over each other. This alternate viewpoint can be very useful when approaching more noisy signatures that might have smaller, more dramatic deviations in the estimated signatures. 
\begin{corollary}
Let $F:[x_1,y_1]\to [m,M], F^*:[x_2,y_2]\to [m,M]$, $0<|m|<|M|$, be continuous, non-zero functions, $\max\{x_1,x_2\}<\min\{y_1,y_2\}$. Let $\sigma(s)=(u(s), F(u(s)))$, $\sigma^*$ defined similarly be the graphs of two euclidean signatures. Then there exists $ g\in SE(2)$ such that
{\small
\begin{gather*}
\int_{\max\{x_1,x_2\}}^{\min\{y_1,y_2\}} |F-F^*| < \delta \Longrightarrow d(\Gamma, g\Gamma^*) \leq\\
\max\{|x_1-x_2|/m, |y_1-y_2|/m \\
+\delta M |\max\{x_1,x_2\} - \min\{y_1,y_2\}|/2m^3 + M\delta/m^2\}.
\end{gather*}}
\end{corollary}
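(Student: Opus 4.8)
The plan is to mirror the structure of Lemma \ref{uncommonDomain}, replacing its sup-norm tube hypotheses with the $L^1$ estimate supplied by Corollary \ref{areaDist}. First I would pass to the common curvature range $[a,b] := [\max\{x_1,x_2\},\min\{y_1,y_2\}]$, on which both $F$ and $F^*$ are defined and the single hypothesis $\int_a^b |F-F^*| < \delta$ holds. On this range Corollary \ref{areaDist} applies directly: writing $u,u^*$ for the in-phase parameterizations (the curvatures as functions of arc length, produced by Lemma \ref{sigDEQ}), it yields both that the two curves reach any shared curvature value at arc-length parameters differing by at most $\delta/m^2$, and that $|u(s)-u^*(s)| \le \delta M/m^2$ on the overlapping arc-length interval. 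This reduces the problem to the already-understood situation of two curvature functions that are uniformly close on a shared interval, with a quantitatively controlled mismatch at the endpoints.

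Next I would fix the Euclidean motion $g$ as in Lemma \ref{closeCurva}, translating and rotating $\Gamma^*$ so that $g\Gamma^*$ agrees with $\Gamma$ in position and unit tangent at the arc-length point where the curvature first equals $a$, the left end of the overlap. With this alignment the estimate on the overlap is a transcription of the computation in Lemma \ref{closeCurva}: since the angle functions are antiderivatives of the curvatures, $|\theta-\theta^*|$ is controlled by $\int|u-u^*|$, and integrating $|e^{i\theta}-e^{i\theta^*}|$ bounds $|\gamma-g\gamma^*|$ over the overlap. The arc length of the overlap is at most $|b-a|/m = |\max\{x_1,x_2\}-\min\{y_1,y_2\}|/m$, because $|F|,|F^*|\ge m$ forces the curvature to sweep through $[a,b]$ at speed at least $m$; inserting the curvature bound $\delta M/m^2$ and this length into the $L^2\delta/2$-type estimate of Lemma \ref{closeCurva} produces the summand $\delta M|\max\{x_1,x_2\}-\min\{y_1,y_2\}|/2m^3$, with the residual $M\delta/m^2$ accounting for the $\delta/m^2$ parameter offset between the two curves at the right end of the overlap.

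It then remains to control the two ``tail'' pieces where only one signature is defined, namely the curvature intervals $[\min\{x_1,x_2\},a]$ and $[b,\max\{y_1,y_2\}]$. Here I would use the speed-at-least-$m$ observation again: traversing the left tail costs arc length at most $|x_1-x_2|/m$ and the right tail at most $|y_1-y_2|/m$, and Lemma \ref{wiggleRoom} then guarantees that every point of each tail stays within that same arc length of the corresponding aligned endpoint. Because the alignment sits at the left end of the overlap, the left tail extends from a point where the two curves coincide and contributes only $|x_1-x_2|/m$, whereas the right tail is appended to the already-accumulated overlap error. Feeding these three bounds into the triangle inequality, exactly as at the close of Lemma \ref{uncommonDomain}, gives the asserted maximum of $|x_1-x_2|/m$ against the sum of the right-tail and overlap contributions.

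The step I expect to be most delicate is bookkeeping the two-sided endpoint mismatch consistently. Since $\Gamma$ and $g\Gamma^*$ reach the shared curvature values $a$ and $b$ at different arc-length parameters, I must ensure the overlap interval is defined compatibly for both curves after reparameterization, and that the $\delta/m^2$ offset is not double-counted when it resurfaces as the $M\delta/m^2$ summand. The Hausdorff (rather than pointwise) conclusion also requires checking both inclusion directions — that every point of $\Gamma$ lies near $g\Gamma^*$ and conversely — which is exactly where Lemma \ref{wiggleRoom} does the real work, since the tail points have no genuine counterpart on the opposite curve.
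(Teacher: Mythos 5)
Your proposal follows the paper's own proof essentially step for step: the same alignment of $g$ at the left end of the common curvature range $[\max\{x_1,x_2\},\min\{y_1,y_2\}]$, the same use of Corollary \ref{areaDist} to get the pointwise bound $\delta M/m^2$ and the $\delta/m^2$ parameter offset, the same speed-at-least-$m$ bound of $|x_1-x_2|/m$ and $|y_1-y_2|/m$ on the tails via Lemma \ref{wiggleRoom}, and the same combination with Lemma \ref{closeCurva} over a common domain of length at most $|\max\{x_1,x_2\}-\min\{y_1,y_2\}|/m$ to produce the middle summand and the trailing $M\delta/m^2$. It is correct in exactly the sense the paper's argument is, so nothing further is needed.
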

\begin{proof}
Let $x:=\max \{x_1, x_2\}, y:=\min\{y_1,y_2\}$. We want to choose our g to align the points $\gamma(x),\gamma^*(x)$. Thus, parallel to lemma \ref{uncommonDomain}, the portions of the signatures between $x_1,x_2$ and $y_1,y_2$ can contribute at most $|x_1-x_2|/m$ and $|y_1-y_2|/m$ additional arc length respectively, and we can account for them as we did in corollary \ref{ExpBound}.\\
By corollary \ref{areaDist}, on the common domain of the in phase parameterizations of these signatures, $|u(s) - u^*(s)|<\delta M/m^2$. This common domain has maximal length $|\max\{x_1,x_2\} - \min\{y_1,y_2\}|/m$, so we can combine this with lemma \ref{closeCurva} to get the factor of $\delta M |\max\{x_1,x_2\} - \min\{y_1,y_2\}|/2m^3$ added to our bound.\\
Our final consideration is that the common domain of the two in phase parameterizations may not be the entire interval $[\max\{x_1,x_2\}, \min\{y_1,y_2\}]$, indeed, we only know that 
{\small
$$
|u^{-1}(\min\{y_1,y_2\}) - (u^*)^{-1}(\min\{y_1,y_2\})|<\delta /m^2.
$$
}
so there can be an additional length of $M\delta /m^2$ added to the error, as argued before.
\end{proof}
Observe that this formula is on the order of $1/m^3$, while the previous only depends on $1/m^2$, so while it can account for these small, large deviations, it may be weaker for signatures closer to the $u$-axis.

\subsection{Affine Signatures}

 Shifting focus, we can argue in parallel to the above for invariants under the Affine group $GL_n(\mathbb{R}) \ltimes \mathbb{R}^2.$ Here, there is a different notion of arc length and curvature which are invariant under this alternate group action. The affine arc length, $\alpha$, is given by the formula
 $$
 \alpha(s) = \int_0^t \sqrt{\kappa(s)}dt
 $$
 and likewise the affine curvature is given by
 $$
 \frac{1}{\kappa^{3/2}}\frac{d}{d\alpha}.
 $$
 As in the Euclidean case, this allows us to define the Affine signature to be the set $S:=\{\mu(\alpha), \mu_\alpha(\alpha)\}$. These functions are related to the Affine frame, $A(\alpha),$
 $$
 A(\alpha) = 
 \begin{bmatrix}
 T(\alpha)\\
 N(\alpha)
 \end{bmatrix}
 $$
 where $T,N$ are the tangent and normal vectors of the curve at $\gamma(\alpha)$ with derivative being taken with respect to the affine parameterization. Taking the Cartan matrix of $A(\sigma)$ to be
$$
C(A(\sigma)) = K(\sigma) = A'(\sigma)A^{-1}(\sigma),
$$
we get
$$
K(\alpha) = 
\begin{bmatrix}
0 & 1\\
\mu(\alpha) & 0
\end{bmatrix}
$$
where $\mu$ is the affine curvature. We further define $|A|:=\max_{i,j\in [n]}|a_{i,j}|$, which is the sup-norm metric, and observe that for all $\varepsilon>0$, there exists $\delta>0$ so that
$$
|\mu(\alpha) - \mu^*(\alpha)|<\delta \Longrightarrow |K(\sigma) - K^*(\sigma)| < \varepsilon,
$$
that is, if two curves have close affine curvature with respect to affine arc length then these cartan matrices will be close. In particular, it suffices to take $\delta = \varepsilon.$ Another useful observation will be that for $K\in gl_n(\mathbb{C})$ if $|K|<M$, then
$$
|K^m|<(Mn)^m.
$$
It has been established how we can reconstruct a curve from its euclidean curvature, but the following lemma from \cite{Guggenheimer} shows us how this can be done  more generally via Picard Iterations.
\begin{lemma}\label{PicardIterations}
Let $K(s)$ be a continuous matrix function, $s_0\leq s \leq s_1$. For any $s^*,$ $s_0\leq s^* \leq s_1$, there exists an interval about $s^*$ and a nonsingular matrix function $A(s)$ definedin the interval such that $K(s) = C(A(s)),$ $A(s^*) = U$ where $U$ is a generic nonsingular matrix.
\end{lemma}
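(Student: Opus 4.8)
The plan is to recognize that the Cartan condition $K(s) = C(A(s)) = A'(s)A^{-1}(s)$ is, after multiplying on the right by $A(s)$, exactly the linear matrix ODE $A'(s) = K(s)A(s)$ with initial condition $A(s^*) = U$. As the lemma's name advertises, I would establish existence by Picard iteration applied to the equivalent integral equation
$$
A(s) = U + \int_{s^*}^s K(t)A(t)\,dt,
$$
setting $A_0(s):=U$ and $A_{N+1}(s):= U + \int_{s^*}^s K(t)A_N(t)\,dt$, and then showing this sequence converges uniformly to a solution.

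First I would fix the estimates. Since $K$ is continuous on the compact interval $[s_0,s_1]$, its sup-norm is bounded by some constant $M$. Using the multiplicative bound for the sup-norm (the same inequality underlying the estimate $|K^m|<(Mn)^m$ quoted just before the lemma), namely $|AB|\le n|A||B|$ where $n$ is the matrix dimension, I would prove by induction that
$$
|A_{N+1}(s)-A_N(s)| \le |U|\,\frac{(nM)^{N+1}|s-s^*|^{N+1}}{(N+1)!}.
$$
The base case is the direct bound on $\int_{s^*}^s K(t)U\,dt$, and the inductive step feeds the previous estimate through the integral, picking up one factor of $nM$ and one of $|s-s^*|/(N+1)$. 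Because these differences are dominated by the convergent series summing to $|U|\bigl(e^{nM|s-s^*|}-1\bigr)$, the telescoping sequence $A_N$ converges uniformly on all of $[s_0,s_1]$ to a continuous limit $A(s)$. Passing the uniform limit through the integral shows $A$ satisfies the integral equation, hence is $C^1$ and solves $A'=KA$ with $A(s^*)=U$.

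It then remains to verify nonsingularity and recover the Cartan condition. For nonsingularity I would invoke continuity of the determinant: $\det A(s)$ is continuous and equals $\det U\neq 0$ at $s^*$, so it stays nonzero on some interval about $s^*$, which is precisely what the statement requires. (Abel's formula $\det A(s) = \det U \cdot \exp\int_{s^*}^s \operatorname{tr}K(t)\,dt$ in fact forces nonsingularity on the whole interval, stronger than needed, and would let me drop ``about $s^*$'' altogether.) On that interval $A^{-1}$ exists, and $C(A(s)) = A'(s)A^{-1}(s) = K(s)A(s)A^{-1}(s) = K(s)$, as claimed, with the freedom to take $U$ any nonsingular matrix built directly into the initial condition.

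I do not anticipate a genuine obstacle here, since this is the classical existence theorem for linear systems; the only place demanding care is the bookkeeping in the norm estimates, specifically tracking the dimension factor $n$ that enters the sub-multiplicative inequality for the sup-norm, so that the dominating series is correctly identified as exponential rather than inadvertently losing the factorial in the denominator.
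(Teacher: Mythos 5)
Your proposal takes essentially the same route as the paper: both convert $K = A'A^{-1}$ into the linear ODE $A' = KA$ with $A(s^*) = U$, solve it by Picard iteration on the integral equation $A(s) = U + \int_{s^*}^s K(\sigma)A(\sigma)\,d\sigma$, and establish uniform convergence via the same factorial estimate dominated by the exponential series (the paper's bound is $|A_j(s) - A_{j-1}(s)| \leq n^{j-1}M^j|s-s^*|^j/j!$, compared against the Taylor expansion of $e^{nM|s-s^*|}/n$). Your two additions—carrying the $|U|$ factor through the base case, and explicitly verifying nonsingularity of the limit via continuity of $\det A(s)$ (or Abel's formula)—are refinements of points the paper's proof glosses over, not a different method.
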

\begin{proof}
We observe that
$$
A'A^{-1} = K \Longleftrightarrow A' = KA,
$$
so it suffices to solve the latter with the given initial condition. We iteratively define
$$
A_0 = U,\quad A_j(s) = U + \int_{s^*}^s K(\sigma)A_{j-1}(\sigma) d\sigma
$$
where the integration is element-wise. $K$ is continuous on $[s_0, s_1]$, so it achieves a maximum $M:=\max_{s\in [s_0,s_1]} |K(s)|$. We now argue by induction that
$$
|A_j(s) - A_{j-1}(s)| \leq n^{j-1}M^j\frac{|s-s^*|^j}{j!}.
$$
For our base case, we have that
$$
|A_1(s) - A_0| \leq \int_{s^*}^s|K(\sigma)| |d\sigma| \leq M|s-s^*|.
$$
Assuming the inductive hypothesis, we get
{\footnotesize
\begin{align*}
    |A_{j+1}(s) - A_j(s)| &= |\int_{s^*}^sK(\sigma)(A_j(\sigma) - A_{j-1}(\sigma))d\sigma |\\
    &\leq nMn^{j-1}M^j \int_{s^*}^s \frac{|\sigma - s^*|^n}{n!}\\
    &= n^j M^{j+1}\frac{|s-s^*|^{n+1}}{(n+1)!}.
\end{align*}
}
As this is the $(n+1)$th term in the taylor expansion of $e^{nM|s-s^*|}/n$, it tends to zero with increasing $j$, as does $|A_i - A_j|$ for $i>j$. Therefore, $A(s)=\lim_{j\to\infty}A_j(s)$ is our solution, and satisfies the equation
$$
A(s) = U + \int_{s^*}^s K(\sigma) A(\sigma) d\sigma.
$$
We also observe that this convergence is uniform as $|s-s^*| \leq |s_0 - s_1|.$
\end{proof}

Before proceeding any further, we will now outline the main steps in our argument. For any $\varepsilon>0$ and affine signatures $S,S^*$, we want to find $\delta_j>0$ and $g$ in the affine group so that,

\begin{gather}
    d(S, S^*) < \delta_1 \Longrightarrow |\mu(\alpha) - \mu^*(\alpha)| <\delta_2\\
    \Longrightarrow |K(\alpha) - K^*(\alpha)|<\delta_3\\
    \Longrightarrow |A(\alpha) - A^*(\alpha)| <\delta_4\\
    \Longrightarrow d(\Gamma, g\Gamma^*) < \varepsilon
\end{gather}
where the first three inequalities are only valid on the common domain of $\mu,\mu^*$, and the remaining domain is arbitrarily small. Inequality $(1)$ was already shown generally for phase portraits that do not intersect the $u$-axis. $(2)$ was already discussed, so what remains is to show (3) and (4).\\
We start now with acquiring the necessary tools to go from (2) to (3). What we want to do first is bound the rate of convergence of the previously discussed Picard Iterations. From now on, we will use the notation $K_j$ rather than $A_j$ to clearly indicate to which matrix the picard iterations correspond. We prove the following for general matrix functions.
\begin{lemma} \label{uniformBound1}
Let $|M(\sigma)|$ be any continuous matrix function on $[0,L]$. Then for all $\varepsilon>0$, there exists $N>0$ where for all $K(\sigma)$ on this interval, $|K(\sigma)|\leq |M(\sigma)|$, if $j>N$, $|K_j(\sigma) - A_K(\sigma)|< \varepsilon$.
\end{lemma}
\begin{proof}
Let $M$ be the max of $M(\sigma)$, so that for any such $K(\sigma)$, $|K(\sigma)|\leq M$. Then by lemma \ref{PicardIterations}, we see that
$$
|K_j(s) - K_{j-1}(s)| \leq n^{j-1}M^j\frac{|s-s^*|^j}{j!}=:\alpha_j(s).
$$
We recall that this corresponds to the $(n+1)$th term of the taylor expansion of $e^{nM|s-s^*|}/n$, so the sum of these terms converges, and for any $\varepsilon>0$, there exists $N>0$ so that for $j_1,j_2\geq N$,
$$
|K_{j_1}(s) - K_{j_2}(s)| \leq \sum_{i=j_1}^{j_2} \alpha_i(s) <\varepsilon/2.
$$
In particular, this tells us that the sequence\\ $(K_j)_{j=N}^{\infty} \subset (K_N)_{\varepsilon/2}$, and so $|A_K - K_j|\leq \varepsilon/2 <\varepsilon$, as desired.
\end{proof}
\begin{lemma} \label{uniformBound2}
Let $|M(\sigma)|$ be any continuous matrix function on a compact interval. Then there exists $M>0$ such that for any $|K(\sigma)|\leq |M(\sigma)|$ defined on a domain $[0,L]$, for all $j$, $K_j(\sigma) \leq M$.
\end{lemma}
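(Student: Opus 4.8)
The plan is to read off the uniform bound directly from the telescoping estimate already proved in Lemma \ref{PicardIterations}, exploiting the fact that the only data that estimate depends on are the dimension $n$, the length of the interval, and an upper bound on $\max_\sigma |K(\sigma)|$. First I would set $\bar M := \max_\sigma |M(\sigma)|$, which exists and is finite because $|M(\sigma)|$ is continuous on a compact interval. The crucial observation is that every admissible $K$ satisfies $|K(\sigma)| \le |M(\sigma)| \le \bar M$ pointwise, so $\max_\sigma |K(\sigma)| \le \bar M$ for \emph{all} such $K$ at once. This is what makes a single constant work uniformly across the entire family of matrix functions dominated by $M(\sigma)$.

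Next I would invoke the per-step estimate from Lemma \ref{PicardIterations}, namely $|K_i(s) - K_{i-1}(s)| \le n^{i-1}(\max|K|)^i|s-s^*|^i/i!$. Since this bound is monotone increasing in $\max|K|$ and in $|s-s^*|$, I may replace $\max|K|$ by $\bar M$ and $|s-s^*|$ by $L$ (the length of the compact interval, which dominates every subdomain) to get a bound independent both of the particular $K$ and of $s$. The triangle inequality applied to the telescoping sum $K_j = U + \sum_{i=1}^{j}(K_i - K_{i-1})$ then yields
\[
|K_j(s)| \le |U| + \sum_{i=1}^{j} |K_i(s) - K_{i-1}(s)| \le |U| + \sum_{i=1}^{j} \frac{n^{i-1}\bar M^i L^i}{i!}.
\]

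Finally I would bound the partial sum by the full series, recognizing it as the exponential tail already summed in Lemma \ref{uniformBound1}:
\[
\sum_{i=1}^{\infty} \frac{n^{i-1}\bar M^i L^i}{i!} = \tfrac{1}{n}\left(e^{n\bar M L} - 1\right).
\]
Setting $M := |U| + \tfrac{1}{n}(e^{n\bar M L} - 1)$ then gives $|K_j(\sigma)| \le M$ for every iterate index $j$, every $\sigma$, and every admissible $K$, as desired. The one point requiring genuine care — and really the only substantive step — is the uniformity observation of the first paragraph: the final constant must never reference the $K$-specific maximum, so it is essential to dominate every $K$ by the single constant $\bar M$ before summing. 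Everything downstream is a direct reuse of estimates already in hand, so I expect no obstacle beyond this bookkeeping; in particular, the uniformity over $j$ is free, since the bound is the sum of a convergent series and hence its partial sums are all controlled by the total.
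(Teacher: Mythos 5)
Your proof is correct and follows essentially the same route as the paper: both bound the iterates by telescoping the per-step Picard estimate from Lemma \ref{PicardIterations} and summing the resulting exponential series, with the uniformity coming from dominating every admissible $K$ by the single constant $\bar M := \max_\sigma |M(\sigma)|$. If anything, your version is slightly more careful than the paper's: the paper anchors its telescoping estimate at $K(\sigma)$ itself (writing $|K_j(\sigma)-K(\sigma)|\leq \sum_{i=1}^j \alpha_i(s)$, which appears to be a slip, since the telescoping sum naturally starts from $K_0=U$), whereas you correctly anchor it at the initial condition $U$, arriving at the bound $|U| + \frac{1}{n}\left(e^{n\bar M L}-1\right)$ in place of the paper's $M_1 + e^{nM_1 L}/n$.
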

\begin{proof}
By lemma \ref{PicardIterations}, we have that, taking $M_1\geq |M(\sigma)|$,
$$
|K_j(\sigma) - K(\sigma)| \leq \sum_{i=1}^j \alpha_i(s) \leq e^{nM_1L}/n.
$$
Thus
$$
|K_j(\sigma)| \leq M_1 + e^{nM_1L}/n =:M
$$
as desired.
\end{proof}
\begin{lemma}
Let $K(\sigma)$ be a continuous matrix function. For all $\varepsilon>0$, there exists $\delta>0$ so that $|K^*(\sigma)-K(\sigma)|<\delta$ implies $|A_K(\sigma) - A_{K^*}(\sigma)|<\varepsilon.$ 
\end{lemma}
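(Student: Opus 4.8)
The plan is to show that the Picard iterates depend continuously on the driving matrix $K$, and then to pass the closeness of the iterates to the closeness of the limits $A_K, A_{K^*}$ using the uniform convergence already established. The key observation is that Lemmas \ref{uniformBound1} and \ref{uniformBound2} let me reduce the problem to controlling a single, fixed iterate $K_N$: since both $A_K$ and $A_{K^*}$ are limits of their respective Picard sequences with a convergence rate that is uniform over all driving matrices bounded by a common $|M(\sigma)|$, I can truncate at a height $N$ that works simultaneously for both.

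First I would fix $\varepsilon>0$ and apply Lemma \ref{uniformBound2} to the fixed continuous matrix function $K(\sigma)$ to produce a bound $M_1$; then, shrinking the allowable $\delta$ so that $|K^*-K|<\delta\leq 1$ forces $|K^*(\sigma)|\leq |K(\sigma)|+1 =: |M(\sigma)|$, both $K$ and $K^*$ are dominated by the same continuous bound. This is the step that lets me invoke the uniform results for \emph{both} sequences at once. Next I would apply Lemma \ref{uniformBound1} with threshold $\varepsilon/3$ to obtain an $N$ so that $|K_j - A_K|<\varepsilon/3$ and $|K^*_j - A_{K^*}|<\varepsilon/3$ for all $j\geq N$. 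It then suffices, by the triangle inequality $|A_K - A_{K^*}|\leq |A_K - K_N| + |K_N - K^*_N| + |K^*_N - A_{K^*}|$, to make the middle term $|K_N - K^*_N|<\varepsilon/3$ by choosing $\delta$ small.

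The middle step is the genuine content, and I expect it to be the main obstacle. I would prove by induction on $j$ that $|K_j(s) - K^*_j(s)|$ is controlled by $\delta$, with a constant depending only on $N$, $n$, $M_1$, and $L$. The inductive step hinges on the identity
$$
K_j - K^*_j = \int_{s^*}^s\bigl(K(\sigma)K_{j-1}(\sigma) - K^*(\sigma)K^*_{j-1}(\sigma)\bigr)\,d\sigma,
$$
whose integrand I would split as $K(K_{j-1}-K^*_{j-1}) + (K-K^*)K^*_{j-1}$. The first piece propagates the previous difference, multiplied by the factor $nM_1$ coming from the sub-multiplicativity bound $|AB|\leq n|A||B|$; the second piece contributes $n\delta$ times the uniform bound on $|K^*_{j-1}|$ supplied by Lemma \ref{uniformBound2}. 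Integrating over an interval of length at most $L$ yields a recursion of the form $e_j \leq nM_1 L\, e_{j-1} + C\delta$ with $e_0=0$, which solves to a bound $e_N \leq \delta\cdot P(N,nM_1L)$ for an explicit polynomial-in-exponential factor $P$. Since $N$ is now fixed, this factor is a constant, so taking $\delta < \varepsilon/(3P)$ (and also $\delta\leq 1$) forces $|K_N - K^*_N|<\varepsilon/3$ and completes the argument. The only subtlety to watch is that the iterates share the common initial condition $U$, so the $j=0$ difference vanishes and no boundary term survives.
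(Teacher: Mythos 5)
Your proposal is correct and follows essentially the same route as the paper's proof: an induction on the Picard iterates using the splitting $KK_{j-1}-K^*K^*_{j-1} = K(K_{j-1}-K^*_{j-1}) + (K-K^*)K^*_{j-1}$ with the uniform bounds of Lemmas \ref{uniformBound1} and \ref{uniformBound2}, followed by the three-term triangle inequality through a fixed truncation level $N$. If anything, your explicit recursion $e_j \leq nM_1L\,e_{j-1} + C\delta$ is a more careful bookkeeping of the constant than the paper's inductive step, but the argument is the same.
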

\begin{proof}
We first show that for all $\varepsilon>0$, all $N>0$, there exists $\delta>0$ so that $|K_j(\sigma)- K_j^*(\sigma)|<\varepsilon$ given the same initial condition $U$. We verify this by induction. The base case is trivial as 
$$
|K_0 - K^*_0| = |U - U| = 0.
$$
Assume the hypothesis for $j$, and take $\delta<1$. Then by lemma \ref{uniformBound2}, we can take $M>|K|,|K^*_j|$ so that
\begin{gather*}
   | K_{j+1}(s) - K_{j+1}^*(s)| \\
   = |\int_{s^*}^s K(\sigma) K_j(\sigma) - K^*(\sigma)K_j^*(\sigma)d\sigma|\\
   = |\int_{s^*}^s K(\sigma) K_j(\sigma) - K(\sigma) K_j^*(\sigma)\\
   + K(\sigma) K_j^*(\sigma) - K^*(\sigma)K_j^*(\sigma)d\sigma|\\
   \leq \int_{s^*}^s | K(\sigma)(K_j(\sigma) - K_j^*(\sigma))|d\sigma\\
   + \int_{s^*}^s | K_j^*(\sigma)(K(\sigma) - K^*(\sigma))|d\sigma\\
   \leq |s-s^*| 2M n \varepsilon < L 2M n \varepsilon
\end{gather*}
As this can be made arbitrarily small, we have shown the first claim. By the initial choice of $\delta$, we can further restrict it, and apply lemma \ref{uniformBound1} and the above to get $N>0$ so that
\begin{gather*}
    |A_K(s) - A_{K^*}(s)|\\
    = |A_K(s) - K_N(s) + K_N(s) - K_N^*(s)\\
    + K_N^*(s) - A_{K^*}(s)|\\
    \leq |A_K(s) - K_N(s)| + | K_N(s) - K_N^*(s)|\\
    + |K_N^*(s) - A_{K^*}(s)| < 3\varepsilon,
\end{gather*}
as desired. 
\end{proof}
We now have the desired relationship between $A_K$ and $A_{K^*}$ on their common domain, and because we will always be taking curvatures from close signatures, the remaining domain will be small (lemma \ref{generallyUseful}). In the following, we show what happens on the common domain, and then approach the remaining domain.

\begin{lemma}
For all $\varepsilon>0$, there exists $\delta>0$ $|A_K(\alpha)-A_{K^*}(\alpha)|<\delta$ for all $\alpha\in [0,L]$, then $d(\Gamma, g\Gamma)<\varepsilon$ for some $g$ in the affine group.
\end{lemma}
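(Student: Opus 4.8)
The plan is to mirror the Euclidean reconstruction argument of Lemma \ref{closeCurva}, with the affine tangent vector read off from the frame playing the role that $e^{i\theta}$ played before. Recall that the first row of the affine frame $A(\alpha)$ is the affine tangent $T(\alpha) = \gamma'(\alpha)$, where the derivative is taken with respect to the affine arc length. Hence the defining curve is recovered by integration, $\gamma(\alpha) = \gamma(0) + \int_0^\alpha T(t)\,dt$, and similarly $\gamma^*(\alpha) = \gamma^*(0) + \int_0^\alpha T^*(t)\,dt$, where $T^*$ is the first row of $A_{K^*}$.

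The first step is to normalize using the affine group. Because the affine group acts transitively on frames, I can choose $g=(B,b)$ so that $g\Gamma^*$ has the same initial frame $U = A_K(0) = A_{K^*}(0)$ as $\Gamma$ and so that the two curves share a starting point, $\gamma(0) = (g\gamma^*)(0)$; the linear part $B$ aligns the initial tangent and normal while the translation $b$ aligns the initial point. After this normalization both $\Gamma$ and $g\Gamma^*$ carry frames solving the Cartan equation $A' = KA$, respectively $A' = K^*A$, with common initial data $U$, so these frames are exactly the Picard solutions $A_K$ and $A_{K^*}$ to which the hypothesis applies.

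With the normalization in place, since $T$ and $T^*$ are rows of $A_K$ and $A_{K^*}$, the hypothesis $|A_K(\alpha) - A_{K^*}(\alpha)| < \delta$ in the sup-norm yields $|T(\alpha) - T^*(\alpha)| \leq \sqrt{2}\,\delta$ in the Euclidean norm. Then, exactly as in Lemma \ref{closeCurva}, aligned initial points give
$$
|\gamma(\alpha) - (g\gamma^*)(\alpha)| = \left| \int_0^\alpha T(t) - T^*(t)\,dt \right| \leq \int_0^\alpha |T(t) - T^*(t)|\,dt \leq \sqrt{2}\,L\,\delta.
$$
Taking $\delta < \varepsilon/(\sqrt{2}\,L)$ puts every point of $\Gamma$ within $\varepsilon$ of the corresponding point of $g\Gamma^*$ and conversely, so $d(\Gamma, g\Gamma^*) < \varepsilon$.

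I expect the main obstacle to be the bookkeeping of the normalization step: making precise that applying an affine transformation to $\Gamma^*$ genuinely realizes the abstract Picard frame $A_{K^*}$ as the moving frame of $g\Gamma^*$, and verifying that the affine tangent really is a row of $A$ so that integration returns the curve. The integral estimate itself is routine and identical in spirit to the Euclidean case. A secondary point, which I would flag but not dwell on, is that this bound lives on the common affine-arc-length domain $[0,L]$; any mismatch in total affine length between $\Gamma$ and $\Gamma^*$ is controlled separately by Lemma \ref{generallyUseful}, and the leftover domain contributes negligibly by the wiggle-room estimate of Lemma \ref{wiggleRoom}.
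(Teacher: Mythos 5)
Your proposal is correct and follows essentially the same route as the paper: choose $g$ to align initial points and frames, then bound $|\gamma(\alpha) - g\gamma^*(\alpha)|$ by integrating the difference of the tangent rows of $A_K$ and $A_{K^*}$, giving a bound linear in $L\delta$. Your treatment is in fact slightly more careful than the paper's, which omits the $\sqrt{2}$ sup-norm-to-Euclidean conversion and the explicit justification that the Picard frame of the normalized curve is the moving frame of $g\Gamma^*$.
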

\begin{proof}
We again choose $g$ so that the initial points and tangents agree. Then we see that
\begin{gather*}
    |\gamma(\alpha) - g\gamma^*(\alpha)| = |\int_0^\alpha T(s) - T^*(s)ds|\\
    \leq \int_0^\alpha \delta ds = \alpha \delta \leq L\delta < \varepsilon
\end{gather*}
for $\delta<\varepsilon/L.$
\end{proof}

\begin{lemma}
Let $A_K$ be the affine frame for $\gamma$, and let $M>0$ be such that $|A_K|\leq M$. Then $|\gamma(a)-\gamma(b)|\leq (b-a)M.$
\end{lemma}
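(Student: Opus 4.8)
The plan is to mirror the proof of Lemma \ref{wiggleRoom} almost verbatim, replacing the explicit Euclidean tangent $e^{i\theta(s)}$ with the affine tangent vector recorded in the top row of the frame $A_K$. The central observation is that, with $\gamma$ parameterized by affine arc length $\alpha$, the velocity $\gamma'(\alpha)$ is precisely the tangent vector $T(\alpha)$, which is the first row of $A_K(\alpha)$. Since the frame bound $|A_K|\le M$ controls every entry of $A_K$, it in particular controls the entries of $T$, and hence the size of the velocity. This reduces the whole statement to a one-line integral estimate.

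First I would take $a\le b$ without loss of generality and invoke the fundamental theorem of calculus to write
\[
\gamma(b) - \gamma(a) = \int_a^b \gamma'(\alpha)\, d\alpha = \int_a^b T(\alpha)\, d\alpha.
\]
Next I would apply the triangle inequality for vector-valued integrals,
\[
|\gamma(b) - \gamma(a)| = \left| \int_a^b T(\alpha)\, d\alpha \right| \le \int_a^b |T(\alpha)|\, d\alpha,
\]
and then bound the integrand pointwise. Because $T(\alpha)$ is a row of $A_K(\alpha)$ and $|A_K(\alpha)|\le M$ in the entrywise sup-norm, each component of $T(\alpha)$ has magnitude at most $M$, so $|T(\alpha)|\le M$. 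Substituting this bound yields $\int_a^b |T(\alpha)|\, d\alpha \le (b-a)M$, which is the claim.

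The one point requiring care — and the main obstacle — is the reconciliation of norm conventions: the hypothesis $|A_K|\le M$ is stated in the matrix sup-norm $|A| := \max_{i,j}|a_{i,j}|$ introduced earlier, whereas the integrand $|T(\alpha)|$ and the displacement $|\gamma(b)-\gamma(a)|$ are Euclidean. A naive comparison of a $2$-vector whose entries are each bounded by $M$ gives only $|T(\alpha)|\le \sqrt{2}\,M$, so to land exactly on the stated bound $(b-a)M$ I would either interpret $|T|$ consistently in the sup-norm (matching the convention fixed for $|A_K|$) or absorb the dimensional constant into $M$. In the regime where this lemma is actually used — bounding how far $\gamma^*$ can travel over a small leftover piece of domain, exactly as Lemma \ref{wiggleRoom} feeds into Lemma \ref{uncommonDomain} — the precise constant is immaterial, and all that matters is the linear-in-$(b-a)$ control of the displacement.
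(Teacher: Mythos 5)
Your proof is correct and takes exactly the same route as the paper, whose entire argument is the one-line estimate $|\gamma(a)-\gamma(b)| = \left|\int_a^b T(s)\,ds\right| \leq (b-a)M$. The norm-convention subtlety you flag (the potential $\sqrt{2}$ factor from comparing the entrywise sup-norm of $A_K$ with the Euclidean length of $T$) is silently glossed over in the paper, so your treatment is, if anything, more careful than the original.
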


\begin{proof}
\begin{gather*}
    |\gamma(a) - \gamma(b)| = |\int_a^b T(s)ds| \leq (b-a)M.
\end{gather*}
\end{proof}
We see then that we have reproduced the necessary pieces to prove lemma \ref{uncommonDomain} in the affine case. Because lemma \ref{generallyUseful} also applies, we get the desired result.
\begin{theorem}
Let $S$ be an affine signature with no intersection with the $u$-axis. Then for all $\varepsilon>0$, there exists $\delta>0$ so that if $d(S,S^*)<\delta$, then $d(\Gamma,g\Gamma^*)<\varepsilon$ for some $g$ in the Affine group.
\end{theorem}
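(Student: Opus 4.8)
The plan is to assemble the four-step chain of implications $(1)$--$(4)$ from the outline, fixing the thresholds from the inside out. Since $S$ has no intersection with the $u$-axis, its in-phase parameterization satisfies $\mu_\alpha \neq 0$, so $\mu$ is strictly monotone and $S$ is the graph of a continuous function over a compact interval; every result of the subsection ``The Signature as a Phase Portrait'' therefore applies verbatim with $\kappa$ replaced by $\mu$ and arc length $s$ replaced by affine arc length $\alpha$. Combining Lemma \ref{tubetoUni} (to pass from a Hausdorff ball to a tube) with Lemma \ref{generallyUseful}, I obtain, for any prescribed $\varepsilon' > 0$, a $\delta > 0$ such that $d(S,S^*) < \delta$ yields intervals $I = [x_1,y_1] \subset [0,L]$ and $I^* = [x_2,y_2] \subset [0,L^*]$ of common length $\ell$, with $\max\{d(I,[0,L]),\, d(I^*,[0,L^*])\} < \varepsilon'$ and $|\mu(\alpha + x_1) - \mu^*(\alpha + x_2)| < \varepsilon'$ for all $\alpha \in [0,\ell]$. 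This establishes implication $(1)$ on the common interval and reduces the theorem to an affine version of Lemma \ref{uncommonDomain}.

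Next I would propagate this closeness up the tower of affine invariants in the order $(2), (3), (4)$. Implication $(2)$ is the observation already recorded: because the Cartan matrices differ only in their lower-left entry, $|K(\alpha) - K^*(\alpha)| = |\mu(\alpha) - \mu^*(\alpha)|$, so the matrix threshold may be taken equal to the curvature threshold. Implication $(3)$ is exactly the lemma asserting that $|K - K^*|$ small forces $|A_K - A_{K^*}|$ small, whose proof rests on the uniform Picard estimates of Lemmas \ref{uniformBound1} and \ref{uniformBound2}. Implication $(4)$ is the reconstruction lemma: choosing $g$ in the affine group to align the initial point and frame of $\Gamma^*$ with those of $\Gamma$---possible since the nonsingular frames $A_K(0)$ and $A_{K^*}(0)$ are related by a unique element of $GL_2(\mathbb{R})$, with the translation part fixing the base point---and then integrating $T - T^*$ bounds $d(\Gamma, g\Gamma^*)$ on the common interval.

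Finally I would close the argument by replicating Lemma \ref{uncommonDomain} in the affine setting. The leftover pieces of $\Gamma$ and $\Gamma^*$ lying outside the common interval have affine-arc-length at most $\varepsilon'$ by the domain estimate of the first step, so by the affine analogue of Lemma \ref{wiggleRoom}---the bound $|\gamma(a) - \gamma(b)| \leq (b-a)M$ with $M$ controlling the affine frame---they contribute only $O(\varepsilon')$ to the Hausdorff distance. Chaining the thresholds from the inside out (fix $\varepsilon$, extract the frame threshold from $(4)$, then the matrix and curvature thresholds from $(3)$ and $(2)$, then the signature threshold $\delta$ from $(1)$) delivers $d(\Gamma, g\Gamma^*) < \varepsilon$, as claimed.

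The step I expect to be the main obstacle is the uniformity underlying implication $(3)$: the reconstruction requires a single iteration count and a single matrix bound valid simultaneously for every competitor frame $A_{K^*}$ arising from a signature in the $\delta$-ball, not merely for one fixed comparison pair. This is exactly what Lemmas \ref{uniformBound1} and \ref{uniformBound2} are engineered to supply, so once they are invoked the residual difficulty is purely bookkeeping: tracking the uncommon domains and verifying that $g$ can simultaneously match base point and frame.
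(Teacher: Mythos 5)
Your proposal is correct and follows essentially the same route as the paper: the same chain of implications $(1)$--$(4)$, with $(1)$ supplied by Lemma \ref{tubetoUni} and Lemma \ref{generallyUseful}, $(2)$ by the sup-norm identity for the Cartan matrices, $(3)$ by the Picard-iteration Lemmas \ref{uniformBound1} and \ref{uniformBound2}, $(4)$ by aligning initial point and frame and integrating $T - T^*$, and the leftover domains handled by the affine analogues of Lemmas \ref{wiggleRoom} and \ref{uncommonDomain}. In fact your write-up makes explicit some bookkeeping (the inside-out choice of thresholds and the frame-matching element of the affine group) that the paper leaves implicit when it simply assembles these pieces.
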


What we can observe is that this argument is not particularly special to the Affine group. In fact, the same argument will hold for any group where we can establish a frame with the following two properties.
\begin{itemize}
    \item There is a curvature associated with the cartan matrix of the frame that has a continuous relationship with all of its components.
    \item The frame has a continous relationship with the tangent of the curve $\gamma$.
\end{itemize}

\section{Closed Curves}

\subsection{The Non-degenerate Case}
We first show an extension of a result from \cite{Geiger} regarding the euclidean signature, where we take the euclidean curvature $\kappa$ and arc length parameterization $s$. Here we must additionally stipulate that our curves are non-degenerate, that is, they are such that $\kappa_s(s)=0$ for only finitely many values of $s$ in the minimal period of $\kappa$. We will use the notation $\sigma_\gamma(p):\mathbb{R}^2\to\mathbb{R}^2$ to refer to the map from a point on the curve to its corresponding point on the signature. We highlight the following argument as it stresses the point that our condition really ought to be on the signature and only implicitly on the curve itself, as in the proper conditions, the curve will be determined by its signature, and vice versa. We cite the fundamental result from \cite{Geiger}.
\begin{theorem}
\label{foundation}
Let $\Gamma_1$ and $\Gamma_2$ be two curves parameterized by $\gamma_1:\mathbb{R}\to\mathbb{R}^2$, and $\gamma_2:\mathbb{R}\to\mathbb{R}^2$ respectively. Suppose that there exists open intervals $I_1,I_2\subset \mathbb{R}$ where $\sigma_1|_{I_1}$ and $\sigma_2|_{I_2}$ are injective and have common image $S'$ homeomorphic to $\mathbb{R}$. Then $\Gamma_1':=\gamma_1(I_1)$ and $\Gamma_2':=\gamma_2(I_2)$ are congruent.
\end{theorem}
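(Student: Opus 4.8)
The plan is to produce an explicit arc-length-preserving correspondence between the two curve pieces directly from the coincidence of their signatures, and then invoke the Euclidean reconstruction theorem of the introduction. Since $\sigma_1|_{I_1}$ and $\sigma_2|_{I_2}$ are continuous injections of open intervals onto the common set $S'\cong\mathbb{R}$, each is a homeomorphism onto $S'$, so the transition map $\phi:=(\sigma_2|_{I_2})^{-1}\circ(\sigma_1|_{I_1}):I_1\to I_2$ is a well-defined homeomorphism. Reading off the two coordinates of the signature along this correspondence gives the matching relations $\kappa_1(s)=\kappa_2(\phi(s))$ and $\kappa_1'(s)=\kappa_2'(\phi(s))$ for every $s\in I_1$, where in each case the prime denotes differentiation with respect to the relevant arc length parameter.

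The heart of the argument is to upgrade $\phi$ to the shift $s\mapsto s+c$, i.e.\ to show it preserves arc length. I would avoid differentiating $\phi$ directly and instead use Lemma \ref{sigDEQ}. On any maximal open subinterval where $\kappa_1'\neq 0$ the curvature $\kappa_1$ is strictly monotone, so there $S'$ is the graph of a continuous nonvanishing function $F$ with $\kappa'=F(\kappa)$; since $\sigma_1$ and $\sigma_2$ have the \emph{same} image, $\kappa_2$ produces the same $F$ on the corresponding subinterval. By Lemma \ref{sigDEQ} the in-phase (arc length) parameterizations of both curves solve the single ODE $u'=F(u)$, and uniqueness of its solution forces them to agree after a shift of the base point, so $\phi(s)=s+c$ on this subinterval. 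Note that the second signature coordinate pins the sign, so $\phi'=1$ rather than $\phi'=-1$, and the congruence will be orientation-preserving.

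It then remains to patch these local shifts into one global shift. The points where $\kappa_1'$ vanishes are exactly where $S'$ meets the $u$-axis; the hypothesis $S'\cong\mathbb{R}$ (together with injectivity) rules out $\kappa_1'$ vanishing on any whole subinterval, since a constant-curvature arc would collapse a piece of the signature to a single point and destroy the $1$-manifold structure. This is precisely where the homeomorphism hypothesis earns its keep: it excludes the constant-curvature insertions of Musso and Nicolodi that break naive signature uniqueness. Under the standing non-degeneracy assumption these zeros are isolated, so the locally constant shift $c$ extends by continuity of $\phi$ across each of them to a single value on all of $I_1$.

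With $\phi(s)=s+c$ established, the relation $\kappa_1(s)=\kappa_2(s+c)$ says $\Gamma_1'$ and $\Gamma_2'$ have identical Euclidean curvature as functions of arc length (after shifting the base point of $\gamma_2$ by $c$). The Euclidean reconstruction theorem of the introduction then yields a single $g\in SE(2)$ aligning their initial points and unit tangents, whence $\Gamma_1'$ and $\Gamma_2'$ are congruent. I expect the main obstacle to be the patching step: making rigorous that $S'\cong\mathbb{R}$ forbids any interval of constant curvature and that the finitely many $u$-axis crossings do not obstruct the continuity argument for $c$; securing enough regularity of $\kappa$ to run the graph/ODE description via Lemma \ref{sigDEQ} at and near those crossings is the delicate point.
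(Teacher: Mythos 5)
You should first be aware that the paper contains no proof of this statement to compare against: Theorem \ref{foundation} is explicitly \emph{cited} from \cite{Geiger} (``We cite the fundamental result from \cite{Geiger}''), so your attempt can only be judged on its own merits and against the standing hypotheses of that section. On those terms your argument is essentially sound, and its skeleton is the natural one. The transition map $\phi=(\sigma_2|_{I_2})^{-1}\circ(\sigma_1|_{I_1})$ is indeed a homeomorphism: after identifying $S'$ with $\mathbb{R}$, each $\sigma_i|_{I_i}$ is a continuous injection of an interval into $\mathbb{R}$, hence strictly monotone and a homeomorphism onto its image. On a maximal open subinterval where $\kappa_1'\neq 0$, the common image is the graph of a single continuous non-vanishing $F$ with $\kappa'=F(\kappa)$, and the separation-of-variables construction in Lemma \ref{sigDEQ} does give uniqueness for $u'=F(u)$ when $F$ is continuous and non-vanishing (solutions are monotone and invertible via $t=\int du/F(u)$), so $\phi$ is a unit shift there; injectivity of $\sigma_1$ correctly rules out intervals of constant curvature, and the second signature coordinate fixes the orientation as you say.

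Two points deserve emphasis. First, your patching step is not a formality: a continuous function that is locally constant on a dense open set need not be constant (the Cantor function is the standard counterexample), so you genuinely need the zeros of $\kappa_1'$ to be isolated. That is exactly the non-degeneracy hypothesis which the surrounding section (and the cited reference) impose but which the bare statement of the theorem omits; you were right to invoke it explicitly, and the paper itself concedes that ``in the preceding argument that condition was necessary.'' Second, the reconstruction theorem of the introduction is stated for a compact interval $[a,b]$, while $I_1$ is open and possibly unbounded; to finish cleanly you should exhaust $I_1$ by compact subintervals containing a fixed base point, observing that the group element $g$ is determined once and for all by matching that base point and its unit tangent, so the same $g$ works on every subinterval and hence carries $\Gamma_2'$ onto $\Gamma_1'$.
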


Later in this paper, we will show that we can drop the non-degeneracy condition entirely, however in the preceding argument that condition was necessary.
\\
As will be re-emphasized later, the same result holds if we take the $i$th order signature 
$$
S^{(i)} := \{\kappa(s), \frac{d}{ds}\kappa(s), \dots, \frac{d^i}{ds^i}\kappa(s)\}\subset \mathbb{R}^{i+1}
$$ 
in place of $S$. This slightly strengthens the result, as when we take higher order derivatives, it becomes increasingly likely that $S^{(i)}$ is injective. The importance of this idea is that higher order derivatives will also determine the curvature under the proper circumstances. The proof goes through in the same way, as it is only the first two coordinates that really matter, but what this gives us is an additional means to identify different curves. If two open curves have the same signature, but that signature intersects itself at say, points $s_1,s_2$ of its parameterization, the above criteria would not be able to tell if they are congruent or not. However, looking at this location on the curve, we see that it is of the form $(\kappa(s_1),\kappa_s(s_1)) = (\kappa(s_2), \kappa_s(s_2))$, so upon differentiating to a higher order the point of intersection may dissipate. It is only when the curvatures agree at all derivatives that this additional criteria again can fail. Intuitively, this tells us that as long at each point of the signature there is only one path that preserves smoothness of the curve, then there is a unique curve that it corresponds to.\\
While artificially, it is not impossible to construct curves that have this ambiguity, in the wild this is exceedingly unlikely, and the following argument is meant to express this rigorously.\\
Let $\kappa(s)$ be $C^2$ such that there exists finitely many points $\{s_n\}_{n=1}^N$ where $\sigma(s_i)=\sigma(s_j)$ for all $i,j$. Looking at just two of these points, we see that for their intersection to persist in $S^{(2)}$, it must be that $\kappa_{ss}(s_1) = \kappa_{ss}(s_2).$ Define $x:= \kappa_{ss}(s_1)$, $y:=\kappa_{ss}(s_2)$. These values are independent of each other, as smoothness is a local property, so that values at $s_1$ and $s_2$ in principal have no relation to each other. Thus they must independently fall in the set $\{(x,y)\ |\ x=y\} \subset \mathbb{R}^2$. As this is the graph of a continuous function, it has Lebesgue measure 0. As the Lebsgue measure is sub-additive, this argument then extends to the larger collection of intersections to show that the intersection persisting is probability 0.

\subsubsection{Closed Curves}

We will now use the theorem from the previous section to prove an analogous result for closed curves that are not necessarily simple (the proof for simple curves was done previously in \cite{Geiger}).
\begin{theorem}\label{closedCurveUnique}
Assume $\Gamma_1$ and  $\Gamma_2$ are closed non-degenerate curves with the same $i$th order signature $S^{(i)}$, which is simple and closed. Then $\Gamma_1$ and $\Gamma_2$ are congruent.
\end{theorem}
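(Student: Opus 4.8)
The plan is to reduce the closed-curve statement to Theorem~\ref{foundation} by cutting the signature at a single point, and then to recover the full curves by a density argument. First I would record the topological picture: since $S^{(i)}$ is simple and closed it is homeomorphic to the circle $S^1$, and each curve $\Gamma_j$ has a periodic arc-length parameterization $\gamma_j:\mathbb{R}\to\mathbb{R}^2$ whose signature map $\sigma_j$ is periodic and surjects onto $S^{(i)}$. Non-degeneracy guarantees that $\sigma_j$ is locally injective away from the finitely many points where $\kappa_s$ vanishes, and together with simplicity of the image this forces $\sigma_j$ to traverse $S^{(i)}$ injectively over one fundamental period; equivalently, restricted to a half-open period $[a_j,a_j+\ell_j)$, the map $\sigma_j$ is a bijection onto $S^{(i)}$. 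I would also note that the in-phase (arc-length) parameterization is itself determined by the signature through the ODE $u'(s)=F(u(s))$ of Lemma~\ref{sigDEQ}, so that the traversal times $\ell_1,\ell_2$ agree.

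Next I would perform the cut. Fix a point $p\in S^{(i)}$ and remove it; the set $S':=S^{(i)}\setminus\{p\}$ is an open arc, hence homeomorphic to $\mathbb{R}$. For each $j$ let $s_j$ be the (unique, per period) parameter with $\sigma_j(s_j)=p$, and set $I_j:=(s_j,s_j+\ell_j)$. Then $\sigma_j|_{I_j}$ is injective with image exactly $S'$, so $I_1,I_2$ and their common image $S'$ satisfy the hypotheses of Theorem~\ref{foundation}. Applying it yields $g\in SE(2)$ with $g\,\gamma_2(I_2)=\gamma_1(I_1)$; that is, the two open arcs obtained by deleting a single point from $\Gamma_1$ and from $\Gamma_2$ are congruent via the same rigid motion.

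Finally I would upgrade this to congruence of the full closed curves. Because $\gamma_j(I_j)$ differs from $\Gamma_j$ by at most one point and $\gamma_j$ is continuous and periodic, $\overline{\gamma_j(I_j)}=\Gamma_j$. Since $g$ is an isometry it is continuous and commutes with closure, so
\[
g\,\Gamma_2=g\,\overline{\gamma_2(I_2)}=\overline{g\,\gamma_2(I_2)}=\overline{\gamma_1(I_1)}=\Gamma_1,
\]
and the two closed curves are congruent. This last step needs no injectivity of $\gamma_j$ in the plane, so it accommodates non-simple $\Gamma_j$ exactly as the statement requires.

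The step I expect to be the main obstacle is the very first one: passing from ``the signature set is simple'' to ``the signature map is injective over a single period, with matched periods $\ell_1=\ell_2$.'' A closed curve may traverse a simple closed signature several times per arc-length period (rotationally symmetric curves are the obvious danger), and if the deleted interval $I_j$ captured only a fraction of $\Gamma_j$ the closure in the last paragraph would recover a proper sub-arc rather than all of $\Gamma_j$. Ruling this out is exactly where non-degeneracy and the signature-determined parameterization of Lemma~\ref{sigDEQ} are essential: they pin down the traversal time of one loop of $S^{(i)}$ and hence force the two parameterizations to agree on how much of each curve one loop of the signature encodes. Once single-traversal with matched periods is secured, the remainder is the routine cut-and-close argument above.
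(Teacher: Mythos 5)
There is a genuine gap, and it is exactly at the step you flagged as the main obstacle: your claim that simplicity of $S^{(i)}$ plus non-degeneracy forces $\sigma_j$ to be injective over one fundamental period \emph{of the curve}, i.e.\ that one loop of the signature accounts for all of $\Gamma_j$. This is false. A closed curve with $n$-fold rotational symmetry ($n>1$) is perfectly non-degenerate and has a simple closed signature, yet its curvature has minimal period $\ell_j = L_j/n$, so $\sigma_j$ wraps around $S^{(i)}$ exactly $n$ times per curve period (this is precisely the index of symmetry $i_S(\Gamma)=L/\ell$ that the paper introduces later). Your appeal to Lemma~\ref{sigDEQ} does not repair this: the ODE $u'=F(u)$ pins down the traversal time of \emph{one loop} of the signature, giving $\ell_1=\ell_2$, but it says nothing about how many loops each curve makes before closing up, i.e.\ it cannot rule out $L_j = n_j\ell_j$ with $n_j>1$, nor does it force $n_1=n_2$. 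Consequently $I_j=(s_j,s_j+\ell_j)$ may capture only a $1/n_j$ fraction of $\Gamma_j$, the set $\overline{\gamma_j(I_j)}$ is then a proper sub-arc of $\Gamma_j$, and your final display $g\,\Gamma_2=\overline{g\,\gamma_2(I_2)}=\overline{\gamma_1(I_1)}=\Gamma_1$ fails. What your cut-and-close argument actually proves is congruence of one fundamental arc of $\Gamma_1$ with one fundamental arc of $\Gamma_2$; to conclude congruence of the full closed curves you still need an additional step, e.g.\ observe that the arc-length curvature functions (with aligned starting points) agree on $[0,\ell]$ and are both $\ell$-periodic, hence agree on all of $\mathbb{R}$, and then invoke the uniqueness-of-reconstruction theorem from the introduction to get $\gamma_1 = g\gamma_2$ globally. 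That addendum is absent from your proposal, and it is the mathematical heart of the closed-curve case.

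For comparison, the paper proceeds quite differently: it aligns the two parameterizations at a common signature point and then partitions each curve at its points of \emph{self-intersection}, $0=t_0^j<t_1^j<\dots<t_{n_j}^j=L_j$. It shows inductively, using Theorem~\ref{foundation} on each inter-intersection arc, that the corresponding signature arcs must coincide --- if one properly contained the other, congruence of the sub-arcs would force a self-intersection where none exists --- and concludes $n_1=n_2$ with congruent pieces, hence congruent curves. That decomposition is tailored to the non-simple curves this theorem targets (the simple case being cited from the earlier work), whereas your single cut of the signature cannot distinguish how many times each curve winds around $S^{(i)}$, which is the very degree of freedom the theorem must control.
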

\begin{proof}
Let $\gamma_1$ and $\gamma_2$ be unit speed parameterizations of $\Gamma_1$ and $\Gamma_2$, respectively and let $p_1=\gamma_1(0)\in \Gamma_1$ and $q_0=\sigma_{\gamma_1}(p_1)\in S$. As our signatures have the same image, there exists $p_2\in \Gamma_2$ such that $\sigma_{\gamma_2}(p_2) = q_0$. In short, we ensure that our parameterizations start in the same place on the signature. Let $\gamma_1$ and $\gamma_2$ have minimum periods $L_1$ and $L_2$ respectively, and translate them so that their domains are $[0,L_1]$ and $[0,L_2]$ where we take $L_1,L_2$ to be their minimal period, and they both have initial points $p_1,p_2$ respectively. Define $t_0^1:= 0 < t_1^1 < \dots < t_{n_1}^1 :=L_1$, $t_0^2:=0 < t_1^2 < \dots < t_{n_2}^2:=L_2$, where $t_i^j$ corresponds to the $i$th point of self-intersection in the parameterization $\gamma_j$.\\
 Let us first consider the intervals $(t_0^1, t_1^1)$ and $(t_0^2, t_1^2)$. We argue $\sigma_{1}((t_0^1, t_1^1)) = \sigma_{2}((t_0^2, t_1^2))$. Suppose not. Both open arcs have a common starting point $\sigma_{1}(0) = \sigma_{2}(0)$ by construction, and because the traversal of the signature is given by the values of $\kappa, \kappa_s$ encoded in this point, both curves must be traversed in the same direction from here, thus it must be that one properly contains the other.\\
Without loss of generality, let $\sigma_{1}((t_0^1, t_1^1)) \subsetneq$\\ $\sigma_{2}((t_0^2, t_1^2))$. It follows that
$$
\sigma_{1}(t_1^1) = \sigma_{2}(t^*) \in \sigma_{2}((t_0^2, t_1^2)),
$$
however, by theorem \ref{foundation}, $\gamma_1((t_0^1, t_1^1)) = \gamma_2((t_0^1, t^*))$, but the closure of the first has a point of intersection, whereas in the second $t^*<t_1^1$, so the first point of intersection is not achieved. Thus we have derived a contradiction and $t_1^1 = t_1^2$.\\
We can repeat this argument inductively to verify that $\gamma_1([0, t_{i}^1]) = \gamma_2([0, t_{i}^2])$ for all $i$, so it follows that $n_1=n_2$, and the curves must be congruent.
\end{proof}

\section{Partitions of The Signature}

Our following methodology will be based on section 1. This gives us a very different perspective on the problem, as we will be arguing from the viewpoint of the signature as a differential equation $\sigma(s) = (\kappa(s), \kappa_s(s)),$ rather than through the use of topological arguments as above.
\\
As such, we re-introduce notation regarding phase-portraits, that is, sets of the form
$$
S^{(i)}:= \{(f,f', f'',\dots, f^{(i)})\}.
$$
We say that a parameterization
$$
\sigma^{(i)}(t) := (f(t),f_1(t),\dots, f_i(t))
$$
is in-phase if $f_i=f^{(i)}_t.$
\\
In the following, a signature or $i$th order signature will not necessarily refer to a euclidean signature, but rather the set as described before, where our curvature and parameterization are chosen with respect to the action of a particular group $G$. 
\\
We recall the following theorem from section 2, re-worded in terms of this new language.
\begin{theorem} \label{MainTheoremOpenCurves}
Let $S$ be a 1st order signature such that $\kappa_\alpha\neq 0$. Then for all $\varepsilon>0$, there exists $\delta>0$ so that if $d(S,S^*)<\delta$, then $d(\Gamma,g\Gamma^*)<\varepsilon$ for some $g\in G$. In fact, if $\gamma:[0,L_1]\to \mathbb{R}^2$, and $\gamma^*:[0,L_2]\to \mathbb{R}^2$, then we can choose such a $g,\delta$ so that setting $L:=\max\{L_1,L_2\}$, $\ell:= \min\{L_1,L_2\}$
$$
|\gamma(t) - g\gamma^*(t)| < \varepsilon,\quad \forall t\in [0,\ell]
$$
and for $t_1:= \min\{t,L_1\}$, $t_2:= \min\{t,L_2\}$,
$$
|\gamma(t_1) - \gamma(t_2)| < \varepsilon,\quad \forall t\in [\ell,L].
$$
\end{theorem}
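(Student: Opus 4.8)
The plan is to recognize that Theorem \ref{MainTheoremOpenCurves} is essentially a restatement, in the group-agnostic language of phase portraits, of the main robustness results already assembled in the Euclidean and Affine sections. The strategy is therefore to reduce the claim to the two ingredients that were isolated as sufficient for the whole machine to run: a frame whose Cartan matrix depends continuously on the curvature, and a frame that depends continuously on the tangent of $\gamma$. Granting those, I would chain together the lemmas exactly as the Affine argument did, but now keeping careful track of the two separate conclusions on the common interval $[0,\ell]$ and on the leftover interval $[\ell,L]$.

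\medskip

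First I would invoke Lemma \ref{generallyUseful} to convert the hypothesis $d(S,S^*)<\delta$ into the data required by Lemma \ref{uncommonDomain}: there exist subintervals $I\subset[0,L_1]$ and $I^*\subset[0,L_2]$ of common length, taking up all but an arbitrarily small piece of each domain, on which the curvatures agree up to $\varepsilon$. This is exactly where the hypothesis $\kappa_\alpha\neq 0$ (i.e.\ the signature avoids the $u$-axis) gets used, since the constant $m=\min|\kappa'|$ must be positive for $\rho^\pm$ and the tube estimates to exist. Next, on the common interval I would push ``close curvatures'' forward to ``close curves'' through the group-independent chain: close $\mu$ gives close Cartan matrices $K$ (shown to need only $\delta=\varepsilon$), close $K$ gives close frames $A_K$ via the uniform Picard estimates (Lemmas \ref{uniformBound1} and \ref{uniformBound2}), and close frames give $|\gamma(t)-g\gamma^*(t)|<\varepsilon$ by integrating the tangent, after choosing $g\in G$ to align the initial point and frame. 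This yields the first displayed inequality on $[0,\ell]$.

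\medskip

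For the second inequality, on the leftover interval $[\ell,L]$, I would use the analogue of Lemma \ref{wiggleRoom}: since the frame is uniformly bounded, $|\gamma(a)-\gamma(b)|\le M|a-b|$, so the short uncovered tail can contribute only a controllably small displacement. Because $|L_1-L_2|$ is forced small by Proposition 2 / Lemma \ref{generallyUseful}, the point $\gamma(t_1)$ and $\gamma(t_2)$ (with $t_1,t_2$ the clamped parameters) stay within $\varepsilon$ of each other, precisely mirroring the endpoint bookkeeping in Lemma \ref{uncommonDomain} and Corollary \ref{ExpBound}.

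\medskip

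\textbf{The hard part} will be verifying that the two structural bullet points hold for a general group $G$ with enough uniformity to make the constants in the Picard bounds independent of the particular curvature drawn from the neighborhood; the Euclidean and Affine cases supply explicit frames, but in the abstract setting I expect the real obstacle to be quantifying ``continuous relationship'' into the Lipschitz-type estimates that Lemmas \ref{uniformBound1} and \ref{uniformBound2} silently require. Once that uniformity is granted, the remainder is an exercise in reassembling the already-proven pieces rather than a genuinely new argument.
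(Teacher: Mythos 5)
Your proposal is correct and follows essentially the same route as the paper: the paper offers no separate proof of Theorem \ref{MainTheoremOpenCurves}, presenting it explicitly as a re-wording of the Section 2 results, whose chain you reproduce faithfully --- Lemma \ref{generallyUseful} plus Lemma \ref{uncommonDomain} for the curvature comparison and domain bookkeeping, the Picard-iteration lemmas (\ref{uniformBound1}, \ref{uniformBound2}) to pass from close curvatures to close frames to close curves, and the bounded-frame analogue of Lemma \ref{wiggleRoom} for the leftover interval $[\ell,L]$. You also correctly flag the paper's weakest point: the two bullet-point conditions on a general group $G$ are stated but never made quantitatively precise, so the uniformity you identify as ``the hard part'' is exactly what the paper leaves implicit.
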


What this says is that if our two signatures are close enough to each other, then we can expect the curves themselves to be traversed in more or less the same way. The focus of this section will be to extend this result to the case where some higher-order derivative of the curvature is non-zero. The main idea of this proof is that we will be able to locally verify that the curves are congruent due to the non-zero derivatives of the curvature and similar initial values, and then we will be able to piece together this local data to cover our curve and show global congruence. 
\\
We define a \textit{partition} of $S^{(i)}$ to be a sequence $0=t_0<t_1<\dots<t_n = L$ , $f^{(k_j)}(t_j)\neq 0$ for any $0<j<n$, that has the following property. Let $\sigma^{(i)}(t):=(f(t), f'(t), \dots, f^{(i)}(t))$, and define $S_j := \sigma^{(i)}([t_{j-1}, t_j])$ with in phase parameterization $\sigma_j:=(f_j,\dots, f_j^{(i)})$. For all $j\in [n]$, for all $\varepsilon>0$, there exists $\delta>0$ such that, for any $S_j^*$, $\sigma_j^*:[0,L^*]\to \mathbb{R}^i$ defined similarly, $\ell,L,t_1,t_2$ as before,
\begin{gather*}
d(S_j, S^*_j)<\delta_j \Longrightarrow\\
|\sigma_j(t) - \sigma_j^*(t)| < \varepsilon,\ \forall t\in [0, \ell],\\
|\sigma_j(t_1) - \sigma_j^*(t_2)| < \varepsilon,\ \forall t\in [\ell, L],
\end{gather*}
and $|L-L^*|<\varepsilon$.\\
{\center{
\includegraphics[width=0.45\columnwidth]{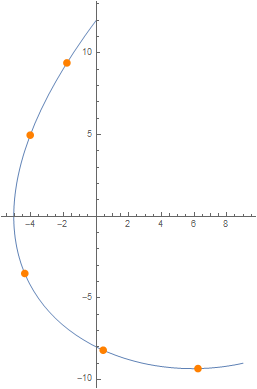}
\captionof{figure}{A partition depicted on a 1st order signature. In this case, the condition for one derivative to be non-zero is equivalent to the point not lying on the $x$-axis.}
}}

\medskip
Our main motivation for this definition, and the following lemmas, will be to establish that it is enough that our curve has no high order vertices, that is, points of the form $(x,0,\dots,0)\subset S^{(i)}.$\\
We see that the definition of a partition corresponds very nicely to the case elaborated on in theorem \ref{MainTheoremOpenCurves}, and it in essence spells out how exactly we will approach the proof. Having a partition essentially states that we know how to handle our phase portrait between each of these points $\sigma^{(i)}(t_j)$, and what we will want to show next is that we can extend this information to the phase portrait globally. From now on, we will be using $\kappa,\kappa^*$ rather than $f$ and $g$ as these align more with the notation used in arguments regarding curvatures, although in principle they are still only being viewed as phase portraits. Our argument will take multiple steps, but the main idea is that we want to break our $S_*^{(i)}$ into pieces $S_j^*$ that correspond as desired to each $S_j$.

\begin{lemma}\label{injectiveSig}
Let $0=t_0<t_1<\dots<t_n = L$ be a partition of $S^{(i)}$ an injective signature. Then there exists $\delta>0$ so that $(S_i)_{\delta} \cap (S_j)_{\delta} = \emptyset$ if $|i-j|>1$.
\end{lemma}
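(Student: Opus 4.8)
The plan is to exploit that injectivity turns disjoint parameter intervals into disjoint compact image sets, and that two disjoint compact sets in Euclidean space are separated by a strictly positive gap. First I would observe that each piece $S_j = \sigma^{(i)}([t_{j-1}, t_j])$ is compact, being the continuous image of the compact interval $[t_{j-1}, t_j]$ under the in-phase parameterization $\sigma^{(i)}$.

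Next, for any pair with $|i-j|>1$, the parameter intervals $[t_{i-1}, t_i]$ and $[t_{j-1}, t_j]$ are genuinely disjoint: unlike adjacent pieces, they do not even share an endpoint, which is exactly why the hypothesis must exclude the case $|i-j|=1$. Since $S^{(i)}$ is injective, $\sigma^{(i)}$ sends these disjoint intervals to disjoint images, so $S_i \cap S_j = \emptyset$. I would then invoke the standard separation fact: the map $p \mapsto d(p, S_j)$ is continuous (as already noted in the proof of Lemma \ref{tubetoUni}) and strictly positive on the compact set $S_i$, hence attains a positive minimum there; call it $2\delta_{ij}>0$. The associated neighborhoods are then disjoint, for if some $p$ lay in both $(S_i)_{\delta_{ij}}$ and $(S_j)_{\delta_{ij}}$, the triangle inequality would yield $2\delta_{ij} \leq d(p, S_i) + d(p, S_j) < 2\delta_{ij}$, a contradiction.

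Finally, because a partition has only finitely many pieces, there are finitely many pairs $(i,j)$ with $|i-j|>1$, so setting $\delta := \min_{|i-j|>1}\delta_{ij}$ produces a single positive $\delta$ with $(S_i)_\delta \cap (S_j)_\delta = \emptyset$ whenever $|i-j|>1$. The argument is essentially soft, and the only step that carries any weight is the separation of the images, which rests entirely on injectivity of $S^{(i)}$; without it, far-separated parameter values could map to a common signature point — precisely the self-intersection phenomenon the remainder of this section must treat by other means.
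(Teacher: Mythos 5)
Your proof is correct and follows essentially the same route as the paper's: compactness of each $S_j$, disjointness of non-adjacent pieces via injectivity, a positive minimum of the distance function, and a final minimum over the finitely many pairs. You simply make explicit the steps (the role of injectivity, the triangle-inequality separation) that the paper's terser argument leaves implicit.
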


\begin{proof}
We first see that each $S_i$ is compact, and for $|i-j|>1$, $S_i\cap S_j=\emptyset$. Thus $d(S_i,S_j)>0$, and we can separate them by uniform neighborhoods. Because there are only finitely many such $S_j$, we can take the smallest necessary neighborhood, and repeat this procedure for each $S_i$.
\end{proof}

{\center{
\includegraphics[width=0.45\columnwidth]{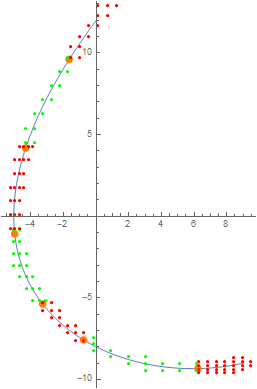}
\captionof{figure}{A depiction of choosing uniform neighborhoods as specified in the above lemma.}
}}

\medskip

\begin{lemma}\label{gluing1}
Let $0=t_0<t_1<\dots<t_n = L$ be a partition of $S^{(i)}$ an injective signature, and let $S_*^{(i)}$ be another phase portrait. For any $\varepsilon>0$, there exists $\delta>0$  such that if $d(S^{(i)},S_*^{(i)})<\delta$ then there exists a path component $S_j^*\subset S_*^{(i)}\cap (S_j)_\delta$, with in phase parameterization $\sigma_j^*$ such that $|\sigma_j^*(0)-\sigma_j(0)|<\varepsilon$ and $|\sigma_j^*(L_j^*)-\sigma_j(L_j)|<\varepsilon$.
\end{lemma}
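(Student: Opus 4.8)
The plan is to parameterize the perturbed signature and follow it, recording at each moment which of the neighborhoods $(S_j)_\delta$ it occupies; the pieces $S_j^*$ will be the maximal arcs of $S_*^{(i)}$ that sit over a single $S_j$, with the transitions between consecutive pieces taking place in the overlaps near the partition points.

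First I would fix the global separation. By Lemma~\ref{injectiveSig} there is $\delta_0>0$ with $(S_i)_{\delta_0}\cap(S_j)_{\delta_0}=\emptyset$ whenever $|i-j|>1$, and I will only take $\delta\le\delta_0$. Since $d(S^{(i)},S_*^{(i)})<\delta$ we have $S_*^{(i)}\subset (S^{(i)})_\delta=\bigcup_j (S_j)_\delta$, so every point of the perturbed signature lies in at least one and, by non-adjacency, at most two necessarily consecutive pieces' neighborhoods. Let $\sigma_*^{(i)}$ be an in-phase parameterization of $S_*^{(i)}$, chosen to begin within $\delta$ of $\sigma^{(i)}(0)=\sigma_1(0)$, which is possible because $\sup_{p\in S^{(i)}}d(p,S_*^{(i)})<\delta$ forces a point of $S_*^{(i)}$ near the start of $S^{(i)}$.

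Next I would localize at the partition points. For an interior partition point $\sigma^{(i)}(t_j)$ the partition hypothesis provides $f^{(k_j)}(t_j)\ne 0$, so $S^{(i)}$ passes through $\sigma^{(i)}(t_j)$ as a simple arc with nonzero velocity in the $k_j$-th coordinate, and $S_j\cup S_{j+1}$ is locally a transversal crossing rather than a cusp or a stationary point. I would use this, together with the non-adjacency separation, to show that for $\delta$ small the overlap $(S_j)_\delta\cap(S_{j+1})_\delta$ is a thin transversal slab around $\sigma^{(i)}(t_j)$, and define $S_j^*$ to be the path component of $S_*^{(i)}\cap(S_j)_\delta$ that meets the arc traversed by $\sigma_*^{(i)}$ between the entry slab near $\sigma^{(i)}(t_{j-1})$ and the exit slab near $\sigma^{(i)}(t_j)$. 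Each such $S_j^*$ is a continuous image of a parameter interval, hence path-connected, and the separation of non-adjacent neighborhoods shows these path components are distinct and exhaust $S_*^{(i)}$ as $j$ ranges over the partition. For the endpoint estimate I would then push $\delta\to 0$: the entry and exit slabs containing $\sigma_j^*(0)$ and $\sigma_j^*(L_j^*)$ collapse onto the single points $\sigma_{j}(0)=\sigma^{(i)}(t_{j-1})$ and $\sigma_j(L_j)=\sigma^{(i)}(t_j)$, so controlling the slab width in terms of the nonzero derivatives at the finitely many partition points lets me choose $\delta$ small enough that $|\sigma_j^*(0)-\sigma_j(0)|<\varepsilon$ and $|\sigma_j^*(L_j^*)-\sigma_j(L_j)|<\varepsilon$ simultaneously for every $j$.

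The step I expect to be the main obstacle is the localization above, namely making rigorous the claim that $\sigma_*^{(i)}$ passes from $(S_j)_\delta$ to $(S_{j+1})_\delta$ through a single clean transition. The danger is that the perturbed signature could oscillate in and out of the overlap slab near a partition point, producing several candidate transitions and destroying the clean path-component structure. The non-vanishing-derivative condition is exactly what precludes this: it gives $S^{(i)}$ a definite direction of travel at $\sigma^{(i)}(t_j)$, so that for small $\delta$ the slab is crossed transversally and any curve staying within $\delta$ of $S^{(i)}$ traverses it without lingering. Converting this geometric picture into a quantitative bound on the slab width, uniform over the finitely many partition points, is the technical heart of the argument.
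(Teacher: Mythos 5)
Your proposal is correct and follows essentially the same route as the paper: separate non-adjacent pieces via Lemma~\ref{injectiveSig}, locate points of $S_*^{(i)}$ in the overlaps $W_1=(S_{j-1})_\delta\cap(S_j)_\delta$ and $W_2=(S_j)_\delta\cap(S_{j+1})_\delta$ near the partition points, and extract the path component of $S_*^{(i)}\cap(S_j)_\delta$ running from one overlap to the other. The ``slab collapse'' step you flag as the technical heart---controlling the size of the overlaps via the nonvanishing derivative so the endpoints of $S_j^*$ land within $\varepsilon$ of $\sigma_j(0)$ and $\sigma_j(L_j)$---is exactly what the paper isolates as the separate Lemma~\ref{epball} immediately afterward, so the two arguments match in substance.
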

\begin{proof}
Take $\delta:=\min\{\delta_j\}$, and further restrict it so that $(S_i)_{\delta} \cap (S_j)_{\delta} = \emptyset$ if $|i-j|>1$ by lemma \ref{injectiveSig}.\\
Define $s_\ell:=\gamma_j(0),\ s_r:=\gamma_j(L)$, $W_1:= (S_{j-1})_\delta\cap (S_{j})_\delta,\ W_2:=(S_{j+1})_\delta\cap(S_{j})_\delta.$ Because\\ $d(S^{(i)},S_*^{(i)})<\delta$, we have that there is some $p_1\in S_*^{(i)}\cap W_1$ and $p_2\in S_*^{(i)}\cap W_2$. Now, $p_1$ and $p_2$ are connected by a path in $S_*^{(i)}$ as this set is a path. By our choice of $\delta$, because each of these neighborhoods is disjoint accept to the neighborhoods adjacent to them, this path must go through $(S_j)_\delta$. Thus, there is a path component $S_j^*\subset (S_j)_\delta$ with initial point in $W_1$ and terminal point in $W_2$ (because this is a phase portrait its orientation necessitates this), concluding our proof.
\end{proof}

\begin{lemma}\label{epball}
For all $\varepsilon>0$ we can choose $\delta>0$ so that $W_1\subset (s_\ell)_\varepsilon$ and $W_2 \subset (s_r)_\varepsilon.$
\end{lemma}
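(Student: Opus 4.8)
The plan is to exploit the injectivity of the signature, which forces the compact arcs $S_{j-1}$ and $S_j$ to meet in a single point, their shared endpoint $s_\ell$, and likewise $S_j\cap S_{j+1}=\{s_r\}$. The entire content of the lemma is then that the overlap of their $\delta$-neighborhoods cannot stray from this common point once $\delta$ is small enough; this is a compactness statement. I would treat $W_1$ and $s_\ell$ in detail, the argument for $W_2$ and $s_r$ being identical.

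First I would record the two structural facts. Since $\sigma^{(i)}$ is continuous and $S_{j-1}=\sigma^{(i)}([t_{j-2},t_{j-1}])$, $S_j=\sigma^{(i)}([t_{j-1},t_j])$ are images of closed intervals, each $S_k$ is compact. Because $\sigma^{(i)}$ is injective on $[0,L]$, any common point of $S_{j-1}$ and $S_j$ must come from a single parameter value lying in both $[t_{j-2},t_{j-1}]$ and $[t_{j-1},t_j]$, hence from $t_{j-1}$; thus $S_{j-1}\cap S_j=\{s_\ell\}$ where $s_\ell=\sigma^{(i)}(t_{j-1})$. This single-point intersection is the one genuinely nontrivial ingredient, and it is exactly where injectivity enters: if the two arcs shared a whole subarc, the conclusion would be false, since the overlap of neighborhoods would persist along that subarc no matter how small $\delta$ gets.

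Next I would argue by contradiction. Suppose some $\varepsilon>0$ fails, so that for every $\delta>0$ the set $W_1=(S_{j-1})_\delta\cap(S_j)_\delta$ contains a point outside $(s_\ell)_\varepsilon$. Choosing $\delta_n\to 0$ (with $\delta_n\le\delta_1$) yields points $p_n\in(S_{j-1})_{\delta_n}\cap(S_j)_{\delta_n}$ with $|p_n-s_\ell|\ge\varepsilon$. Since $d(p_n,S_{j-1})<\delta_n\le\delta_1$, all $p_n$ lie in the compact set $\overline{(S_{j-1})_{\delta_1}}$, so a subsequence converges to some $p^*$. Recalling that $d(\,\cdot\,,C)$ is continuous for any closed set $C$ (as used in the proof of Lemma~\ref{tubetoUni}) and that $d(p_n,S_{j-1}),d(p_n,S_j)<\delta_n\to 0$, the limit satisfies $d(p^*,S_{j-1})=d(p^*,S_j)=0$; as both arcs are closed, $p^*\in S_{j-1}\cap S_j=\{s_\ell\}$. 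But $|p_n-s_\ell|\ge\varepsilon$ passes to the limit as $|p^*-s_\ell|\ge\varepsilon>0$, contradicting $p^*=s_\ell$. Hence some $\delta$ works and $W_1\subset(s_\ell)_\varepsilon$.

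If one prefers an explicit (non-contradiction) choice of $\delta$, the same facts give it directly: the function $p\mapsto\max\{d(p,S_{j-1}),d(p,S_j)\}$ is continuous and strictly positive on the compact set $\overline{(S_{j-1})_{\delta_1}}\setminus(s_\ell)_\varepsilon$ (it vanishes only at $s_\ell$, which is excluded), so it attains a minimum $\mu>0$ there; taking $\delta:=\min\{\mu,\delta_1\}$ forces any $p\in W_1$ to satisfy $\max\{d(p,S_{j-1}),d(p,S_j)\}<\mu$, which is impossible outside $(s_\ell)_\varepsilon$, giving $W_1\subset(s_\ell)_\varepsilon$. Beyond the single-point intersection, every step here is routine compactness bookkeeping, so I expect no further obstacles.
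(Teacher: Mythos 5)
Your proof is correct, but it takes a genuinely different route from the paper. You localize $W_1$ purely by soft topology: compactness of the arcs plus injectivity of $\sigma^{(i)}$, which forces $S_{j-1}\cap S_j=\{s_\ell\}$, and then a limiting (or minimum-of-a-positive-continuous-function) argument shows the overlap of $\delta$-neighborhoods must collapse onto that single shared point. The paper instead leans on the partition hypothesis that some derivative is non-vanishing at the breakpoint $t_j$: reducing WLOG to $\kappa_s(t_j)=2\alpha>0$, it uses continuity of $\kappa_s$ to get local monotonicity of $\kappa$ on $[t_j-\beta,t_j+\beta]$, concludes that for $\delta<\beta\alpha/2$ the overlap $W_2$ is trapped in $(\sigma^{(i)}([t_j-\beta,t_j+\beta]))_\delta$, and finishes by continuity of $\sigma^{(i)}$. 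The trade-off: your argument needs fewer hypotheses (no derivative condition at the $t_j$ at all, only injectivity and compactness) and is self-contained in ruling out points of $W_1$ that are near \emph{distant} portions of the two arcs — a point the paper's containment step glosses over, implicitly relying on the separation already arranged in Lemma~\ref{injectiveSig}. What the paper's quantitative route buys is an explicit dependence of $\delta$ on $\alpha$ and $\beta$, in keeping with its general program of explicit bounds (cf. Corollary~\ref{ExpBound}), and the same ``nonzero derivative pushes the curve away from $s_r$'' mechanism is reused almost verbatim in Lemma~\ref{gluing2}, whereas your compactness argument yields a $\delta$ with no usable formula. Two small housekeeping points: you should note the boundary cases $j=1$ and $j=n$ (where $S_{j-1}$ or $S_{j+1}$ does not exist and the claim is trivial, as the paper remarks), and your argument tacitly uses that the signature is injective on the closed parameter interval, which is exactly the standing hypothesis of Lemmas~\ref{injectiveSig} and~\ref{gluing1}, so it is available here.
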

\begin{proof}
This is trivial for $W_1$ or $W_2$ respectively if $j=1,n$ as they coincide with the uniform neighborhood, so assume otherwise. We can also assume that the first derivatives are non-zero, and $\kappa_s(t_{j-1}),\kappa_s(t_j)>0$. This is because we will only need to apply our argument to the next highest derivative, so it is inconsequential which one we start at. We will only consider $W_2$, as the argument is mirrored.\\
Set $2\alpha := \kappa_s(t_j)$. As $\kappa_s$ is continuous, there exists some $\beta>0$ where for all $t\in [t_j-\beta, t_j+\beta]$, $\kappa_s(t)>\alpha$. In particular, $\kappa$ is increasing on this interval, and
$$
|\kappa(t_j+\beta) - \kappa(t_j -\beta)| \geq \beta \alpha.
$$
Thus if we take $\delta<\beta\alpha /2$, we see that $\kappa(t_j+\beta), \kappa(t_j -\beta)\not\in W_2$ as
$$
\kappa(t_j\pm \beta) \not\in (\kappa(t_j \mp \beta))_\delta,
$$
so that $W_2\subset (\sigma^{(i)}([t_j-\beta, t_j+\beta]))_\delta$. As $\sigma^{(i)}$ is continuous, for all $\varepsilon>0$, there exists $\beta>0$ such that 
\begin{gather*}
\sigma^{(i)}([t_j-\beta, t_j+\beta]) \subset (\sigma^{(i)}(t_j))_\varepsilon\\
\Longrightarrow (\sigma^{(i)}([t_j-\beta, t_j+\beta]))_\delta \subset (\sigma^{(i)}(t_j))_{\varepsilon+\delta}
\end{gather*}
and the claim follows.
\end{proof}
What this lemma tells us is that there are certain portions of $S_j^*$ that suit us well, however we have yet to verify that we can connect them seamlessly. We now show that this is resolved upon small enough choice of $\delta.$
\begin{lemma}\label{gluing2}
Let $0=t_0^1<t_1^1<\dots<t_n^1 = L$ be a partition of $S^{(i)}$and let $S_*^{(i)}$ be another phase portrait. For all $\varepsilon>0$, there exists $\delta>0$ so that given $d(S_*^{(i)}, S^{(i)})<\delta$, we can find a sequence $0=t_0^2<t_1^2<\dots<t_n^2=L^*$ where $|\kappa^*(t_j^2) - \kappa(t_j^1)|<\varepsilon,$
and $|t_j^1 - t_j^2| <\varepsilon.$
\end{lemma}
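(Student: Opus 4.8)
The plan is to define the perturbed partition points $t_j^2$ as the parameters at which the in-phase parameterization of $S_*^{(i)}$ passes from one path component to the next, and then to control both required quantities by invoking the preceding gluing lemmas together with the defining property of a partition. First I would fix $\varepsilon > 0$ and choose $\delta$ small enough to simultaneously satisfy the hypotheses of Lemmas \ref{injectiveSig}, \ref{gluing1}, and \ref{epball}, to have $\delta < \delta_j$ for every segment, and to make each per-segment error below at most $\varepsilon/n$. By Lemma \ref{gluing1}, for each $j$ there is a path component $S_j^* \subset S_*^{(i)} \cap (S_j)_\delta$ whose initial and terminal points lie in the overlap regions $W_1 = (S_{j-1})_\delta \cap (S_j)_\delta$ and $W_2 = (S_j)_\delta \cap (S_{j+1})_\delta$. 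By Lemma \ref{epball}, these overlaps are contained in $\varepsilon$-balls about the endpoints $s_\ell = \sigma^{(i)}(t_{j-1}^1)$ and $s_r = \sigma^{(i)}(t_j^1)$ of the segment $S_j$, so the endpoints of $S_j^*$ are within $\varepsilon$ of $\sigma^{(i)}(t_{j-1}^1)$ and $\sigma^{(i)}(t_j^1)$ respectively.

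Next I would establish that these path components tile $S_*^{(i)}$ consistently. Lemma \ref{injectiveSig} guarantees that non-adjacent neighborhoods $(S_i)_\delta, (S_j)_\delta$ are disjoint, so as the oriented curve $\sigma^*$ leaves $(S_{j-1})_\delta$ it can only enter $(S_j)_\delta$, and the terminal point of $S_j^*$ and the initial point of $S_{j+1}^*$ both lie in the common overlap $W_2 = (S_j)_\delta \cap (S_{j+1})_\delta$. I would then define $t_j^2$ to be the terminal parameter of $S_j^*$ in the in-phase parameterization of $S_*^{(i)}$, so that each $S_j^*$ has parameter domain $[t_{j-1}^2, t_j^2]$ and length $L_j^* = t_j^2 - t_{j-1}^2$. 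The curvature estimate is then immediate: since $\sigma^*(t_j^2)$ lies in $W_2 \subset (s_r)_\varepsilon$ and $\kappa$ is the first coordinate of the signature, $|\kappa^*(t_j^2) - \kappa(t_j^1)| \leq |\sigma^*(t_j^2) - \sigma^{(i)}(t_j^1)| < \varepsilon$.

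For the parameter estimate I would use the defining property of the partition. Since $S_j^* \subset (S_j)_\delta$ with endpoints within $\varepsilon$ of the endpoints of $S_j$, and $d(S^{(i)}, S_*^{(i)}) < \delta$ forces $S_j^*$ to cover $S_j$ to within $\delta$, I obtain $d(S_j, S_j^*) < \delta_j$; the partition property then yields $|L_j - L_j^*| < \varepsilon/n$ for each $j$, where $L_j = t_j^1 - t_{j-1}^1$. Writing $t_j^1 = \sum_{k=1}^j L_k$ and $t_j^2 = \sum_{k=1}^j L_k^*$ and summing gives $|t_j^1 - t_j^2| \leq \sum_{k=1}^j |L_k - L_k^*| < n \cdot (\varepsilon/n) = \varepsilon$, as desired.

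The main obstacle I anticipate is justifying the consistent gluing together with the two-sided Hausdorff bound on each segment. It is easy that $\sup_{p \in S_j^*} d(p, S_j) < \delta$ because $S_j^* \subset (S_j)_\delta$, but to invoke the partition property I need the reverse inequality, namely that every point of $S_j$ has a nearby point of $S_j^*$. Establishing this coverage, and simultaneously ruling out that the oriented curve $\sigma^*$ oscillates between adjacent neighborhoods in a way that would spoil the clean identification of $t_j^2$ as a single transition parameter, is the delicate step, and it is precisely where Lemmas \ref{injectiveSig} and \ref{epball} together with the orientation of a phase portrait must be used carefully.
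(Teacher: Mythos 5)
Your construction breaks down at exactly the point you flag as ``delicate,'' and unfortunately that point is the entire content of the lemma. You define $t_j^2$ as the terminal parameter of $S_j^*$ and then assert that each $S_j^*$ has parameter domain $[t_{j-1}^2,t_j^2]$, so that $t_j^2-t_{j-1}^2=L_j^*$. This silently assumes that the terminal point of $S_{j-1}^*$ and the initial point of $S_j^*$ occur at the \emph{same} parameter value of $\sigma_*^{(i)}$. But these are path components of two different sets, $S_*^{(i)}\cap(S_{j-1})_\delta$ and $S_*^{(i)}\cap(S_j)_\delta$; between leaving the first (as a chosen component) and entering the second, $\sigma_*^{(i)}$ can spend additional parameter time wandering inside the overlap $W_2$. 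So the spacing of your partition points is not $\sum_k L_k^*$ but $\sum_k \bigl(L_k^*+\text{filler}_k\bigr)$, and your summation $|t_j^1-t_j^2|\le\sum_k|L_k-L_k^*|$ is unjustified: the partition property controls the component lengths $L_k^*$, but says nothing about the filler. Deferring this to a closing caveat does not repair it, because nothing in your argument ever bounds the filler.

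The paper's proof is devoted almost entirely to this issue. It takes the $t_j^2$ to be the \emph{initial} points of the components, sets $\mathcal{L}_j^*:=t_j^2-t_{j-1}^2$, observes that $\mathcal{L}_j^*$ may exceed $L_j^*$, and bounds the excess using the non-vanishing derivative at the partition point: with $2\alpha:=\kappa_s(t_j^1)>0$, continuity forces $\kappa_s^*>\alpha$ on a region containing $W_2$ once $\delta$ is small, so $\kappa^*$ increases at rate at least $\alpha$ there; after $\kappa^*$ has grown by more than $\varepsilon$ the curve has left $W_2$ and, being locally monotone, cannot return, so the filler lasts at most $\varepsilon/\alpha$ parameter time. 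This yields $|\mathcal{L}_j^*-L_j|<\varepsilon$, and also keeps the curvature estimate valid at the true partition point via $|\kappa^*(t)-\kappa^*(L_j^*)|\le M|\mathcal{L}_j^*-L_j^*|$ on the filler. This rate argument is precisely where the hypothesis $f^{(k_j)}(t_j)\neq 0$ in the definition of a partition does its work; your proposal cites Lemma \ref{epball} but never exploits the derivative to control \emph{parameter time}, which is the missing idea. (A secondary gap, which the paper itself also glosses over, is that invoking the partition property for $S_j^*$ requires the two-sided Hausdorff bound $d(S_j,S_j^*)<\delta_j$, whereas Lemma \ref{gluing1} only gives the one-sided containment $S_j^*\subset(S_j)_\delta$; you are right to note that the coverage direction needs an argument.)
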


\begin{proof}
By lemma \ref{gluing1}, we can take $\min\{\delta_j\}>\delta>0$ so that we get segments $S_j^*\subset S_*^{(i)}\cap (S_j)_\delta$. Under these conditions, we can invoke the properties of the partition to get
\begin{gather*}
|\kappa_j(0)-\kappa_j^*(0)|<\varepsilon,\\ |\kappa_j(L_j)-\kappa_j^*(L_j^*)|<\varepsilon.
\end{gather*}
We now take $0=t_0^2<t_1^2<\dots<t_n^2=L^*$ so that $\sigma_*^{(i)}(t_j^2) = \sigma_*^j(0)$, that is, we take these to be the initial points of each $S_j^*$. We are guaranteed this same ordering as it follows the orientation of $S^{(i)}$. Define $\mathcal{L}_j^*:= t_{j}^2 - t_{j-1}^2$, and let $L_j^*$ be such that
$$
\sigma_*^j([0,L_j^*]) = S_j^*.
$$
What we mean here is that our choice of the path component $S_j^*$ gives us some initial value for how long its domain should be ($L_j^*)$. What we want to show next is that we can safely extend it so that we can include the filler between $S_j^*$ and $S_{j+1}^*$ while maintaining the properties of the partition. That is, at the end of the day we want it to be safe to replace $L_j^*$ with $\mathcal{L}_j^*$. We first want to show that for any $\varepsilon>0$, we can choose $\delta>0$ so that $|\mathcal{L}_j^* - L_j|<\varepsilon$. In the case where $L_j^*\leq \mathcal{L}_j^*\leq L_j$, we have that $|\mathcal{L}_j^*- L_j| \leq |L_j^* - L_j|<\varepsilon$ choosing $\delta$ as given in the partition hypotheses. Thus we cam assume $\mathcal{L}_j^*>L_j.$\\
By lemma \ref{epball}, we can take $\delta$ such that
$$
\sigma_*^{(i)}(\mathcal{L}_j^*),\sigma_*^{(i)}(L_j^*)\in W_2\subset (s_r)_\varepsilon,
$$
as this is where $S_j^*$ ends, and $S_{j+1}^*$ begins. We now argue that there must be a lower bound for ``how long" it will take before $\sigma_*^{(i)}$ to leave $(s_r)_\varepsilon$ and not return locally. This is because our derivative being non-zero will push the curve away from this point. We make this formal in the following.\\
We again assume that the first derivative is non-zero and positive without loss of generality. Recall again that $s_r = \sigma^{(i)}(t_j^1)$, and so $2\alpha:=\kappa_s(t_j^1)> 0$. By the continuity of $\kappa_s$, we can take $\beta>0$ so that for all $t\in[t_j^1-\beta, t_j^1+\beta]$, $\kappa_s(t) -\delta >\alpha$ if $\delta<\alpha/2$. Fix such a $\beta$, $\delta'$ and take $\varepsilon>0$, so that
$$
W_2\subset (s_r)_{\epsilon} \subset (\sigma^{(i)}([t_j^1-\beta, t_j^1+\beta]))_{\delta'}.
$$
In partiular, take $\varepsilon< |\sigma^{(i)}(t_j^1) - \sigma^{(i)}(t_j^1\pm \beta)|$. Combining this with the preceding discussion, we have for $|\kappa^*(t) - \kappa(t_j^1)|<\varepsilon$, $\kappa^*_s(t)>\alpha$. Once $\kappa^*(t)>\kappa(t_j^1) + \varepsilon$, because $\kappa^*$ can only increase locally, we will have 
$$
|\sigma_*^{(i)}(t) - \sigma^{(i)}(t_j^1)|\geq |\kappa^*(t) - \kappa(t_j^1)|>\varepsilon
$$
and so $\sigma_*^{(i)}(t)\not\in W_2$. By our choice of $\delta$, we recall that $\kappa_s^*(t)>\alpha$, so that 
$$
|\kappa^*(t + L_j^*) - \kappa^*(L_j^*)|\geq \alpha t > \varepsilon
$$
for  $t > \varepsilon/\alpha$. This implies that $|\mathcal{L}_j^*-L_j^*|< \varepsilon/\alpha$ which can be made arbitrarily small by choice of $\delta.$ In summary, we can choose $\delta>0$ so that $|L_j^* -L_j|<\varepsilon/2$, and $|L_j^* - \mathcal{L}_j^*|<\varepsilon/2$. Thus 
$$
|L_j - \mathcal{L}_j^*| < \varepsilon
$$
as desired.\\
The remaining results follow easily after this. By hypothesis we have that $$M:=\max|\kappa^*_s| \leq \max|\kappa_s| + \delta,$$ and so $|\kappa^*(L_j^*) - \kappa^*(t)|\leq M|L_j^* - t| \leq  M|L_j^* - \mathcal{L}_j^*|$ can be made less than $\varepsilon$ for all $t\in [L_j^*, \mathcal{L}_j^*].$
\end{proof}
From this, we immediately get the next result.
\begin{corollary}\label{PartitionMeat}
If we can find a partition of $S^{(i)}$, then for all $\varepsilon>0$, there exists $\delta>0$ so that, for $d(S^{(i)},S_*^{(i)})<\delta$, we can parameterize $\kappa^*$ in phase to have domain $[0,L^*]$, where $|L-L^*|<\varepsilon$, and for all $t\in[0,\ell:=\min\{L,L^*\}]$, $|\kappa(t) - \kappa^*(t)|<\varepsilon$.
\end{corollary}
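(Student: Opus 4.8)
The plan is to read Corollary \ref{PartitionMeat} as a pure gluing statement, since Lemma \ref{gluing2} has already done the substantive work. That lemma hands us a matching partition $0=t_0^2<t_1^2<\dots<t_n^2=L^*$ of the comparison portrait $S_*^{(i)}$ whose breakpoints track those of the reference partition in both the parameter ($|t_j^1-t_j^2|$ small) and the curvature value ($|\kappa^*(t_j^2)-\kappa(t_j^1)|$ small). The two assertions of the corollary then split cleanly: $|L-L^*|=|t_n^1-t_n^2|<\varepsilon$ is immediate from Lemma \ref{gluing2}, so the only real task is to upgrade the control at the finitely many breakpoints into uniform control of $\kappa$ against $\kappa^*$ across the whole common domain $[0,\ell]$.

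First I would fix $\varepsilon$ and exploit that the partition has only $n$ pieces, allocating a fraction of $\varepsilon$ (say $\varepsilon/2n$) to each segment so that the finitely many errors cannot accumulate past $\varepsilon$. On segment $j$ I invoke the defining property of a partition applied to $S_j$ together with the path component $S_j^*\subset S_*^{(i)}\cap(S_j)_\delta$ produced by Lemma \ref{gluing1}: for a small enough segment threshold $\delta_j$ the in-phase parameterizations satisfy $|\sigma_j(s)-\sigma_j^*(s)|<\varepsilon/2n$ on their common domain, and in particular so do their first coordinates $\kappa_j,\kappa_j^*$. Taking $\delta:=\min_j\delta_j$ and shrinking it further so that Lemmas \ref{injectiveSig}, \ref{gluing1}, and \ref{gluing2} all apply at once secures every segment-level estimate simultaneously.

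The core step is the change of reference parameter, since the two in-phase parameterizations are anchored at different breakpoints. Globally $\kappa^*(t)=\kappa_j^*(t-t_{j-1}^2)$ on $[t_{j-1}^2,t_j^2]$ while $\kappa(t)=\kappa_j(t-t_{j-1}^1)$ on $[t_{j-1}^1,t_j^1]$, so for $t$ in the overlap of segment $j$ I would estimate
\begin{align*}
|\kappa(t)-\kappa^*(t)| &\le |\kappa_j(t-t_{j-1}^1)-\kappa_j^*(t-t_{j-1}^1)|\\
&\quad+|\kappa_j^*(t-t_{j-1}^1)-\kappa_j^*(t-t_{j-1}^2)|.
\end{align*}
The first term is exactly the segment estimate, and the second is bounded by $M\,|t_{j-1}^1-t_{j-1}^2|$ using the slope bound $M\le\max|\kappa_s|+\delta$, which Lemma \ref{gluing2} drives below $\varepsilon/2n$.

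The genuinely delicate point, and where I expect the main obstacle to sit, is the thin transition zones at each breakpoint: because $t_j^1\neq t_j^2$, a single parameter value $t$ can lie in segment $j$ for $\kappa$ but in segment $j\pm1$ for $\kappa^*$, so the naive per-segment bookkeeping above breaks down right where the segment formulas switch. I would handle these zones with Lemma \ref{epball}: near $t_j^1$ both $\sigma^{(i)}$ and $\sigma_*^{(i)}$ are trapped in a small ball $(s_r)_\varepsilon$ about the breakpoint image $s_r=\sigma^{(i)}(t_j^1)$, so the curvature values of $\kappa$ and $\kappa^*$ there are each within $\varepsilon$ of the common value $\kappa(t_j^1)$, hence within $2\varepsilon$ of one another irrespective of which segment's parameterization is nominally in force. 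Patching the interior estimates of each segment with this endpoint control over the transition zones yields $|\kappa(t)-\kappa^*(t)|<\varepsilon$ throughout $[0,\ell]$, completing the proof.
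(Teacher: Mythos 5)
Your proposal is correct and follows essentially the same route as the paper's proof: both invoke Lemma \ref{gluing2} for the matched breakpoints and $|L-L^*|<\varepsilon$, then compare $\kappa$ and $\kappa^*$ segment by segment via the partition property, absorb the shift $|t_j^1-t_j^2|$ with a Lipschitz/uniform-continuity bound (you shift $\kappa^*$ using $M\le\max|\kappa_s|+\delta$; the paper shifts $\kappa$ using uniform continuity), and control the thin transition zones near the breakpoints by closeness to the common value $\kappa(t_j^1)$. The only cosmetic differences are that the paper organizes this as an induction on $j$ with each bound kept at $\varepsilon$ (no error accumulation actually occurs, so your $\varepsilon/2n$ allocation is safe but unnecessary), and it handles the transition zones via the tail estimate at the end of Lemma \ref{gluing2} rather than via Lemma \ref{epball}.
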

\begin{proof}
By lemma \ref{gluing2}, we can get analogous sequences $\{t_j^1\},\{t_j^2\}$. We argue by induction on $j$.\\
For $t\in [0, \min\{t_1^1,t_1^2\}]$, this is true by hypothesis. Assume without loss of generality that $t_1^1<t_1^2$. We have already shown that for all $\varepsilon>0$ we can take $\delta>0$ so that for all $t\in [t_1^1, t_1^2]$, $|\kappa^*(t) - \kappa(t_1^1)|<\varepsilon/2$, and $\alpha:=|t_1^1 - t_1^2|$ can be made arbitrarily small. $\kappa$ is continuous on a compact interval so it is uniformly continuous. Thus we can take $\alpha>0$ so that $|\kappa(t_1) - \kappa(t_2)|<\varepsilon/2$ for all $|t_1-t_2|<\alpha$. Thus $|\kappa^*(t)- \kappa(t)|<\varepsilon$ for all $t\in[t_1^1, t_1^2]$ as desired.\\
Now, assume the inductive hypothesis for $j$. We then have that $|\kappa^*(t)-\kappa(t)|\leq \varepsilon/2$ for all $t\in [0,\max\{t_j^1,t_j^2\}]$, and we can pick $\delta>0$ so that $|t_j^1 - t_j^2| <\alpha$ as described before. Assume without loss of generality that $t_j^1<t_j^2<t_{j+1}^1 < t_{j+1}^2$ for convenience. By hypothesis, for
$
t\in[t_j^2, t_{j+1}^1],
$
\begin{gather*}
    |\kappa^*(t) - \kappa(t)| = |\kappa^*(t) - \kappa(t - |t_j^1 - t_j^2|)\\ + \kappa(t - |t_j^1 - t_j^2|) -\kappa(t)|\\
    \leq |\kappa^*(t) - \kappa(t - |t_j^1 - t_j^2|)|\\
    + |\kappa(t - |t_j^1 - t_j^2|) -\kappa(t)|\\
    < \varepsilon/2 + \varepsilon/2 = \varepsilon.
\end{gather*}
\end{proof}
Thus if we can find a nice criteria for when we can find a partition, then we will be able to make a global statement on the relationship between the signature and the equivalence class of curves it corresponds to.

\section{Uniqueness Criteria}

We show the following so that we can extend theorem \ref{MainTheoremOpenCurves}, to the finite order vertex case.
\begin{lemma}\label{vertexSpecial}
Let $S^{(i)}$ be an injective phase portrait such that, for some $k\leq i,$ $\kappa_s^{(k)}(t)\neq 0$ for any $t$, and let $S_*^{(i)}$ be any other phase portrait. For all $\varepsilon>0$, there exists $\delta>0$ such that $d(S^{(i)},S_*^{(i)})<\delta$ implies $|\sigma^{(k)}(t) - \sigma_*^{(k)}(t)| < \varepsilon,\ \forall t\in [0, \ell:=\min\{L^*,L\}]$, $|L-L^*|<\varepsilon.$
\end{lemma}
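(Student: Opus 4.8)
The plan is to reduce this statement to Theorem~\ref{MainTheoremOpenCurves} by a change of perspective: rather than viewing the signature $S^{(i)}$ through its first-order phase structure, I would project onto the coordinates $(\kappa^{(k-1)}, \kappa^{(k)})$, which themselves form a first-order phase portrait since $\kappa^{(k)}$ is the arc-length derivative of $\kappa^{(k-1)}$. By hypothesis $\kappa^{(k)}_s \neq 0$ on this projected portrait, so the nonvanishing-derivative condition of Theorem~\ref{MainTheoremOpenCurves} is met for the function $f := \kappa^{(k-1)}$. The first step, then, is to show that closeness of the full signatures $d(S^{(i)}, S_*^{(i)}) < \delta$ implies closeness of the projected first-order signatures. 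This direction is routine: projection onto a pair of coordinates is $1$-Lipschitz with respect to the Euclidean (hence Hausdorff) metric, so the projected portraits are within $\delta$ of each other whenever the full ones are.

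Next I would apply Theorem~\ref{MainTheoremOpenCurves} to the projected signatures. This yields, for the chosen $\varepsilon$, a $\delta>0$ so that the in-phase reparameterizations of the projected portraits satisfy $|\kappa^{(k-1)}(t) - (\kappa^*)^{(k-1)}(t)| < \varepsilon$ on the common domain $[0,\ell]$, together with $|L - L^*| < \varepsilon$. Crucially, the theorem as stated provides closeness of the \emph{defining functions} $f$ and $g$ (equivalently the $u$-coordinates) on the common domain, and the length control $|L - L^*| < \varepsilon$, which is exactly the conclusion we seek at order $k-1$. The arc-length parameter that realizes the in-phase condition for the pair $(\kappa^{(k-1)}, \kappa^{(k)})$ is the same $s$ that parameterizes the original signature, since both are in phase with respect to the common arc length; this is the point that lets me transfer the reparameterization back to $S^{(i)}$ without introducing a mismatch.

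From the bound on $\kappa^{(k-1)}$ and its derivative $\kappa^{(k)}$, I would then bootstrap upward and downward through the remaining coordinates. Downward (to orders $<k$), one integrates: since $|\kappa^{(k-1)} - (\kappa^*)^{(k-1)}|$ is small on $[0,\ell]$ and the endpoints agree closely (from the Hausdorff hypothesis applied at the initial point, or by matching starting values as in Lemma~\ref{closeCurva}), integration in $s$ contracts the error up to a factor of $\ell \le L$, giving smallness of $|\kappa^{(j)} - (\kappa^*)^{(j)}|$ for each $j < k-1$ by iterated application of an estimate like that in Lemma~\ref{wiggleRoom}. Upward (to orders $>k$), the higher coordinates $\kappa^{(k+1)}, \dots, \kappa^{(i)}$ are not controlled by differentiation of a small quantity directly, so here I would instead lean on the Hausdorff proximity of the full portraits together with the in-phase parameterization already fixed: once the parameter $s$ is matched via the order-$k$ pair, the remaining coordinates of $\sigma^{(i)}_*(t)$ lie within the $\delta$-neighborhood of $\sigma^{(i)}(s)$, and the matching $|s - t| < \varepsilon$ translates this into the desired $|\sigma^{(k)}(t) - \sigma_*^{(k)}(t)| < \varepsilon$ after shrinking $\delta$ and invoking uniform continuity of the coordinate functions on the compact domain.

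The main obstacle I anticipate is the upward direction: controlling the higher-order coordinates $\kappa^{(j)}$ for $j > k$. Differentiation does not preserve smallness, so there is no purely integral argument, and the only leverage is that both portraits inhabit a common small neighborhood while sharing an orientation. The delicate point is ensuring that the in-phase reparameterization selected by the order-$k$ pair is genuinely compatible with the higher coordinates — that is, that a point close in the $(\kappa^{(k-1)}, \kappa^{(k)})$ plane cannot have wildly different higher derivatives while still lying in the $\delta$-tube of an injective signature. Injectivity of $S^{(i)}$ is what rules this out, since it forces the correspondence between parameters to be single-valued and continuous; I would need to make this quantitative via the compactness of the domain and a uniform-continuity / tube argument analogous to Lemmas~\ref{injectiveSig} and~\ref{epball}, which is where the bulk of the careful estimation will reside.
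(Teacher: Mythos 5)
Your core reduction is exactly the paper's own proof: project onto the pair $(\kappa^{(k-1)},\kappa^{(k)})$, observe that the hypothesis $\kappa^{(k)}\neq 0$ makes Theorem~\ref{MainTheoremOpenCurves} applicable to this first-order phase portrait (with Hausdorff closeness of the projections following from closeness of the full portraits, since coordinate projection does not increase distances), conclude $|\kappa^{(k-1)}(t)-(\kappa^*)^{(k-1)}(t)|<\varepsilon$ on the common domain together with $|L-L^*|<\varepsilon$, and then propagate downward through the orders $j<k-1$ by integration, anchoring the constants of integration with the Hausdorff-closeness of the initial points. Your downward step is precisely the paper's induction,
\begin{gather*}
|\kappa^{(j-1)}(t)-(\kappa^*)^{(j-1)}(t)| \leq |\kappa^{(j-1)}(0)-(\kappa^*)^{(j-1)}(0)| + \int_0^t |\kappa^{(j)}(s)-(\kappa^*)^{(j)}(s)|\,ds \leq \varepsilon(1+L),
\end{gather*}
and your remark that the in-phase parameter of the projected pair is the same arc-length parameter as that of the full signature is the same observation the paper relies on implicitly.

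However, the obstacle you single out as the heart of the matter --- controlling the coordinates $\kappa^{(k+1)},\dots,\kappa^{(i)}$ --- rests on a misreading of the statement. The conclusion asks for $|\sigma^{(k)}(t)-\sigma_*^{(k)}(t)|<\varepsilon$, where $\sigma^{(k)}$ is the \emph{truncated} in-phase parameterization $(\kappa,\kappa',\dots,\kappa^{(k)})$; only derivatives up to order $k$ appear, and no control of the coordinates above order $k$ is claimed. Accordingly, the paper's proof ends with the downward induction, and your entire third paragraph (the Hausdorff-plus-injectivity tube argument for orders above $k$) can be deleted. This matters beyond economy: that paragraph is also the least sound part of your proposal, since Hausdorff proximity of the full portraits together with a matched parameter does not by itself control higher derivatives --- which is presumably why the lemma is stated for $\sigma^{(k)}$ rather than $\sigma^{(i)}$ in the first place. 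Once that paragraph is removed, what remains is, in substance, the paper's proof.
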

\begin{proof}
 Fix $\varepsilon>0$. As the phase portrait determines the orientation, we can take $\delta>0$ so that $|\sigma_*^{(i)}(0) - \sigma^{(i)}(0)|<\varepsilon.$ What we now want to do is apply theorem \ref{MainTheoremOpenCurves} to the two coordinate projection $\{(\kappa_s^{(k-1)},\kappa_s^{(k)})\}$. As this set  satisfies the necessary hypotheses, we can restrict $\delta>0$ so that $|L-L^*|<\varepsilon$. Take $L^*>L$ without loss of generality, so that for all $t\in[0,L]$,
$$
|\kappa_s^{(k-1)}(t) - (\kappa^*)_s^{(k-1)}(t)|<\varepsilon.
$$
We now inductively show that this holds for all lower order derivatives. The base case has already been shown for $(k-1)$, so let us assume it holds for some $j<k-1$. We see then
\begin{gather*}
    |\kappa_s^{(j-1)}(t) - (\kappa^*)_s^{(j-1)}(t)| \\
    \leq |\kappa_s^{(j-1)}(0) - (\kappa^*)_s^{(j-1)}(0)|\\
    + \int_0^t |\kappa_s^{(j)}(s) - (\kappa^*)_s^{(j)}(s)|ds\\
    \leq \varepsilon + t\varepsilon \leq \varepsilon(1+L)
\end{gather*}
which can be made as small as desired. This completes our induction. For all $\varepsilon>0$, we can then take $\delta>0$ so that $|\kappa(t) - \kappa^*(t)|<\varepsilon$ on $[0,L]$, and $|L-L^*|<\varepsilon$.
\end{proof}
We are now prepared to prove the main result.
{\center{
\includegraphics[width=0.45\columnwidth]{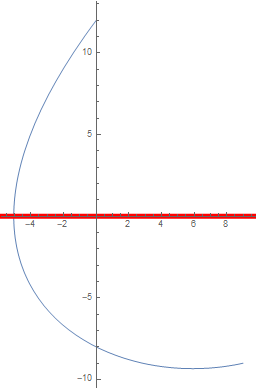}
\includegraphics[width=0.6\columnwidth]{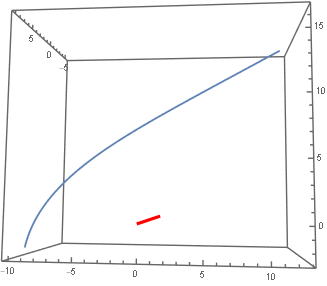}
\captionof{figure}{An example of a signature that has vertices (above) but $S^{(2)}$ (below) has no higher order vertices.}
}}

\medskip

\begin{theorem}
Let $S^{(i)}$ be a phase portrait that is injective and 
$$
S^{(i)}\cap \{(x,0,\dots, 0)\} = \emptyset.
$$
Then for any $S_*^{(i)}$, for all $\varepsilon>0$, there exists $\delta>0$ so that 
$$
d(S^{(i)}, S_*^{(i)})<\delta \Longrightarrow |\kappa^*(t)-\kappa(t)|<\varepsilon
$$
for $t\in[0,\ell:=\min\{L,L^*\},$ and $|L-L^*|<\varepsilon.$
\end{theorem}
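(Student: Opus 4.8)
The plan is to reduce the statement entirely to Corollary \ref{PartitionMeat}: once a partition of $S^{(i)}$ is exhibited, that corollary delivers exactly the desired conclusion, namely closeness of $\kappa$ on the common domain together with $|L-L^*|<\varepsilon$. So the whole task is to manufacture a partition out of the no-higher-order-vertex hypothesis.

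First I would argue locally. Since $S^{(i)}\cap\{(x,0,\dots,0)\}=\emptyset$, at every parameter $t\in[0,L]$ the point $\sigma^{(i)}(t)$ has at least one nonzero derivative coordinate, i.e.\ there is an index $k(t)\in\{1,\dots,i\}$ with $\kappa_s^{(k(t))}(t)\neq 0$. By continuity of $\kappa_s^{(k(t))}$, that same derivative stays nonzero on an open interval $U_t\ni t$. The family $\{U_t\}_{t\in[0,L]}$ is an open cover of the compact interval $[0,L]$, so it admits a finite subcover and hence a Lebesgue number $\lambda>0$.

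Next I would choose breakpoints $0=t_0<t_1<\dots<t_n=L$ of mesh less than $\lambda$, so that each segment $[t_{j-1},t_j]$ lies inside a single cover element $U_t$. On such a segment a \emph{fixed}-order derivative $\kappa_s^{(k)}$ never vanishes, and the restriction of $S^{(i)}$ remains injective (injectivity is inherited from the whole signature). Thus Lemma \ref{vertexSpecial} applies verbatim to $S_j:=\sigma^{(i)}([t_{j-1},t_j])$: it supplies, for every $\varepsilon>0$, a $\delta_j>0$ such that any nearby segment signature $S_j^*$ has in-phase parameterization $\varepsilon$-close to $\sigma_j$ on the common domain with lengths within $\varepsilon$. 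This is precisely the segment property demanded in the definition of a partition. Moreover each interior breakpoint $t_j$ lies in some $U_t$, so some $\kappa_s^{(k_j)}(t_j)\neq 0$, meeting the remaining requirement. Hence $\{t_j\}$ is a genuine partition of $S^{(i)}$, and invoking Corollary \ref{PartitionMeat} concludes the proof.

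The genuinely delicate part of the overall argument — threading the local segment data into a globally consistent reparameterization, and ruling out the nearby signature $S_*^{(i)}$ wandering between non-adjacent neighborhoods — is already discharged inside the partition machinery (Lemmas \ref{injectiveSig}, \ref{gluing1}, \ref{epball}, \ref{gluing2}). The single point that requires care \emph{here} is the Lebesgue-number step: I must guarantee that each segment sits inside a region where one fixed order of derivative, rather than a $t$-dependent one, stays nonzero, since Lemma \ref{vertexSpecial} needs a single global index $k$ per segment. That is exactly why passing to a finite subcover and its Lebesgue number, rather than working pointwise, is essential, and it is the only nontrivial obstacle in assembling the partition.
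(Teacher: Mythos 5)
Your proposal is correct and takes essentially the same approach as the paper: reduce to exhibiting a partition, use the no-higher-order-vertex hypothesis plus continuity and compactness of $[0,L]$ to obtain finitely many intervals on each of which a single fixed-order derivative is nonvanishing, verify the segment property via Lemma \ref{vertexSpecial}, and conclude with Corollary \ref{PartitionMeat}. Your Lebesgue-number step is only a cosmetic variant of the paper's choice of breakpoints, which takes the ordered endpoints of the finite subcover's intervals directly.
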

\begin{proof}
It suffices to find a partition of $S^{(i)}$, as shown in corollary \ref{PartitionMeat}. For every $t\in[0,L]$, by hypothesis it must be that
$$
\sigma^{(i)}(t) = (u_1,\dots, u_{k_t},\dots,u_i)
$$
where $u_{k_t}\neq 0,$ $k_t>1$. Thus there exists $\delta_t>0$ so that $|\kappa_s^{(k_t)}(x)|> |\kappa_S^{(k_t)}(t)/2|$ on $(t-\delta_t,t+\delta_t)\cap [0,L]$. Because this interval is compact, we can then take a finite sub-cover of these sets, which we call $\{B_n\}_{n=1}^N.$ We take the left and right endpoints of the closures of each of these intervals, and order them so that
$$
0=t_0 \leq t_1 \leq t_2 \leq \dots \leq t_{2n}=L.
$$
By our construction, there exists some $k\geq 2$ on each of the intervals $[t_j,t_{j+1}]$ so that $\kappa_s^{(k)}\neq 0$. By lemma \ref{vertexSpecial}, this satisfies the requirements of a partition, and so we are done.
\end{proof}

In summary, we have shown the following.
\begin{theorem}
Let $\kappa(t):[0,L]\to \mathbb{R},\kappa^*(t):[0,L^*]\to \mathbb{R}$ be curvature functions of their defining curves $\Gamma,\Gamma^*$. Then for all $\varepsilon>0$, there exists $\delta>0$ so that if $|\kappa(t) - \kappa^*(t)| < \delta$ for all $t\in [0,\min\{L,L^*\}]$  and $|L-L^*|<\delta$ then there exists $g\in G$ such that $d(\Gamma, g\Gamma^*)<\varepsilon$.
\end{theorem}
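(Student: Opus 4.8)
The plan is to assemble the chain of implications $(2)$–$(4)$ developed above, noting that each link uses only the two structural properties of the frame highlighted after the affine theorem and not any feature peculiar to $SE(2)$ or the affine group. Since the hypothesis already supplies close curvatures on the common domain, there is no need for a partition or a non-degeneracy condition: this statement is purely the ``back end'' that reconstructs a curve from its curvature, and it is the counterpart of Lemma \ref{uncommonDomain} for a general group $G$.

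First I would normalize by choosing $g\in G$ so that $\Gamma$ and $g\Gamma^*$ share an initial point and an initial frame at $t=0$, exactly as in Lemma \ref{closeCurva} and its affine counterpart, and set $\ell:=\min\{L,L^*\}$. On the common domain $[0,\ell]$ I would run the established chain: by the continuous dependence of the Cartan matrix on the curvature (the first frame property), shrinking $\delta$ makes $|K(t)-K^*(t)|$ as small as desired; the Picard-iteration estimate, which rests on Lemmas \ref{uniformBound1} and \ref{uniformBound2}, then forces the reconstructed frames to satisfy $|A_K(t)-A_{K^*}(t)|<\delta_4$; and integrating the tangent via the second frame property bounds $|\gamma(t)-g\gamma^*(t)|<\varepsilon/2$ for all $t\in[0,\ell]$. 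Because every estimate in this chain is uniform over the compact interval, the successive thresholds compose into a single $\delta$.

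The remaining and most delicate step is the leftover arc of the longer curve, of length $|L-L^*|<\delta$, which I would treat as in Lemma \ref{uncommonDomain}. In place of the Euclidean bound of Lemma \ref{wiggleRoom} I would use its general-group analogue $|\gamma(a)-\gamma(b)|\leq (b-a)M$, where $M$ bounds the frame; this bound is available because the frame is continuous on a compact parameter interval and hence bounded. Any point of the leftover arc then lies within $\delta M$ of the common endpoint $\gamma(\ell)$, which the common-domain estimate already places within $\varepsilon/2$ of a point of $g\Gamma^*$, so by the triangle inequality it lies within $\varepsilon$ of $g\Gamma^*$; symmetrically every point of $g\Gamma^*$ lies within $\varepsilon$ of $\Gamma$, giving $d(\Gamma,g\Gamma^*)<\varepsilon$. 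The main obstacle is the bookkeeping here: one must verify that the single element $g$ fixed to align the initial frames simultaneously controls both the common and the leftover portions, which is precisely why the initial-point normalization, rather than a best-fit alignment, is the right choice, and one must track that the frame bound $M$ used for the wiggle-room estimate is taken over the whole parameter interval of the longer curve and not merely over $[0,\ell]$.
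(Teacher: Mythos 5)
Your proposal matches the paper's own treatment: the theorem appears there only as a summary (``In summary, we have shown the following''), assembled from exactly the chain you describe --- curvature close $\Rightarrow$ Cartan matrices close $\Rightarrow$ frames close via the Picard-iteration bounds of Lemmas \ref{uniformBound1} and \ref{uniformBound2} $\Rightarrow$ curves close by integrating the tangent --- with the leftover arc of length $|L-L^*|<\delta$ handled by the frame-bounded analogue of Lemma \ref{wiggleRoom}, following the pattern of Lemma \ref{uncommonDomain}. Your normalization by a single $g$ aligning the initial point and frame, and your caveat that the frame bound $M$ must be taken over the full parameter interval, are precisely the points the paper's affine section relies on, so the proposal is correct and essentially identical in approach.
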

\begin{corollary} \label{MainTheoremClosedCurves}
Let $S^{(i)}$ be a signature that is injective and
$$
S^{(i)}\cap \{(x,0,\dots, 0)\} = \emptyset.
$$
Then for any $S_*^{(i)}$, for all $\varepsilon>0$, there exists $\delta>0$ so that 
$$
d(S^{(i)}, S_*^{(i)})<\delta \Longrightarrow d(\Gamma,g\Gamma)<\varepsilon
$$
for some $g\in G.$
\end{corollary}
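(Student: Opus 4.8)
The plan is to obtain this corollary by chaining the two immediately preceding theorems, since the injectivity and no-higher-order-vertex hypotheses on $S^{(i)}$ are precisely what the penultimate theorem (the signature-to-curvature result) consumes. That theorem already converts Hausdorff closeness of signatures into pointwise closeness of the underlying curvatures on the common domain together with closeness of the arc lengths; the summary theorem (the curvature-to-curve result) then converts closeness of curvatures and lengths into closeness of the curves up to a group element $g\in G$. So my proof amounts to composing two $\varepsilon$-$\delta$ statements and checking that the intermediate quantities (common-domain curvature error and length error) line up.

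Concretely, I would proceed as follows. First, fix $\varepsilon>0$ and feed it into the summary theorem: this yields a tolerance $\eta>0$ such that whenever $|\kappa(t)-\kappa^*(t)|<\eta$ for all $t\in[0,\min\{L,L^*\}]$ and $|L-L^*|<\eta$, there is some $g\in G$ with $d(\Gamma,g\Gamma^*)<\varepsilon$. Next, I would feed this $\eta$ into the penultimate theorem, whose hypotheses are exactly that $S^{(i)}$ is injective and disjoint from $\{(x,0,\dots,0)\}$; this produces a $\delta>0$ such that $d(S^{(i)},S_*^{(i)})<\delta$ forces both $|\kappa(t)-\kappa^*(t)|<\eta$ on $[0,\min\{L,L^*\}]$ and $|L-L^*|<\eta$. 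Composing, $d(S^{(i)},S_*^{(i)})<\delta$ delivers the curvature and length bounds at level $\eta$, which in turn give $d(\Gamma,g\Gamma^*)<\varepsilon$ for the corresponding $g$.

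The only point requiring care is that the two theorems must agree on where their conclusions and hypotheses live. Both phrase the curvature estimate on the common domain $[0,\min\{L,L^*\}]$ and pair it with the single scalar bound $|L-L^*|<\eta$, so the output of the signature-to-curvature step is exactly the input format required by the curvature-to-curve step, and no reparameterization or re-splitting of domains is needed. There is no genuine obstacle remaining at this stage: all of the real work has already been absorbed into the penultimate theorem, whose proof extracts a finite subcover of intervals on which some derivative $\kappa_s^{(k)}$ is nonzero and assembles them into a partition in the sense of Corollary~\ref{PartitionMeat}, and into Theorem~\ref{MainTheoremOpenCurves}, which drives that partition machinery. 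Accordingly I would keep this proof short, emphasizing only that the hypotheses of the corollary are verbatim those needed to run the partition argument and that the length-and-common-domain bookkeeping matches across the two chained statements.
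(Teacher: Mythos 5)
Your proposal is correct and matches the paper's intent exactly: the paper states this corollary without a written proof precisely because it is the composition of the two preceding theorems (signature-to-curvature, then curvature-to-curve), which is the chaining you carry out. Your bookkeeping check that the common-domain bound $[0,\min\{L,L^*\}]$ and the length bound $|L-L^*|$ are phrased identically in both statements is the only detail that needed verifying, and you handled it.
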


We now extend this to a uniqueness result on injective signatures first, then simple signatures. 
\begin{theorem}
Let $S^{(i)}$ be a phase portrait that is injective and 
$$
S^{(i)}\cap \{(x,0,\dots, 0)\} = \emptyset.
$$
Then there is a unique equivalence class of curves with respect to the group action $G$ that have this signature.
\end{theorem}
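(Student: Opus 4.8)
The plan is to deduce uniqueness by specializing the comparison results just established to the degenerate case in which the two signatures coincide exactly. Suppose $\Gamma_1$ and $\Gamma_2$ are both curves with signature $S^{(i)}$, and let $\sigma_1,\sigma_2$ be their in-phase parameterizations, with associated curvature functions $\kappa_1,\kappa_2$ on domains $[0,L_1],[0,L_2]$. Since $S^{(i)}$ is injective and meets no higher-order vertex $(x,0,\dots,0)$, I would apply the preceding theorem with the roles of $S^{(i)}$ and $S_*^{(i)}$ both played by $S^{(i)}$ itself. Because $d(S^{(i)},S^{(i)})=0<\delta$ for every admissible $\delta$, the hypothesis of that theorem is met for all $\varepsilon>0$ simultaneously, yielding $|\kappa_1(t)-\kappa_2(t)|<\varepsilon$ on $[0,\min\{L_1,L_2\}]$ and $|L_1-L_2|<\varepsilon$ for every $\varepsilon>0$.

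Letting $\varepsilon\to 0$ then forces $L_1=L_2=:L$ and $\kappa_1\equiv\kappa_2$ on $[0,L]$. The one point requiring care here is that the two in-phase parameterizations be aligned, that is, that they begin at the same endpoint of $S^{(i)}$ and traverse it in the same direction; this is automatic, since the phase condition $v=u'$ fixes the direction of traversal at every point of the portrait (when $v>0$ the first coordinate increases, when $v<0$ it decreases), exactly as in the orientation argument of Theorem \ref{closedCurveUnique}. Hence the two curvature functions genuinely coincide as functions on a common domain.

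It then remains to invoke reconstruction. Equal curvature functions parameterized by the group's invariant arc length determine the same Cartan matrix $K(s)$, and by Lemma \ref{PicardIterations} the frames solving $A'=KA$ agree up to the choice of initial nonsingular matrix $U$; that freedom is precisely the freedom of a single element $g\in G$, so $\Gamma_1=g\Gamma_2$ and the curves are congruent. In the Euclidean setting this is the reconstruction theorem stated at the outset, and in general it is Lemma \ref{PicardIterations} together with Lemma \ref{sigDEQ}, the latter also guaranteeing that any such $S^{(i)}$ does define a curve, so the equivalence class is nonempty. Combining existence with the uniqueness just shown gives a single equivalence class.

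The main obstacle is conceptual rather than computational: the robustness machinery only ever produces curves that are arbitrarily close, never identical, so the crux is the passage from ``for all $\varepsilon$'' to exact equality. This passage is legitimate precisely because the two signatures are literally the same set, so the comparison theorem's hypothesis $d(S,S^*)<\delta$ holds no matter how small $\delta$ is chosen, and the resulting curvature bound may therefore be driven to zero. An alternative route, should the alignment step prove delicate, would be to apply Corollary \ref{MainTheoremClosedCurves} to obtain $g_n\in G$ with $d(\Gamma_1,g_n\Gamma_2)<1/n$ and extract a convergent subsequence $g_n\to g$ from a compact subset of $G$ (using that the images $g_n\Gamma_2$ stay in a fixed bounded region, which bounds both the translation and, via the curve's nondegenerate extent, the linear part), concluding $d(\Gamma_1,g\Gamma_2)=0$ by continuity of the action.
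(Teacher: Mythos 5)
Your proposal is correct, and it isolates exactly the right crux (the passage from ``arbitrarily close for every $\varepsilon$'' to exact congruence), but it closes that gap differently from the paper. The paper's own proof stays entirely at the curve level: it applies the robustness theorem with $d(S^{(i)},S_*^{(i)})=0<\delta$ for every $\delta$, and then observes that the group element $g$ produced by the argument is the \emph{same} for every $\delta$ (it is always the one aligning initial points and initial frames), so that $d(\Gamma,g\Gamma^*)<\varepsilon$ holds for all $\varepsilon$ with a single fixed $g$, forcing $d(\Gamma,g\Gamma^*)=0$. You instead let $\varepsilon\to 0$ at the curvature level, concluding $L_1=L_2$ and $\kappa_1\equiv\kappa_2$ exactly, and then invoke the reconstruction results (the Euclidean reconstruction theorem of the introduction, and Lemma \ref{PicardIterations} for a general group) to convert equal curvature functions into congruence. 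Each route has a virtue: the paper's is shorter, but it silently relies on the $\delta$-independence of $g$, a fact that is true of the construction but is not part of the statement of the robustness theorem being cited; your route does not need that fact at all, at the price of re-invoking the reconstruction machinery, and your orientation remark (the phase condition $v=u'$ forces a common starting endpoint and direction of traversal) is a detail the paper leaves implicit. Your fallback argument via a convergent subsequence $g_n\to g$ is the one place to be careful: for the Euclidean group compactness of the rotation part makes it work, but for the full affine group the claimed boundedness of the linear parts needs an actual argument (e.g.\ that $\Gamma_2$ contains three affinely independent points, and that a singular limit would flatten $\Gamma_2$ onto a line, contradicting nonvanishing curvature); since this is only your alternative route, it does not affect the correctness of the main proof.
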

\begin{proof}
If any two curves have the same signature, by theorem \ref{MainTheoremOpenCurves}, then we see that 
$$
d(S^{(i)},S_*^{(i)})<\delta,\quad \forall \delta>0.
$$
Since we choose the same $g$ in our argument regardless of $\delta$, we see then that there exists $g\in G$ so that
$$
d(\Gamma, g\Gamma^*)<\varepsilon,\quad \forall \varepsilon >0
$$
thus $d(\Gamma, g\Gamma^*)=0\ \Longrightarrow \Gamma = g\Gamma^*$ so they are in the same equivalence class.
\end{proof}

For the next step we will need our proof to additionally hold for open curves/signatures, which we quickly verify.
\begin{lemma}
Let $S^{(i)}_0$ be a pre-compact signature that is injective and can be extended to a compact signature $S^{(i)}$ by taking its closure, and 
$$
d(S^{(i)}, \{(x,0,\dots, 0)\})>0.
$$
Then there is a unique equivalence class of curves with respect to the group action $G$ that have this signature.
\end{lemma}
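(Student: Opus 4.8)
The plan is to re-run the compact uniqueness theorem established immediately above essentially verbatim; the only gap is that its engine — the robustness estimate of Corollary~\ref{MainTheoremClosedCurves} and the partition machinery behind it — was built for signatures carried on a compact parameter interval. So the first task is to observe that this machinery extends to a pre-compact injective signature by working on its compact closure $S^{(i)}$. Compactness of the domain was used in exactly three places: the finite subcover that produces a partition in the main vertex theorem, the uniform continuity of $\kappa$ invoked in Corollary~\ref{PartitionMeat}, and the finite Lipschitz constant $M=\max|\kappa_s|$ appearing in the gluing estimates (Lemma~\ref{gluing2}). Each survives on $S^{(i)}$: the closure is compact by hypothesis, the in-phase parameterization extends continuously to it so that every coordinate $\kappa_s^{(k)}$ attains its maximum, and the hypothesis $d(S^{(i)},\{(x,0,\dots,0)\})>0$ forces some derivative $\kappa_s^{(k)}$ with $k\ge 1$ to be \emph{uniformly} bounded away from zero on all of $S^{(i)}$. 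Hence a partition exists and Corollary~\ref{MainTheoremClosedCurves} is available for $S_0^{(i)}$.

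With robustness in hand the uniqueness argument is then immediate and parallels the compact case. Suppose $\Gamma$ and $\Gamma^*$ both have signature $S_0^{(i)}$; then trivially $d(S_0^{(i)},S_{0\ast}^{(i)})=0<\delta$ for every $\delta>0$, so the extended Corollary~\ref{MainTheoremClosedCurves} produces, for each $\varepsilon>0$, an element $g\in G$ with $d(\Gamma,g\Gamma^*)<\varepsilon$. As in the theorem above, this $g$ is pinned down by aligning a single interior base point $\sigma^{(i)}(t_*)$ and its tangent, data that do not depend on $\delta$, so one and the same $g$ works for every $\varepsilon$. Letting $\varepsilon\to 0$ gives $d(\Gamma,g\Gamma^*)=0$, whence $\Gamma=g\Gamma^*$ and the two curves lie in a single equivalence class.

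The step I expect to demand the most care is verifying that the partition and robustness chain really do survive at the two endpoints adjoined in passing to the closure, where a spurious coincidence or loss of injectivity could in principle appear. I would dispatch this by exhausting the open parameter interval by compacts $[a_n,b_n]$ all containing the base parameter $t_*$, building the partition on the compact closure, and noting that the uniform gap $d(S^{(i)},\{(x,0,\dots,0)\})>0$ supplies a single $\delta$-threshold valid on every sub-signature $\sigma^{(i)}([a_n,b_n])$. Consequently nothing degenerates as $[a_n,b_n]$ approaches the endpoints, the common base point keeps the congruence $g$ consistent across all the pieces, and $\Gamma=g\Gamma^*$ propagates from a dense subarc to the whole open curve and, by continuity of the two parameterizations, to its closure as well.
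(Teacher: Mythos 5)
Your proposal takes a genuinely different route from the paper, and the route it takes has a real gap. The paper's own proof abandons the robustness machinery here entirely: it notes that for open signatures it is \emph{easier} to argue by uniqueness of solutions of ODEs. Concretely, it takes a partition as before and applies Lemma~\ref{sigDEQ} on each piece to recover $\kappa^{(k-1)}$ as the unique solution of the in-phase differential equation, with initial data read off from the position on the signature, and then glues these local solutions into a global one. That argument is purely local, so it never needs the compactness-dependent estimates (finite subcovers, uniform continuity, Lipschitz bounds) that your first paragraph is devoted to rescuing.

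The gap in your rescue is injectivity of the closure. Every ingredient you want to reuse --- Lemma~\ref{injectiveSig}, the gluing lemmas, and hence Corollary~\ref{MainTheoremClosedCurves} and the compact uniqueness theorem --- assumes the signature is injective, but the hypotheses here only make $S^{(i)}_0$ injective: its closure may identify the two adjoined endpoints, or glue an endpoint onto an interior point. This is not a pathological corner case; it is the intended application. The lemma exists precisely to feed the following theorem on \emph{simple} signatures, where one deletes a point from a simple closed signature and the closure of what remains is the original non-injective loop. Your fallback exhaustion by compacts $[a_n,b_n]$ does not close the hole, for two reasons. To apply the compact uniqueness theorem to the piece $S_n:=\sigma^{(i)}([a_n,b_n])$ you must first know that $\Gamma^*$ possesses a sub-arc whose signature is exactly $S_n$; producing that parameter correspondence between $\Gamma^*$ and positions along $S^{(i)}_0$ is essentially the statement being proved, so the argument is circular. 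And the alternative of comparing $S_n$ against the full portrait $S^{(i)}_0$ via the robustness theorem can never get off the ground: that theorem concludes $|L-L^*|<\varepsilon$, while the domains of $S_n$ and $S^{(i)}_0$ differ by the fixed length of the tails, so for small $\varepsilon$ its hypothesis $d(S_n,S^{(i)}_0)<\delta_n$ is simply unsatisfiable no matter how large $n$ is. A smaller, repairable misstep: the condition $d(S^{(i)},\{(x,0,\dots,0)\})>0$ does \emph{not} force a single index $k$ with $\kappa_s^{(k)}$ uniformly nonvanishing on all of $S^{(i)}$; it only bounds $\max_k |\kappa_s^{(k)}|$ below uniformly, with the witnessing index varying from point to point, which is exactly why the paper's vertex theorem needs a covering argument rather than one global $k$. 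If you want a correct proof along your lines, you end up needing the uniqueness of the in-phase parameterization of a fixed injective portrait, and that is precisely the ODE argument the paper uses directly.
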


\begin{proof}
To prove uniqueness in this case, it is easier instead to rely on the uniqueness of solutions to ODEs. Indeed, taking a partition as before, we can apply lemma \ref{sigDEQ} to solve for the function $\kappa^{(k-1)}$ explicitly. Because the initial values are determined by the location on the signature, we can uniquely solve for $\kappa$ on each of these intervals in the partition, and then use them together to define a global solution.
\end{proof}

\begin{theorem}
Let $S^{(i)}$ be a phase portrait that is simple and 
$$
S^{(i)}\cap \{(x,0,\dots, 0)\} = \emptyset.
$$
Then there is a unique equivalence class of curves with respect to the group action $G$ that have this signature.
\end{theorem}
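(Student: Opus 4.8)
The plan is to bootstrap from the injective case, using the fact that a \emph{simple} phase portrait fails to be injective only when it is a simple \emph{closed} curve, i.e.\ when $\sigma^{(i)}:[0,L]\to\mathbb{R}^{i+1}$ is injective on $[0,L)$ with $\sigma^{(i)}(0)=\sigma^{(i)}(L)$. If $S^{(i)}$ is already injective, the previous theorem applies verbatim, so I would immediately reduce to the closed case.

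The key device is to cut the loop at a point. Fix the base point $p:=\sigma^{(i)}(0)=\sigma^{(i)}(L)$; since $S^{(i)}$ avoids the vertex set $\{(x,0,\dots,0)\}$, so does $p$. Set $S_0^{(i)}:=S^{(i)}\setminus\{p\}=\sigma^{(i)}((0,L))$. Deleting the single point at which the two endpoints are identified makes the parameterization injective on the remaining interval, so $S_0^{(i)}$ is a precompact injective signature whose closure is the compact set $S^{(i)}$, and it still satisfies $d(S_0^{(i)},\{(x,0,\dots,0)\})>0$. The preceding lemma then applies to $S_0^{(i)}$ and furnishes a unique equivalence class of curves (under $G$) producing this open signature; concretely, the arc-length curvature $\kappa$ on $(0,L)$ is determined up to the choice of starting point on the signature.

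To finish, suppose $\Gamma_1,\Gamma_2$ are closed curves whose $i$th order signatures both equal $S^{(i)}$. I would choose arc-length parameterizations $\gamma_1,\gamma_2$ over their minimal periods, both starting at the signature point $p$, and apply the open-signature uniqueness to the arcs obtained by removing $p$. This forces the two open signatures to agree with $S_0^{(i)}$, so the curvature functions coincide on the interiors and the minimal periods match, $L_1=L_2=:L$; by continuity the agreement extends to $[0,L]$. Hence $\Gamma_1,\Gamma_2$ carry identical curvature data over a full period with a common initial frame, and the reconstruction theorem (Euclidean reconstruction, or its Cartan/Picard-iteration generalization for $G$) produces a single $g\in G$ with $\Gamma_1=g\Gamma_2$, giving a unique equivalence class.

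The step I expect to be the genuine obstacle is not any hard estimate but the closing-up and periodicity bookkeeping: I must rule out that one curve traverses the simple loop a different number of times than the other (so that its true period would be an integer multiple), and verify that the congruence established on the dense open arc $S_0^{(i)}$ is compatible with the endpoint identification at $p$. Simplicity of $S^{(i)}$ is precisely the hypothesis that forbids multiple traversals over the minimal period, so the remaining point is to check that $g$ respects $\sigma^{(i)}(0)=\sigma^{(i)}(L)$, which I would obtain by passing to the limit in the interior agreement via continuity of the reconstructed frame.
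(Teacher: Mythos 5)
Your overall architecture --- cut the simple closed signature at a non-vertex point $p$, apply the preceding pre-compact/injective lemma to $S^{(i)}\setminus\{p\}$ to pin down the curvature, then invoke the reconstruction theorem --- is essentially the route the paper intends (its one-line proof back-references an earlier argument, and the pre-compact lemma immediately before was introduced expressly ``for the next step''). However, there is a genuine error in your closing-up step: the claim that ``simplicity of $S^{(i)}$ is precisely the hypothesis that forbids multiple traversals over the minimal period'' is false. A closed curve with nontrivial symmetry necessarily traverses its signature several times per period: for an ellipse, the curvature has minimal period $L/2$ (half the perimeter), so its simple, high-order-vertex-free signature $S^{(2)}$ is traced exactly twice as the curve goes around once; in the paper's language, the index of symmetry $i_S(\Gamma)=L/\ell$ can exceed $1$ while $S^{(i)}$ is simple. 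Consequently you cannot conclude $L_1=L_2$ the way you do: removing the preimage of $p$ from $\Gamma_j$ produces $i_S(\Gamma_j)$ arcs, not one, and what your cutting argument actually yields is only that the two curvature functions agree up to phase as $\ell$-periodic functions, where $\ell$ is the period of the \emph{curvature}, not of the curve.

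The repair is to let reconstruction uniqueness, not simplicity, do this bookkeeping. Once both curves have the same $\ell$-periodic curvature $\kappa$ (after aligning starting points on the signature), uniqueness of solutions of the frame ODE (the Euclidean reconstruction theorem, or the Picard-iteration lemma for general $G$) produces a single curve $\gamma:\mathbb{R}\to\mathbb{R}^2$, up to the action of $G$, with curvature $\kappa$; then $\Gamma_1$ and $g\Gamma_2$ are both images of $\gamma$ over a period of $\gamma$, and since $\gamma$ has a unique minimal period the two curve periods coincide and $\Gamma_1=g\Gamma_2$ as sets. Equal traversal counts are thus an \emph{output} of the argument, not something the hypothesis hands you. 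That this distinction matters is visible in the paper itself: the corollary in the following subsection on closed-curve metrics must explicitly assume $i_S(\Gamma^*)=i_S(\Gamma)$, precisely because simplicity of the signature does not control the number of traversals.
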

\begin{proof}
The proof follows by the argument of theorem 2.
\end{proof}

This is in fact an equivalence. Indeed, at any such point $(x,0,\dots, 0)$, we can insert a section of constant curvature $x$ without altering smoothness. As all such curves will produce this same signature, we see that there is not a unique defining curve equivalence class.

\subsection{Metrics on Closed Curves}

For a closed curve $\gamma$, with minimal period $L$, and minimal period of the curvature $\ell$, we call its index of symmetry $i_S(\Gamma):= L/\ell$. It will be useful for us to think of our signature not as one set that is repeated after each revolution, but rather its injection into $\mathbb{R}^{i+1}$, given by
$$
S^{(i)}_p := \{(\kappa(t),\dots, \kappa^{(i)}(t), t)\}
$$
where $p:=\gamma(0).$
\begin{lemma}
Let $S^{(i)}\cap\{x,0,0,\dots,0\} = \emptyset.$
The set $\{t_n\}_{n=1}^{i_s(\Gamma)}$ forms a partition of $S^{(i)}_p$.
\end{lemma}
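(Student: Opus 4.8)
The plan is to take the equally spaced nodes $t_n := n\ell$ for $n = 0, 1, \dots, i_S(\Gamma)$ and check the two defining requirements of a partition against them. By the definition of the index of symmetry we have $t_{i_S(\Gamma)} = i_S(\Gamma)\,\ell = L$, so $0 = t_0 < t_1 < \dots < t_{i_S(\Gamma)} = L$ is a genuine increasing subdivision of the fundamental domain into $i_S(\Gamma)$ intervals, each of length one minimal curvature period $\ell$. The interior vertex requirement is then immediate: at every node $t_j$ the point $\sigma^{(i)}(t_j) = (\kappa(t_j), \kappa'(t_j), \dots, \kappa^{(i)}(t_j))$ belongs to $S^{(i)}$, which by hypothesis is disjoint from the axis $\{(x, 0, \dots, 0)\}$, so some $\kappa^{(k_j)}(t_j) \neq 0$ with $k_j \geq 1$, as the definition demands.

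The real work is the per-segment robustness condition, and here the passage to $S^{(i)}_p$ is essential. Since $\kappa$ has minimal period $\ell$, so does every $\kappa^{(k)}$, whence $\sigma^{(i)}(t + \ell) = \sigma^{(i)}(t)$ and each of the $i_S(\Gamma)$ segments projects onto the \emph{same} set $S^{(i)}$; the plain signature therefore retraces itself and, within a single period, need not even be injective. Appending the parameter coordinate $t$ cures both defects at once: on $S^{(i)}_p$ the last coordinate is strictly increasing, so the $i_S(\Gamma)$ retraced copies become genuinely distinct compact arcs $S_1, \dots, S_{i_S(\Gamma)}$, and any coincidences of $\sigma^{(i)}$ inside one period are resolved by their differing $t$-values. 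Consequently each $S_j$ is an injective arc on which the no-higher-vertex hypothesis still holds (adjoining $t$ changes no curvature derivative), so each $S_j$ falls under the injective, vertex-free robustness theorem established above (via lemma \ref{vertexSpecial} and the finite-subcover argument), which supplies exactly the $\varepsilon$--$\delta$ control of in-phase parameterizations and the length bound $|L - L^*| < \varepsilon$ required of a partition. That the distinct segments can be separated by uniform neighborhoods is then lemma \ref{injectiveSig}, and the matching of a nearby $S^*$ into path-components indexed compatibly with the $t_n$ is furnished by lemmas \ref{gluing1} and \ref{gluing2}.

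I expect the main obstacle to be exactly this separation of the retraced copies. In the plain signature all $i_S(\Gamma)$ segments coincide as sets, so lemma \ref{injectiveSig} is vacuous and the gluing argument has nothing to latch onto; it is only after injecting the monotone $t$-coordinate that the segments acquire positive mutual Hausdorff distance and the partition machinery becomes applicable. The delicate point is therefore to confirm that Hausdorff-closeness of a perturbation lifts to a consistent segment-by-segment matching along $S^{(i)}_p$, which is precisely what the disjoint-neighborhood separation of lemma \ref{injectiveSig} together with the gluing lemmas guarantees; once the indexing $S_j \leftrightarrow S_j^*$ is fixed, the remaining estimates are the routine ones already assembled.
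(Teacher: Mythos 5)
Your proposal is correct and takes essentially the same route as the paper: the paper's (one-sentence) proof likewise observes that each segment $\sigma_p^{(i)}([t_j,t_{j+1}])$ of the parameter-augmented signature meets the hypotheses of the robustness result for injective, vertex-free signatures (corollary \ref{MainTheoremClosedCurves}, resting on lemma \ref{vertexSpecial} and the partition machinery), which is exactly the per-segment condition defining a partition. Your elaboration of why the hypotheses hold --- injectivity supplied by the strictly increasing appended $t$-coordinate, and the no-higher-order-vertex condition inherited from $S^{(i)}$ --- is precisely what the paper leaves implicit.
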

\begin{proof}
This follows immediately from corollary \ref{MainTheoremClosedCurves} as each segment $\sigma_p^{(i)}([t_i, t_{i+1}])$ meets the desired hypotheses.
\end{proof}

\begin{corollary}
Let $S^{(i)}$ be a simple signature and
$$
S^{(i)}\cap \{(x,0,\dots, 0)\} = \emptyset.
$$
Then for any $S_*^{(i)}$ with $\i_s(\Gamma^*)=i_s(\Gamma)$, for all $\varepsilon>0$, there exists $\delta>0$ so that 
$$
d(S^{(i)}, S_*^{(i)})<\delta \Longrightarrow d(\Gamma,g\Gamma)<\varepsilon
$$
for some $g\in G.$
\end{corollary}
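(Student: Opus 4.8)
The plan is to reduce to the injective single-period case that is already handled, using the equality of indices of symmetry to control the total arc length. Write $i_s := i_s(\Gamma) = i_s(\Gamma^*)$, let $\ell = L/i_s$ and $\ell^* = L^*/i_s$ be the minimal periods of $\kappa$ and $\kappa^*$, and recall from the preceding lemma that $\{t_n\}_{n=1}^{i_s}$, with $t_n = n\ell$, forms a partition of the lifted signature $S^{(i)}_p$, each segment $\sigma^{(i)}_p([t_{n-1},t_n])$ being a translate of the fundamental arc $\sigma^{(i)}|_{[0,\ell]}$. Since $S^{(i)}$ is simple and satisfies $S^{(i)}\cap\{(x,0,\dots,0)\}=\emptyset$, this fundamental arc is injective and vertex-free, so the injective robustness results (Corollary \ref{MainTheoremClosedCurves}, or equivalently Lemma \ref{vertexSpecial} feeding Corollary \ref{PartitionMeat}) apply to each segment.

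First I would observe that the hypothesis $d(S^{(i)},S^{(i)}_*)<\delta$ is a statement about the signature \emph{sets}; because a closed curve traverses its simple signature loop exactly $i_s$ times, we have $\sigma^{(i)}([0,L]) = \sigma^{(i)}([0,\ell])$, so closeness of the full signatures coincides with closeness of their single-period images. The partition of $S^{(i)}_p$ furnished by the preceding lemma is exactly what makes the gluing argument of Corollary \ref{PartitionMeat} applicable, yielding an in-phase parameterization of $\kappa^*$ with $|\kappa(t)-\kappa^*(t)|<\varepsilon'$ on $[0,\min\{L,L^*\}]$ and $|L-L^*|<\varepsilon'$. Here the equal-index hypothesis is what forces $S^{(i)}_{*}$ to decompose into the same number $i_s$ of fundamental segments: one first applies the single-period injective result to obtain $|\ell-\ell^*|$ small, and then $|L-L^*| = i_s|\ell-\ell^*|$ is small because $i_s$ is a fixed integer. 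Without matching indices $\Gamma^*$ could wind a different number of times around the same loop and $L^*$ would not be controlled --- precisely the Musso--Nicolodi obstruction the hypothesis rules out.

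Having produced $|\kappa-\kappa^*|<\varepsilon'$ on the common domain together with $|L-L^*|<\varepsilon'$, I would conclude by invoking the reconstruction theorem immediately preceding this corollary: choosing $\varepsilon'$ small enough relative to $\varepsilon$ produces $g\in G$ with $d(\Gamma,g\Gamma^*)<\varepsilon$, as required.

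The step I expect to be the main obstacle is the middle one: rigorously transferring Hausdorff-closeness of the unlifted, multiply-traversed signature sets into closeness of the \emph{lifted} signatures $S^{(i)}_p$ and $S^{(i)}_{*,p}$, so that the partition of $S^{(i)}_p$ matches a genuine partition of $S^{(i)}_{*,p}$. The delicate point is that the fundamental arc is a closed loop --- injective only on the half-open period, with coinciding endpoints --- so the separation of neighborhoods in Lemma \ref{injectiveSig} must be arranged by cutting the loop at the base point $p=\gamma(0)$ and partitioning into genuinely injective sub-arcs before the earlier machinery can be applied verbatim. A secondary technicality is controlling the accumulated phase drift (at most $i_s|\ell-\ell^*|$) when extending curvature-closeness from a single period to all $i_s$ periods; since $i_s$ is finite and $\kappa$ is uniformly continuous on its compact period, this drift stays below $\varepsilon'$ once $\delta$ is taken sufficiently small.
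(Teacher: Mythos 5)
Your proposal follows essentially the same route the paper intends: the paper states this corollary with no explicit proof, relying on the immediately preceding lemma (that $\{t_n\}_{n=1}^{i_s(\Gamma)}$ partitions the lifted signature $S^{(i)}_p$) together with the gluing machinery of Corollary \ref{PartitionMeat} and the curvature-reconstruction theorem, which is exactly the chain you assemble. Your observations that the equal-index hypothesis is what pins down $|L-L^*| = i_s|\ell-\ell^*|$, and that the loop must be cut at the base point before the injectivity-based lemmas apply, supply more detail than the paper itself records.
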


{\centering \textbf{Acknowledgements}}

This work was done at the NCSU summer REU in Mathematics, generously funded by the NSA REU grant and NCSU Math Department, under the mentorship of Professor Irina Kogan and Eric Geiger. We are also thankful for discussions and ideas shared by group members Brooke Dippold and Jose Agudelo.

\end{document}